\newtheorem{Corollary}{Corollary}
\newtheorem{Lemma}{Lemma}
\newtheorem{Theorem}{Theorem}
\newtheorem{Remark}{Remark}
\newtheorem{Assumption}{Assumption}
\newcommand{\ba}{{\mathbf{a}}}
\newcommand{\e}{{\mathbf{e}}}
\newcommand{\bv}{{\mathbf{v}}}
\newcommand{\x}{{\mathbf{x}}}
\newcommand{\bA}{{\mathbf{A}}}
\newcommand{\B}{{\mathbf{B}}}
\newcommand{\R}{\mathbb{R}}
\newcommand{\I}{{\mathbf{I}}}
\newcommand{\M}{{\mathbf{M}}}
\newcommand{\bS}{{\mathbf{S}}}
\newcommand{\cU}{{\mathcal{U}}}
\newcommand{\X}{{\mathbf{X}}}
\newcommand{\Y}{{\mathbf{Y}}}
\newcommand{\Z}{{\mathbf{Z}}}
\newcommand{\bbeta}{{\boldsymbol{\beta}}}
\newcommand{\hbeta}{{\hat{\boldsymbol{\beta}}}}
\newcommand{\bmu}{{\boldsymbol{\mu}}}
\newcommand{\bomega}{{\boldsymbol{\omega}}}
\newcommand{\tomega}{{\tilde{\omega}}}
\newcommand{\hrho}{{\hat{\boldsymbol{\rho}}}}
\newcommand{\brho}{{\boldsymbol{\rho}}}
\newcommand{\boldr}{{\boldsymbol{r}}}
\newcommand{\hrhosij}{{\boldr^S_{ij}}}
\newcommand{\hsigma}{{\hat{\sigma}}}
\newcommand{\htau}{{\boldr^K}}
\newcommand{\htauij}{{\boldr^K_{ij}}}
\newcommand{\bOmega}{{\boldsymbol{\Omega}}}
\newcommand{\cOmega}{{\check{\boldsymbol{\Omega}}}}
\newcommand{\hOmega}{{\hat{\boldsymbol{\Omega}}}}
\newcommand{\tOmega}{{\tilde{\boldsymbol{\Omega}}}}
\newcommand{\bSigma}{{\boldsymbol{\Sigma}}}
\newcommand{\hSigma}{{\hat{\boldsymbol{\Sigma}}}}
\newcommand{\tSigma}{{\tilde{\boldsymbol{\Sigma}}}}
\newcommand{\signbar}{\overline{\sign}}
\newcommand{\defn}{\ensuremath{:=}}
\newcommand{\cSigma}{{\check{\boldsymbol{\Sigma}}}}
\newcommand{\supp}{\operatorname{supp}}
\newcommand{\bGamma}{{\boldsymbol{\Gamma}}}
\newcommand{\bB}{{\boldsymbol{B}}}
\newcommand{\0}{{\mathbf{0}}}
\newcommand{\1}{{\mathbbm{1}}}
\newcommand{\half}{\frac{1}{2}}
\newcommand{\median}{\operatornamewithlimits{median}}
\newcommand{\argmin}{\operatornamewithlimits{argmin}}
\newcommand{\argmax}{\operatornamewithlimits{argmax}}
\newcommand{\Corr}{{\rm Corr}}
\newcommand{\Cov}{{\rm Cov}}
\newcommand{\Var}{{\rm Var}}
\newcommand{\diag}{{\rm diag}}
\newcommand{\sign}{{\rm sign}}
\newcommand{\rank}{{\rm rank}}
\newcommand{\tr}{{\rm tr}}
\begin{document}
\title{High-dimensional robust precision matrix estimation: \\Cellwise corruption under $\epsilon$-contamination}
\author{Po-Ling Loh and Xin Lu Tan
\vspace{.3cm} \\
Department of Statistics \\
The Wharton School \\
University of Pennsylvania \\
Philadelphia, PA 19104
}
\maketitle

\begin{abstract}
We analyze the statistical consistency of robust estimators for precision matrices in high dimensions. We focus on a contamination mechanism acting cellwise on the data matrix. The estimators we analyze are formed by plugging appropriately chosen robust covariance matrix estimators into the graphical Lasso and CLIME.  Such estimators were recently proposed in the robust statistics literature, but only analyzed mathematically from the point of view of the breakdown point. This paper provides complementary high-dimensional error bounds for the precision matrix estimators that reveal the interplay between the dimensionality of the problem and the degree of contamination permitted in the observed distribution. We also show that although the graphical Lasso and CLIME estimators perform equally well from the point of view of statistical consistency, the breakdown property of the graphical Lasso is superior to that of CLIME. We discuss implications of our work for problems involving graphical model estimation when the uncontaminated data follow a multivariate normal distribution, and the goal is to estimate the support of the population-level precision matrix. Our error bounds do not make any assumptions about the the contaminating distribution and allow for a nonvanishing fraction of cellwise contamination.
\smallskip \\
\noindent{\sc Keywords:} Robust covariance estimation, cellwise contamination, Kendall's tau, Spearman's rho, median absolute deviation. \\
\end{abstract}

%%%%%%%%%%%%%%%%%%%%%%%%%%%%%%%%%%%%%%%%%%%%%%%%%%%%%%%%%%%%%%%%%%%%%%%%%%%%%%

\section{Introduction}

Covariance matrix estimation has long taken center stage in multivariate analysis~\citep{And03}. The sample covariance estimator, which originates as the maximum likelihood estimator under a multivariate normal model, is optimal in many respects: It is unbiased, consistent, efficient under various distributional assumptions, and easy to compute. Despite its many positive traits,  however, the sample covariance matrix is also highly non-robust when data are observed subject to contamination. Hence, various procedures in robust statistics have been derived to obtain a covariance matrix estimator that behaves well even in the presence of contaminated data \citep{Huber1981, HamEtal11}.

In other areas of multivariate analysis, the precision matrix $\bOmega^* := (\bSigma^*)^{-1}$ is of significant interest. Examples include computing Mahalanobis distances, linear discriminant analysis, and Gaussian graphical models. In the setting of graphical models, a random vector $\X$ is associated with an undirected graph $G = (V, E)$ that encodes the conditional independence relations between components of $\X$ \citep{Lau96}. The vertex set $V$ contains $X_1, \ldots, X_p$, while the edge set $E$ consists of pairs $(i, j)$, where $(i, j)\in E$ if $X_i$ and $X_j$ are connected by an edge. For each non-edge $(i, j)\not\in E$, the variables $X_i$ and $X_j$ are conditionally independent given all other variables $X_{\backslash\{i, j\}} := V\backslash\{X_i, X_j\}$. When $\X \sim N(\bmu, \bSigma^*)$, pairwise conditional independence holds if and only if $\bOmega^*_{ij} = 0$. Thus, recovering the support of the precision matrix is equivalent to graphical model selection. The aforementioned observations have been used for network reconstruction in many scientific fields, including genetics and neuroscience (e.g., see \cite{WehEtal06, SmiEtal11}, and the references cited therein). When the dimensionality $p$ is small compared to the number of samples $n$, a reasonable method for robust precision matrix estimation could consist of computing a robust estimate of the covariance matrix and then taking a matrix inverse.

With the recent deluge of high-dimensional data, however, a need has arisen to obtain high-dimensional analogs of classical statistical procedures that are both computable and possess rigorous theoretical guarantees. Although several methods, notably the graphical Lasso (GLasso) \citep{YuanLin2007, FriEtal08} and the method of constrained $\ell_1$-minimization for inverse matrix estimation (CLIME) \citep{Cai2011}, have been proposed for high-dimensional precision matrix estimation, robust estimation of high-dimensional precision matrices has only recently emerged in the literature. The GLasso and CLIME estimators themselves tend to perform poorly under contaminated data, since they take as input the sample covariance matrix that is sensitive to even a single outlier.

Popular classical robust covariance estimators are applicable in settings where less than half of the observation vectors are contaminated.  Such assumption is closely connected to the Tukey-Huber contamination model that underlies much of the existing robustness theory \citep{Tukey1962, Huber1964}.  In the Tukey-Huber contamination model, a mixture distribution with a dominant nominal component (such as a multivariate normal distribution) and a minority unspecified component are posited, and each observation vector is either completely clean or completely spoiled.  Classical robust covariance estimators then involve downweighting contaminated observations in order to reduce their influence. When the dimension $p$ is large, however, the fraction of perfectly observed data vectors may be rather small: If all components of an observation vector had an independent chance of being contaminated, most observation vectors would be contaminated. Thus, downweighting an entire observation would waste the information contained in the clean components of the observation vector.  This describes the setting of the \emph{cellwise} contamination model, which was developed by \cite{Alqallaf2002}. It generalizes the classical Tukey-Huber contamination model, which may be viewed as a case of \emph{rowwise} contamination of the data matrix, and is fairly realistic for applications involving measurement error in DNA microarray analysis~\citep{TroEtal01} or dropout measurements in sensor arrays~\citep{Swa00}.

On the other hand, most existing approaches for robust covariance estimation focus on affine equivariance.  These include the $M$-estimators \citep{Maronna1976}, Minimum Volume Ellipsoid (MVE) and Minimum Covariance Determinant (MCD) estimators \citep{Rousseeuw1984, Rousseeuw1985}, and the Stahel-Donoho (SD) estimator \citep{Stahel1981, Donoho1982}.  Although affine equivariance may be a desirable property under rowwise contamination, it is less appropriate in the setting of cellwise contamination, since linear combinations of observation vectors lead to a propagation of outliers \citep{Alqallaf2009}. In addition, the MVE, MCD, and SD estimators all require heavy computational effort, rendering them impractical for high-dimensional datasets.  To deal with cellwise contamination, \cite{VanAelst2014} proposed a modified SD estimator that adapts winsorization \citep{Huber1981, Alqallaf2002} and a cellwise weighting scheme.  Similar to the original SD estimator, however, computation is only feasible for small $p$.  A recent approach by \cite{Agostinelli2014} is capable of dealing with both rowwise and cellwise outliers. The procedure consists of two steps: (1) flag cellwise outliers as missing values; and (2) apply a rowwise robust method to the incomplete data.  However, computation is again infeasible in high dimensions. Other recent proposals for robust high-dimensional covariance matrix estimation include \cite{Chen2015} and \cite{HanEtal15}, but both methods treat different contamination models and are not suitable to handle data with cellwise contamination.

In contrast, relatively few approaches exist for robust high-dimensional precision matrix estimation under any form of contamination. One method is supplied by the TLasso estimator of \cite{FinegoldDrton2011}, which builds upon the GLasso and models the data as coming from the multivariate $t$-distribution, a long-tailed surrogate for the multivariate normal distribution.  The ``alternative multivariate $t$-distribution" is used to model a case where different coordinates of the distribution are obtained from the latent multivariate normal distribution using different weights. Although the TLasso demonstrates a higher degree of robustness than the GLasso under both rowwise and cellwise contamination in simulations, however, a theoretical analysis from the point of view of robust statistics has not been derived. More recently, \cite{Croux2014} and \cite{Tarr2015} propose a promising new method for high-dimensional precision matrix estimation, designed specifically for cellwise contamination. The method consists of combining a robust covariance estimator that may be computed efficiently with a suitable high-dimensional precision matrix estimation procedure. Whereas \cite{Tarr2015} focus on developing new methodology and \cite{Croux2014} analyze breakdown behavior of the precision matrix estimators, however, a rigorous high-dimensional analysis from the point of view of statistical consistency has not been conducted.

In this paper, we focus on high-dimensional robust estimation of precision matrices under the cellwise contamination model, using the estimators proposed by \cite{Croux2014} and \cite{Tarr2015}. Formally, we derive statistical error bounds in elementwise $\ell_\infty$-norm for robust precision matrix estimation procedures under an $\epsilon$-contamination model, where at most an $\epsilon$ fraction of entries in the data matrix are corrupted by outliers. Our work fuses two threads of research involving classical robust procedures and high-dimensional statistical estimation in a novel and rigorous manner. The bounds we derive match standard high-dimensional bounds for uncontaminated precision matrix estimation, up to a constant multiple of $\epsilon$. Furthermore, they are of a complementary nature to \cite{Croux2014}, since we are primarily concerned with robustness as measured from the viewpoint of statistical consistency, rather than breakdown behavior.

More generally, our results reveal an interesting interplay between bounds for statistical error under $\epsilon$-contamination and classical measures of robustness such as the influence function \citep{Hampel1974} and breakdown point \citep{DonohoHuber1983}. Estimators with bounded influence have long been favored in classical robust statistics, as the rate of change in the statistical functional associated with the estimator is controlled when the nominal distribution is contaminated by an arbitrary point mass distribution. Our results show that a variety of bounded influence estimators, including Kendall's and Spearman's correlation coefficients, give rise to (inverse) covariance estimators with statistical error rates that depend linearly on the degree of contamination; the converse relationship may be seen to hold more generally as a result of our proof arguments. On the other hand, our discussion of the breakdown point of the precision matrix estimators, building upon the analysis of \cite{Croux2014}, emphasizes the significant differences between the notions of breakdown point and statistical consistency. Whereas our analysis shows that the robust CLIME and GLasso procedures have comparable behavior from the point of view of high-dimensional statistical consistency, the CLIME estimator has a substantially smaller breakdown point than the GLasso, due to its constrained feasibility region. Rather than advocating one measure of robustness over another, our discussion emphasizes the value of considering different quantitative measures of robustness in selecting an appropriate estimator.

The remainder of the paper is organized as follows: Section~\ref{SecBackground} furnishes the mathematical background for the cellwise contamination model and the robust covariance and precision matrix estimators to be considered in the paper. Section~\ref{SecMain} presents our main theoretical contributions, providing bounds on the statistical error of the covariance and precision matrix estimators under the cellwise contamination model, as well as concrete consequences in the presence of outliers and/or missing data. Section~\ref{SecBreakdown} provides a discussion of the breakdown point for the robust GLasso and CLIME estimators. In Section~\ref{SecProofs}, we discuss the main steps of the proofs of our theorems. Section~\ref{SecSimulations} contains simulation results that are used to validate the theoretical results of the paper.  We conclude with a discussion in Section~\ref{SecDiscussion}, including some avenues for future research.

\paragraph{\textbf{Notation:}} For a vector $\ba = (a_1, \ldots, a_p)^T \in\R^p$, we denote by $\|\ba\|_1 = \sum_{i=1}^p |a_i|$ and $\|\ba\|_2 = (\sum_{i=1}^pa_i^2)^{1/2}$ the $\ell_1$-norm and $\ell_2$-norm of $\ba$, respectively.  For a matrix $\bA = (a_{ij}) \in\R^{p\times q}$, we define the elementwise $\ell_\infty$-norm $\|\bA\|_\infty = \max_{1\leq i\leq p, 1\leq j\leq q}|a_{ij}|$, the elementwise $\ell_1$-norm $\|\bA\|_1 = \sum_{i=1}^p\sum_{j=1}^q|a_{ij}|$, the Frobenius norm  $\|\bA\|_F = (\sum_{i=1}^p\sum_{j=1}^q a_{ij}^2)^{1/2}$, the spectral norm $\|\bA\|_2 = \sup_{\|x\|\leq 1}\|\bA\x\|_2$, and the matrix $\ell_1$-norm $\|\bA\|_{L_1} = \max_{1\leq j\leq q}\sum_{i=1}^p |a_{ij}|$. We use $\lambda_1(\bA) \ge \lambda_2(\bA) \ge \cdots \geq \lambda_p(\bA)$ to denote the ordered eigenvalues of $\bA$, and we write $\bA\succ 0$ (respectively, $\bA\succeq 0$) to indicate that $\bA$ is positive definite (respectively, positive semidefinite).  We write $\I$ for the identity matrix and $\0$ for the vector of all zeros (the respective dimension of which will be clear from context). The binary operation $\otimes$ denotes the tensor product.

%%%%%%%%%%%%%%%%%%%%%%%%%%%%%%%%%%%%%%%%%%%%%%%%%%%%%%%%%%%%%%%%%%%%%%%%%%%%%%

\section{Background and Problem Setup}
\label{SecBackground}

We begin with a description of the cellwise contamination model, followed by a rigorous formulation of the robust covariance and precision matrix estimators to be studied in our paper.

Following the notation of \cite{Alqallaf2002, Alqallaf2009}, we write the cellwise contamination model in the following form:
\begin{equation}
\label{cellContam}
\X_k = (\I-\B_k)\Y_k + \B_k\Z_k, \qquad \forall k = 1, \ldots, n.
\end{equation}
Here, we observe the contaminated random vector $\X_k\in\R^p$.  The unobservable random vectors $\Y_k, \Z_k$, and $\B_k$ are independent, and $\Y_k\sim G$ (a nominal distribution) and $\Z_k\sim H^*$ (an unspecified outlier generating distribution). Furthermore, $\B_k = \diag(B_{k1}, \ldots, B_{kp})$ is a diagonal matrix, where $B_{k1}, \ldots, B_{kp}$ are independent Bernoulli random variables with $P(B_{ki} = 1) = \epsilon_i$, for all $1 \le i \le p$.

When $\epsilon_1 = \cdots = \epsilon_p = \epsilon$, the probability of an observation vector having no contamination in any component is $(1-\epsilon)^p$, a quantity that decreases exponentially as the dimension increases.  This probability goes below the critical value $1/2$ for $p\geq 14$ at $\epsilon = 0.05$, and for $p\geq 69$ at $\epsilon = 0.01$. Equation~\eqref{cellContam} is a special case of a more general model, where we allow other joint distributions for $B_{k1}, \ldots, B_{kp}$. For instance, if $B_{k1}, \ldots, B_{kp}$ were completely dependent (i.e., \mbox{$P(B_{k1} = \cdots = B_{kp}) = 1$}), we would obtain the rowwise contamination model.  In that case, the probability of an observation vector being totally free of contamination would be $1-\epsilon$, which is independent of the dimension.  \cite{Alqallaf2009} also uses the terms \emph{fully independent contamination model (FICM)} and \emph{fully dependent contamination model (FDCM)} to denote the cellwise and rowwise contamination settings, in order to distinguish the pattern of contamination across rows of the data matrix.

Throughout, we will work under the cellwise contamination model~\eqref{cellContam}, and assume that $G$ is a multivariate normal distribution $N(\bmu, \bSigma^*)$.  Our goal is to estimate the matrices $\bSigma^*$ and $\bOmega^* = (\bSigma^*)^{-1}$ from the (uncontaminated) normal component.

%%%%

\subsection{Covariance Matrix Estimation}

Note that when $\epsilon = 0$ (i.e., the data are uncontaminated), we may use the classical sample covariance matrix estimator $\tSigma$, defined pairwise as\begin{equation*}
\tSigma_{ij} = \frac{1}{n-1}\sum_{k=1}^n(X_{ki} - \bar{X}_i)(X_{kj} - \bar{X}_j), \qquad \forall 1 \le i, j \le p,
\end{equation*}
where $\bar{X}_i = (1/n)\sum_{k=1}^nX_{ki}$ and $\bar{X}_j = (1/n)\sum_{k=1}^nX_{kj}$. When $n \gg p$, the sample covariance is an efficient estimator for $\bSigma^*$. However, when $\epsilon>0$, the performance of $\tSigma$ may be compromised depending on the properties of $H^*$:  Under the cellwise contamination model, for $i \neq j$, we have
\begin{align*}
\left(\bSigma^*_X\right)_{ij} & = (1-\epsilon_i)(1-\epsilon_j) \left(\bSigma^*_Y\right)_{ij} + \epsilon_i \epsilon_j \left(\bSigma^*_Z\right)_{ij} \\
& = \left(\bSigma^*_Y\right)_{ij} - (\epsilon_i + \epsilon_j - \epsilon_i \epsilon_j) \left(\bSigma^*_Y\right)_{ij} + \epsilon_i \epsilon_j \left(\bSigma^*_Z\right)_{ij}.
\end{align*}
When no restrictions are placed on the covariance $\bSigma^*_Z$ of the contaminating distribution, the elementwise deviations between $\bSigma^*_X$ and $\bSigma^*_Y$ (and consequently, also the sample covariance $\tSigma_X := \tSigma$ and $\bSigma^*_Y$) will in general behave arbitrary badly. Furthermore, note that even when $\bSigma^*_Z$ is constrained to lie in a space where the deviations between $\bSigma^*_X$ and $\bSigma^*_Y$ are suitably bounded, we would require the contaminating distribution to have properties such as sub-Gaussian tails in order to ensure consistency of the sample covariance estimator on the order of $O\left(\sqrt{\frac{\log p}{n}}\right)$. When a procedure based on covariance estimation is used to estimate the precision matrix, the errors incurred during the covariance estimation step would propagate to the next step. For instance, this issue would arise in using the CLIME or GLasso estimator. In contrast, our theory for robust covariance estimators will not require any assumptions on either $\bSigma^*_Z$ or the tail behavior of the contaminating distribution.

To deal with cellwise contamination in the high-dimensional setting, we therefore take the pairwise approach suggested by \cite{Croux2014}, where a robust covariance or correlation estimate is computed for each pair of variables. Early proposals of robust procedures are of this type \citep{Bickel1964, Puri1971}, where a coordinatewise approach is taken for robust estimation of location. In addition to having relatively low computational complexity, the pairwise approach is appealing because a high breakdown point of the pairwise estimators translates into a high breakdown point of the overall covariance matrix.
%These estimators are fast to compute, but lack affine equivariance and (sometimes) positive definiteness. The lack of positive definiteness is due to the fact that a correction is often made to ensure consistency for normal data.  Such correction then destroys the positive definiteness of the original estimator. In \cite{Alqallaf2002}, the classical sample correlation is applied on the Winsorized data to obtain a robust correlation estimator.  Rather than transforming the underlying data, \cite{Tarr2015} constructs robust pairwise covariance estimators from robust scale estimators, using the fact that $\bSigma_{ij}^* = [\Var(\alpha X_{ki}+\beta X_{kj})-\Var(\alpha X_{ki}-\beta X_{kj})]/4\alpha\beta$.  In a similar spirit, observing that $\bSigma_{ij}^* = \sigma_i\sigma_j\brho_{ij}$, where $\sigma_i = [\Var(X_{ki})]^{1/2}, \sigma_j = [\Var(X_{kj})]^{1/2}$ and $\brho_{ij} = \Corr(X_{ki}, X_{kj})$, \cite{Croux2014} uses robust scale and correlation estimators as building blocks for their robust covariance estimator.
For $1 \le i, j \le p$, we write
\begin{equation}
\label{sigmaij}
\bSigma_{ij}^* = \sigma_i\sigma_j\brho_{ij},
\end{equation} 
where $\sigma_i = [\Var(X_{ki})]^{1/2}$, $\sigma_j = [\Var(X_{kj})]^{1/2}$, and $\brho_{ij} = \Corr(X_{ki}, X_{kj})$. We will take suitable robust estimators of $\hsigma_i$, $\hsigma_j$, and $\hrho_{ij}$, to obtain the covariance matrix estimator $\hSigma$, with $(i, j)$ entry $\hSigma_{ij} = \hsigma_i\hsigma_j\hrho_{ij}$.

To estimate $\sigma_i$, we consider the median absolute deviation from the median (MAD), a robust measure of scale.  The MAD estimator was popularized by \cite{Hampel1974}, who attributes the concept to Gauss.  It has a breakdown point of $50\%$.  Let $X_{(1), i}\leq\cdots\leq X_{(n), i}$ denote the ordered values of $X_{1i}, \ldots, X_{ni}$.  The sample median $\hat{m}_i$ and the sample MAD $\hat{d}_i$ are defined, respectively, as 
\[\hat{m}_i = X_{(k^*), i}, \quad \text{and} \quad \hat{d}_i = W_{(k^*), i},\]
where $W_{ki} = |X_{ki} - \hat{m}_i|$, for all $k = 1, \ldots, n$, and $k^* = \lceil n/2\rceil$. Expressed another way,
\begin{equation}
\label{EqnSmiley}
\hat{d}_i = \median_{1\leq k\leq n}\bigg(\Big|X_{ki}-\median_{1\leq \ell\leq n}(X_{\ell i})\Big|\bigg).
\end{equation}
We then estimate $\sigma_i$ by $\hsigma_i = [\Phi^{-1}(0.75)]^{-1}\hat{d}_i$, where the constant $[\Phi^{-1}(0.75)]^{-1}$ is chosen in order to make the estimator consistent for $\sigma_i$ at normal distribution. The population-level median of a distribution with cdf $F$ is defined to be $m(F) := F^{-1}\left(0.5\right)$, where  $F^{-1}(c) = \inf\{x: F(x) \ge c\}$, for $c \in [0,1]$. Similarly, we may define the population-level MAD $d(F)$ to be the median of the distribution of $|X - m(F)|$, where $X$ has cdf $F$.

To estimate $\brho_{ij}$, we consider the classical nonparametric correlation estimators, Kendall's tau and Spearman's rho:

\paragraph{\textbf{Kendall's tau:}} This statistic is given by
\begin{equation}
\label{tau}
\boldr^K_{ij} = \frac{2}{n(n-1)}\sum_{k<\ell}\sign(X_{ki}-X_{\ell i})\sign(X_{kj}-X_{\ell j}),
\end{equation}
where $\sign(X) = 1$ if $X>0$, $\sign(X) = -1$ if $X<0$, and $\sign(0) = 0$.

\paragraph{\textbf{Spearman's rho:}} This statistic is given by
\begin{equation}
\label{rhos}
\boldr^S_{ij} = \frac{\sum_{k=1}^n[\rank(X_{ki})-(n+1)/2][\rank(X_{kj})-(n+1)/2]}{\sqrt{\sum_{k=1}^n[\rank(X_{ki})-(n+1)/2]^2\sum_{k=1}^n[\rank(X_{kj})-(n+1)/2]^2}},
\end{equation}
where $\rank(X_{ki})$ denotes the rank of $X_{ki}$ among $X_{1i}, \ldots, X_{ni}$. \\

%\paragraph{\textbf{Quadrant correlation:}} This statistic is given by
%\begin{equation*}
%r_{ij}^Q = \frac{1}{n} \sum_{i=1}^n \sign\left((X_{ij} - \med_{\ell = 1, \dots, n} X_{\ell j}) (X_{ik} - \med_{\ell = 1, \dots, n} X_{\ell k})\right).
%\end{equation*}

The population versions of the estimators are given, respectively, by
\begin{subequations}
\begin{equation}
\label{EqnRhoKpop}
\brho_{ij}^K = E[\sign(X_{1i}-X_{2i})\sign(X_{1j}-X_{2j})],
\end{equation}
\begin{equation}
\label{EqnRhoSpop}
\brho_{ij}^S = 3E[\sign(X_{1i}-X_{2i})\sign(X_{1j}-X_{3j})].
%\rho_{ij}^Q & = \text{{\red{ fill in}}}.
\end{equation}
\end{subequations}

When $\epsilon_1 = \cdots = \epsilon_p = 0$, we have $\X_k\sim N(\bmu, \bSigma^*)$; in this case, it is known that \citep{Kendall1948, Kruskal1958}
\[\brho_{ij} = \sin\left(\frac{\pi}{2}\brho_{ij}^K\right) = 2\sin\left(\frac{\pi}{6}\brho_{ij}^S\right).\]
Hence, for asymptotic consistency at normal distribution, our estimator for $\brho_{ij}$ is the transformed version of Kendall's tau and Spearman's rho, given by $\sin(\frac{\pi}{2}\boldr^K_{ij})$ and $2\sin(\frac{\pi}{6}\boldr^S_{ij})$, respectively. We then define as $\hSigma$ our robust covariance matrix estimator, with
\begin{equation}
\hSigma_{ij}^K = \hsigma_i\hsigma_j\sin\Big(\frac{\pi}{2}\boldr^K_{ij}\Big), \qquad\text{and}\qquad\hSigma_{ij}^S = 2\hsigma_i\hsigma_j\sin\Big(\frac{\pi}{6}\boldr^S_{ij}\Big).
\end{equation}

%%%%%

\subsection{Precision Matrix Estimation}

A long line of literature exists for precision matrix estimation in the high-dimensional setting. We will focus our attention on sparse precision matrix estimation; i.e., $\bOmega^*$ contains many zero entries. In this section, we review two techniques, the GLasso and CLIME, which produce a sparse precision matrix estimator based on optimizing a function of the sample covariance matrix. As proposed by \cite{Croux2014} and \cite{Tarr2015}, these methods may easily be modified to obtained robust versions, where the sample covariance matrix estimator is simply replaced by a robust covariance estimator $\hSigma$ as described in the previous section.

The graphical lasso (GLasso) estimator \citep{YuanLin2007, FriEtal08} is defined as the maximizer of the following $\ell_1$-penalized log-likelihood function:
\begin{equation*}
\tOmega = \argmin_{\bOmega\succ 0} \big\{\tr(\tSigma\bOmega) - \log\det(\bOmega) + \lambda\|\bOmega\|_1\big\}.
\end{equation*}
Here, $\lambda>0$ is a tuning parameter that controls the sparsity of the resulting precision matrix estimator.

In this paper, we replace the sample covariance matrix $\tSigma$ by the robust alternative $\hSigma$, and consider a variant where only the off-diagonal entries of the estimator are penalized:
\begin{equation}
\label{GLasso}
\hOmega = \argmin_{\bOmega\succ 0} \big\{\tr(\hSigma\bOmega) - \log\det(\bOmega) + \lambda\|\bOmega\|_{1, \text{off}}\big\}.
\end{equation}
Note that although the program~\eqref{GLasso} is convex for any choice of $\hSigma \in \mathbb{R}^{p \times p}$, several state-of-the-art algorithms for optimizing the GLasso require the matrix $\hSigma$ to be positive semidefinite~\citep{FriEtal08, Zhao2014, HsiEtal11}. We will first derive statistical theory for the robust GLasso without a positive semidefinite projection step, and then discuss properties of the projected version in Section~\ref{SecBreakdown}.

A popular alternative to the GLasso is the method of constrained $\ell_1$-minimization for inverse matrix estimation (CLIME) proposed in \cite{Cai2011}.  The CLIME routine solves the following convex optimization problem by linear programming:
\[\tOmega = \argmin_{\bOmega\in\R^{p\times p}} \|\bOmega\|_1 \qquad\text{subject to}\qquad \|\tSigma\bOmega - \I\|_\infty\leq\lambda.\]
Note that here, no symmetry condition is imposed on $\bOmega$, and the solution is not symmetric in general. If a symmetric precision matrix estimate is desired, we may perform a post-symmetrization step on $\tOmega= (\tomega_{ij}^1)$ to obtain the symmetric matrix $\tOmega_{\text{sym}}$, defined by
\begin{align}
\tOmega_{\text{sym}} &= (\tomega_{ij}), \qquad\text{where} \nonumber\\
\tomega_{ij} &= \tomega_{ji} = \tomega_{ij}^1\1(|\tomega_{ij}^1|\leq |\tomega_{ji}^1|) + \tomega_{ji}^1\1(|\tomega_{ij}^1| > |\tomega_{ji}^1|). \label{postSymm}
\end{align}
In other words, between $\tomega_{ij}^1$ and $\tomega_{ji}^1$, we pick the entry with smaller magnitude.  Similar to the GLasso case, we will robustify the CLIME estimator by solving 
\begin{equation}
\label{CLIME}
\hOmega = \argmin_{\bOmega\in\R^{p\times p}} \|\bOmega\|_1 \qquad\text{subject to}\qquad \|\hSigma\bOmega - \I\|_\infty\leq\lambda,
\end{equation}
and then apply the post-symmetrization step~\eqref{postSymm} to $\hOmega$ to obtain the final robust CLIME estimator $\hOmega_{\text{sym}}$.

%%%%%%%%%%%%%%%%%%%%%%%%%%%%%%%%%%%%%%%%%%%%%%%%%%%%%%%%%%%%%%%%%%%%%%%%%%%%%%

\section{Main Results and Consequences}
\label{SecMain}

In this section, we provide rigorous statements of the main results of the paper. We begin by deriving bounds for robust covariance matrix estimation, which are used to obtain bounds on the error incurred by the precision matrix estimator. Note, however, that the statistical error bounds presented in Section~\ref{SecCovEst} are of independent interest, and we believe they are the first bounds appearing in the literature that quantify the robustness of covariance matrix estimators under a cellwise contamination model.

\subsection{Covariance Matrix Estimation}
\label{SecCovEst}

Throughout this section, we will assume that the standard deviations of the uncontaminated distributions are bounded as follows:
\begin{equation}
\label{EqnSigmaBd}
0<\min_{1\leq i\leq p}\sigma_i \leq \max_{1\leq i\leq p}\sigma_i \leq M_\sigma.
\end{equation}
We also define the expression
\begin{equation*}
c(\sigma_i) = \frac{15}{64\sqrt{2\pi} \sigma_i} \exp\left(- \frac{(1.1\sigma_i + 0.5)^2}{2\sigma_i^2}\right), \qquad \forall 1 \le i \le p.
\end{equation*}

Our first theorem provides a bound on the statistical error of the robust covariance estimator $\hSigma^K$ based on Kendall's tau correlations. Note that our result does \emph{not} involve any assumptions on the nature of the contaminating distribution $H$. Thus, the distribution $H$ may contain point masses, and we do not require a probability density function of $H$ to even exist.

\begin{Theorem}
\label{ThmRobustCovK}
Under the cellwise contamination model \eqref{cellContam}, suppose inequality~\eqref{EqnSigmaBd} is satisfied,
and $\epsilon = \max_{1\leq i\leq p}\epsilon_i \le 0.02$. Let $C > \pi\sqrt{2}$ and $C' > \frac{1}{\Phi^{-1}(0.75) \min_{1 \le i \le p} c(\sigma_i) \sqrt{2}}$, and suppose
\begin{equation}
\label{EqnContamScaling}
\max\left\{C \sqrt{\frac{\log p}{n}} + 26\pi \epsilon, \; C' \sqrt{\frac{\log p}{n}} + 7.2 M_\sigma \epsilon\right\} \le 1,
\end{equation}
and $\Phi^{-1}(0.75) C' \sqrt{\frac{\log p}{n}} < 1$. Then with probability at least
\begin{equation*}
1 - 2p^{-\left(\frac{C^2}{\pi^2}-2\right)} - 6p^{-\{2[\Phi^{-1}(0.75)]^2C'^2\min_{1\leq i\leq p}c^2(\sigma_i)-1\}},
\end{equation*}
the robust covariance estimator satisfies
\begin{equation}
\label{EqnCovKendall}
\left\|\hSigma^K - \bSigma^*\right\|_{\infty} \le \left(C(M_\sigma^2 + M_\sigma + 1) + C' (2M_\sigma + 1)\right) \sqrt{\frac{\log p}{n}} + \left(97 M_\sigma^2 + 89M_\sigma + 82\right) \epsilon.
\end{equation}
\end{Theorem}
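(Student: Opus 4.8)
The plan is to bound $\|\hSigma^K - \bSigma^*\|_\infty$ one entry at a time and then take a union bound over the $\binom{p}{2}$ pairs. Fix $i\neq j$, let $\tau^Y_{ij}$ be the population Kendall's tau of the clean distribution $N(\bmu,\bSigma^*)$ (so that $\sin(\tfrac{\pi}{2}\tau^Y_{ij})=\brho_{ij}$ and $\bSigma^*_{ij}=\sigma_i\sigma_j\sin(\tfrac{\pi}{2}\tau^Y_{ij})$), and write $\tau^X_{ij}:=\brho_{ij}^K$ for the contaminated population version from \eqref{EqnRhoKpop}. Adding and subtracting $\sigma_i\sigma_j\sin(\tfrac{\pi}{2}\boldr^K_{ij})$ gives
\begin{equation*}
\hSigma^K_{ij} - \bSigma^*_{ij} = \big(\hsigma_i\hsigma_j - \sigma_i\sigma_j\big)\sin\!\Big(\tfrac{\pi}{2}\boldr^K_{ij}\Big) + \sigma_i\sigma_j\Big[\sin\!\Big(\tfrac{\pi}{2}\boldr^K_{ij}\Big) - \sin\!\Big(\tfrac{\pi}{2}\tau^Y_{ij}\Big)\Big].
\end{equation*}
Using $|\sin|\leq 1$, the $1$-Lipschitzness of $\sin$, and \eqref{EqnSigmaBd}, the absolute value of the right-hand side is at most $|\hsigma_i\hsigma_j-\sigma_i\sigma_j| + M_\sigma^2\cdot\tfrac{\pi}{2}|\boldr^K_{ij}-\tau^Y_{ij}|$. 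It therefore suffices to control the scale error $|\hsigma_i-\sigma_i|$ and the correlation error $|\boldr^K_{ij}-\tau^Y_{ij}|$, each of which I will split into a stochastic (concentration) part of order $\sqrt{\log p/n}$ and a deterministic contamination-bias part of order $\epsilon$.

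For the correlation error I would write $\boldr^K_{ij}-\tau^Y_{ij} = (\boldr^K_{ij}-\tau^X_{ij}) + (\tau^X_{ij}-\tau^Y_{ij})$. Since $\X_1,\dots,\X_n$ are i.i.d.\ under \eqref{cellContam}, $\boldr^K_{ij}$ is a degree-two U-statistic with kernel bounded by $1$ in absolute value and mean $\tau^X_{ij}$, so Hoeffding's inequality for U-statistics gives $P(|\boldr^K_{ij}-\tau^X_{ij}|>t)\leq 2\exp(-\lfloor n/2\rfloor t^2/2)$; a union bound over pairs with $t\asymp (C/\pi)\sqrt{\log p/n}$ produces the term $2p^{-(C^2/\pi^2-2)}$ (using $C>\pi\sqrt 2$). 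For the bias, the kernel $\sign(x_{1i}-x_{2i})\sign(x_{1j}-x_{2j})$ depends only on coordinates $i,j$ of $\X_1,\X_2$, and on the event $\{B_{1i}=B_{1j}=B_{2i}=B_{2j}=0\}$ — of probability at least $(1-\epsilon)^4$ — it coincides with the clean kernel, whence $|\tau^X_{ij}-\tau^Y_{ij}|\leq 2(1-(1-\epsilon)^4)\leq 8\epsilon$ with no assumption on $H^*$.

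For the scale error I would write $\hsigma_i-\sigma_i = [\Phi^{-1}(0.75)]^{-1}\big([\hat{d}_i - d(F_{X,i})] + [d(F_{X,i}) - d(F_{Y,i})]\big)$, where $F_{X,i},F_{Y,i}$ are the marginals of $X_{ki},Y_{ki}$ and $d(F_{Y,i})=\Phi^{-1}(0.75)\sigma_i$. The concentration term $|\hat{d}_i - d(F_{X,i})|$ is handled by expressing the sample MAD \eqref{EqnSmiley} as a functional of the empirical cdf, applying the DKW inequality (equivalently, Hoeffding bounds at the relevant order statistics), and inverting through a lower bound $c(\sigma_i)$ on the density of $F_{X,i}$ over an interval of radius $\approx 1.1\sigma_i+0.5$ about its median; a union bound over the $p$ coordinates together with $C'>[\Phi^{-1}(0.75)\min_i c(\sigma_i)\sqrt 2]^{-1}$ yields the term $6p^{-\{2[\Phi^{-1}(0.75)]^2 C'^2\min_i c^2(\sigma_i)-1\}}$ and a bound of order $C'\sqrt{\log p/n}$. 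For the bias $|d(F_{X,i})-d(F_{Y,i})|$, the identity $F_{X,i}=(1-\epsilon_i)F_{Y,i}+\epsilon_i F_{Z,i}$ gives $\|F_{X,i}-F_{Y,i}\|_\infty\leq\epsilon_i$ and shows $F_{X,i}$ increases at least $(1-\epsilon_i)$ times as fast as $F_{Y,i}$; since $F_{Y,i}$ is the $N(\mu_i,\sigma_i^2)$ cdf, whose quantile function has explicitly bounded derivative near the quantiles of interest, the median and MAD of $F_{X,i}$ differ from those of $F_{Y,i}$ by at most $O(\sigma_i\epsilon)=O(M_\sigma\epsilon)$ — crucially with no $c(\sigma_i)$ factor. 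The hypotheses $\Phi^{-1}(0.75)C'\sqrt{\log p/n}<1$ and \eqref{EqnContamScaling} then force $|\hsigma_i-\sigma_i|$ (hence $\hsigma_i$) to be bounded, so higher-order cross terms such as $|\hsigma_i-\sigma_i|^2$ and $\epsilon^2$ can be absorbed into constants.

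Finally, on the intersection of the above high-probability events I would combine $|\hsigma_i\hsigma_j-\sigma_i\sigma_j|\leq(2M_\sigma+|\hsigma_i-\sigma_i|)\max_i|\hsigma_i-\sigma_i|$ with $\tfrac{\pi}{2}|\boldr^K_{ij}-\tau^Y_{ij}|\leq C\sqrt{\log p/n}+O(\epsilon)$, plug in the two-part bounds for the scale and correlation errors, and collect the $\sqrt{\log p/n}$ and $\epsilon$ contributions to obtain \eqref{EqnCovKendall}; the appearance of $M_\sigma^2,M_\sigma$, and constant terms in $C(M_\sigma^2+M_\sigma+1)+C'(2M_\sigma+1)$, and of the numerical constants $97,89,82$, is careful but routine bookkeeping controlled by the scaling assumptions, $\epsilon\leq 0.02$, and $\Phi^{-1}(0.75)<1$. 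I expect the main obstacle to be the MAD analysis: simultaneously controlling the estimated median and the absolute deviation from it (the latter defined relative to the former) while tracking the sharp density lower bound $c(\sigma_i)$ in the concentration term, and separately proving the contamination bias of the population MAD is $O(M_\sigma\epsilon)$ rather than $O(\epsilon/c(\sigma_i))$ by exploiting the explicit normal quantile function. The U-statistic concentration and the Kendall bias are comparatively standard once the decomposition is in place.
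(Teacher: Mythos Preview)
Your proposal is correct and follows essentially the same architecture as the paper: split each entry into a scale-error piece (via the MAD) and a correlation-error piece (via Kendall's tau), break each of those into a concentration term of order $\sqrt{\log p/n}$ and a contamination-bias term of order $\epsilon$, then union-bound. The MAD analysis you sketch is exactly what the paper does (their Lemmas~\ref{MADterm2} and~\ref{MADterm1}).

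Two places where your route differs slightly from the paper, both in your favor. First, to combine the scale and correlation errors the paper uses the three-factor identity $ABC-abc=(A-a)(B-b)(C-c)+aC(B-b)+Ab(C-c)+Bc(A-a)$ with $A=\hsigma_i$, $B=\hsigma_j$, $C=\sin(\tfrac{\pi}{2}\boldr^K_{ij})$, which is where the $(M_\sigma^2+M_\sigma+1)$ shape comes from; your add-and-subtract $\sigma_i\sigma_j\sin(\tfrac{\pi}{2}\boldr^K_{ij})$ is simpler and actually gives a tighter prefactor ($CM_\sigma^2$ in place of $C(M_\sigma^2+M_\sigma+1)$), so it implies the stated inequality a fortiori. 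Second, for the Kendall bias the paper expands the expectation over the mixture to obtain $\rho^K_{ij}=\tfrac{2}{\pi}\sin^{-1}(\brho_{ij})+R$ with $|R|\le 12\gamma_{ij}+17\gamma_{ij}^2$ and then pushes this through the sine to get $26\pi\epsilon$; your coupling argument $|\tau^X_{ij}-\tau^Y_{ij}|\le 2(1-(1-\epsilon)^4)\le 8\epsilon$ is shorter and again gives a smaller constant.

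One small gap to patch: you fix $i\neq j$ and union-bound over $\binom{p}{2}$ pairs, but $\|\cdot\|_\infty$ includes the diagonal. The same decomposition works for $i=j$ (with $\tau^Y_{ii}=1$ and $1-\tau^X_{ii}=P(X_{1i}=X_{2i})\le\epsilon^2$), so this is trivial to include; the paper handles it explicitly in its proof of Lemma~\ref{kendalltauconsistency}.
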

The proof of Theorem~\ref{ThmRobustCovK} is provided in Section~\ref{SecThmRobustCovK}.

\begin{Remark}
Theorem~\ref{ThmRobustCovK} clearly illustrates the effect of $\epsilon$-contamination on the estimation error of the covariance matrix estimator.  Note that when $\epsilon = 0$, we recover the minimax optimal rate for covariance matrix estimation in $\ell_\infty$-norm \citep{CaiZho12}; although the estimator $\hSigma^K$ is not equal to the sample covariance estimator in the uncontaminated case, the robust covariance estimator nonetheless converges to the true covariance matrix at the optimal rate. On the other hand, cellwise contamination introduces an extra term that is linear in $\epsilon$.

Another way to interpret the bound~\eqref{EqnCovKendall} is that if the level of contamination $\epsilon$ is bounded by a constant times $\sqrt{\frac{\log p}{n}}$, then the robust covariance estimator $\hOmega^K$ will enjoy the same statistical error rate as the optimal covariance estimator in the uncontaminated case. As we will see in Theorems~\ref{ThmRobustCLIME} and~\ref{ThmRobustGLasso} below, the sample size requirements for precision matrix estimation are such that the condition $\epsilon \le C \sqrt{\frac{\log p}{n}}$ still allows for a nonvanishing fraction of contamination. Furthermore, note that although the restriction $\epsilon \le 0.02$ may seem somewhat prohibitive, the proof of Theorem~\ref{ThmRobustCovK} reveals that the specific bound on $\epsilon$ is an artifact of the proof technique, and a more careful analysis would allow for a larger degree of contamination, at the expense of slightly looser constants in the covariance estimation bound~\eqref{EqnCovKendall}, as long as $\epsilon$ is bounded by some constant in $[0,1)$.
\end{Remark}

The following theorem is an analog of Theorem~\ref{ThmRobustCovK}, derived for the robust covariance estimator $\hSigma^S$ based on Spearman's correlation coefficient. We assume that the ranks of variables between samples are distinct; note that this happens almost surely when the contaminating distribution has continuous density.

\begin{Theorem}
\label{ThmRobustCovS}
Under the cellwise contamination model~\eqref{cellContam}, suppose the variable ranks are distinct. Also suppose inequality~\eqref{EqnSigmaBd} is satisfied and $\epsilon = \max_{1\leq i\leq p}\epsilon_i \le 0.01$. Let $C > 8\pi$ and \mbox{$C' > \frac{1}{\Phi^{-1}(0.75) \min_{1 \le i \le p} c(\sigma_i) \sqrt{2}}$}, and suppose
\begin{equation*}
\max\left\{\frac{5C}{2} \sqrt{\frac{\log p}{n}} + 51 \pi \epsilon, \; C' \sqrt{\frac{\log p}{n}} + 7.2 M_\sigma \epsilon\right\} \le 1,
\end{equation*}
and the sample size satisfies $\Phi^{-1}(0.75) C' \sqrt{\frac{\log p}{n}} < 1$ and $n\geq \max\left\{15, \; \frac{16\pi^2}{C^2\log p}\right\}$. Then with probability at least 
\begin{equation*}
1 - 2p^{-\left(\frac{C^2}{32\pi^2}-2\right)} - 6p^{-\{2[\Phi^{-1}(0.75)]^2C'^2\min_{1\leq i\leq p}c^2(\sigma_i)-1\}},
\end{equation*}
the robust covariance estimator satisfies
\begin{equation}
\label{EqnCovSpearman}
\left\|\hSigma^S - \bSigma^*\right\|_\infty \le \left(\frac{5C}{2}(M_\sigma^2 + M_\sigma + 1) + C'(2M_\sigma + 1)\right) \sqrt{\frac{\log p}{n}} + \left(175 M_\sigma^2 + 168 M_\sigma + 161\right) \epsilon.
\end{equation}
\end{Theorem}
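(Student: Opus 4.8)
The plan is to mimic the decomposition used for Theorem~\ref{ThmRobustCovK}, but with Spearman's rho in place of Kendall's tau. Writing $\hSigma_{ij}^S - \bSigma_{ij}^* = \hsigma_i \hsigma_j \cdot 2\sin(\tfrac{\pi}{6}\boldr^S_{ij}) - \sigma_i\sigma_j \brho_{ij}$, I would first split this via the triangle inequality into a ``scale'' part controlling $|\hsigma_i\hsigma_j - \sigma_i\sigma_j|$ and a ``correlation'' part controlling $|2\sin(\tfrac{\pi}{6}\boldr^S_{ij}) - \brho_{ij}|$, then bound each separately. The scale part is identical to the Kendall case: it rests on the concentration of the sample MAD $\hat d_i$ around the population MAD of the contaminated distribution, the robustness (bounded bias) of the population MAD functional under $\epsilon$-contamination, and the smoothness of the normal quantile near $0.75$. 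This is where the constant $C'$, the function $c(\sigma_i)$, and the factor $[\Phi^{-1}(0.75)]^{-1}$ enter, exactly as before — so I would simply invoke the relevant lemmas from the proof of Theorem~\ref{ThmRobustCovK}, which accounts for the $C'(2M_\sigma+1)\sqrt{\log p / n}$ term and part of the $\epsilon$-linear term.

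The correlation part requires two new ingredients specific to Spearman's rho. First, a \emph{deterministic deviation bound}: I would compare $\brho_{ij}^S$ under the contaminated law $H$ to $\brho_{ij}^S$ under the nominal normal law $G$, using the coupling $\X_k = (\I - \B_k)\Y_k + \B_k\Z_k$. Since $\brho^S$ is a $3$-argument $U$-statistic functional involving $\sign(X_{1i}-X_{2i})\sign(X_{1j}-X_{3j})$, contamination of cell $(k,i)$ or $(k,j)$ flips at most finitely many sign terms, and a union bound over which of the (at most three) relevant cells are contaminated gives $|\brho_{ij}^S(H) - \brho_{ij}^S(G)| \lesssim \epsilon$ with an explicit constant; combined with Lipschitz continuity of $2\sin(\tfrac{\pi}{6}\cdot)$ (slope $\pi/3 < \pi$) and the Gaussian identity $\brho_{ij} = 2\sin(\tfrac{\pi}{6}\brho_{ij}^S(G))$, this yields the $\epsilon$-linear contribution. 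Second, a \emph{stochastic concentration bound} for $\boldr^S_{ij} - \brho_{ij}^S(H)$: Spearman's rho is (asymptotically) a $U$-statistic of degree $3$ with bounded kernel, so Hoeffding's inequality for $U$-statistics gives a tail of the form $2\exp(-c n t^2)$ with $c$ proportional to $1/3$ (hence the $5C/2$ versus $C$ scaling relative to Kendall, whose kernel has degree $2$); this is the source of the $n \ge 15$ and $n \ge 16\pi^2/(C^2\log p)$ requirements, which handle the discrepancy between the exact rank-based statistic~\eqref{rhos} and its $U$-statistic approximation (the ``$(n+1)/2$'' centering and the $1/n$ versus $1/(n-1)$ corrections are lower-order and absorbed once $n$ is not too small). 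Taking $t \asymp C\sqrt{\log p/n}$ and a union bound over all $\binom{p}{2}$ pairs produces the $p^{-(C^2/(32\pi^2)-2)}$ term in the probability.

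Finally I would combine the pieces: multiply the correlation bound by $\hsigma_i\hsigma_j \le (M_\sigma + O(\sqrt{\log p/n}) + O(\epsilon))^2$, add the scale bound multiplied by $|2\sin(\tfrac{\pi}{6}\boldr^S_{ij})| \le 1$, collect the $\sqrt{\log p/n}$ terms into the stated $\big(\tfrac{5C}{2}(M_\sigma^2+M_\sigma+1) + C'(2M_\sigma+1)\big)$ coefficient and the $\epsilon$ terms into $175 M_\sigma^2 + 168 M_\sigma + 161$, using the hypothesis that both $\tfrac{5C}{2}\sqrt{\log p/n} + 51\pi\epsilon$ and $C'\sqrt{\log p/n} + 7.2 M_\sigma \epsilon$ are at most $1$ to linearize the cross-terms. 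I expect the main obstacle to be the deterministic deviation bound for $\brho_{ij}^S$ under $\epsilon$-contamination: unlike Kendall's tau, whose population version is a clean $2$-sample expectation, Spearman's rho mixes three independent draws with different contamination patterns, so the bookkeeping of which sign terms flip — and the resulting explicit constant $51\pi$ (and ultimately $175/168/161$) — is the delicate step; the $U$-statistic concentration and the MAD analysis are essentially routine once the Kendall proof is in hand.
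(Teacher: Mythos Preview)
Your proposal is essentially correct and follows the same route as the paper: a three-factor decomposition of $\hsigma_i\hsigma_j\cdot 2\sin(\tfrac{\pi}{6}\boldr^S_{ij}) - \sigma_i\sigma_j\brho_{ij}$, the MAD concentration lemma for the scale factors, a bias lemma for $E_F(\boldr^S_{ij})$ under the contaminated mixture (your ``deterministic deviation bound''), and Hoeffding's inequality applied to the degree-$3$ $U$-statistic representation of Spearman's rho via the Hoeffding decomposition $\boldr^S_{ij} = \tfrac{n-2}{n+1}U_{ij} + \tfrac{3}{n+1}\boldr^K_{ij}$.

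One small correction on the bookkeeping: the factor $\tfrac{5C}{2}$ is \emph{not} produced by the degree-$3$ versus degree-$2$ kernel in the Hoeffding bound. The degree and kernel range are what put $C^2/(32\pi^2)$ rather than $C^2/\pi^2$ in the probability exponent. The $\tfrac{5C}{2}$ in the deviation bound is instead $C$ (from the $U$-statistic concentration) plus $\tfrac{3C}{2}$ coming from the finite-sample bias term $\tfrac{12}{n+1}$ in $E_F(\boldr^S_{ij})$, which under the assumption $n \ge 16\pi^2/(C^2\log p)$ is absorbed as $\tfrac{6\pi}{n+1} \le \tfrac{3C}{2}\sqrt{\log p/n}$. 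You have all the right ingredients; just attribute the extra $\tfrac{3C}{2}$ to the bias of the sample Spearman statistic rather than to the $U$-statistic tail.
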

The proof of Theorem~\ref{ThmRobustCovS} is provided in Section~\ref{SecThmRobustCovS}.

\begin{Remark}
The conclusion of Theorem~\ref{ThmRobustCovS} is very similar to that of Theorem~\ref{ThmRobustCovK}, except for constants and an additional requirement on the size of $n$. However, note that when $\frac{\log p}{n} = o(1)$, implying the statistical consistency of the robust covariance estimator, the requirement $n \ge \max\left\{15, \frac{16\pi^2}{C^2\log p}\right\}$ is essentially extraneous.
\end{Remark}

%\begin{Theorem}
%\label{ThmRobustCovQ}
%Under the cellwise contamination model~\eqref{cellContam}, suppose \dots. Then with probability at least \dots,
%\begin{equation*}
%\max_{1 \le i, j \le p} \left|\hsigma_i \hsigma_j \sin\left(\frac{\pi}{2} r_{ij}^Q\right) - \bSigma^*_{ij}\right| \le \cdots.
%\end{equation*}
%\end{Theorem}

Although the high-dimensional error bounds derived in Theorems~\ref{ThmRobustCovK} and~\ref{ThmRobustCovS} are substantially different from the canonical measures analyzed in the robust statistics literature, our bounds are somewhat related to the notion of the influence function of an estimator. The influence function \citep{Hampel1974}, defined at the population level, measures the infinitesimal change incurred by the statistical functional associated with an estimator when the underlying distribution is contaminated by a point mass. Thus, an estimator has a bounded influence function if the extent of the deviation in its functional representation due to contamination remains bounded, regardless of the location of the point mass. The error bounds~\eqref{EqnCovKendall} and~\eqref{EqnCovSpearman} also reveal that the extent to which the error deviation between the robust covariance estimator and the true covariance grows is bounded by a constant depending only on $M_\sigma$. The two notions do not match precisely; for instance, our theorems allow contamination by an arbitrary distribution rather than simply a point mass, and we are comparing finite-sample deviations of an estimator from $\bSigma^*$ rather than population-level deviations of a statistical functional under a contaminated distribution. However, note that by sending $n \rightarrow \infty$ in the finite-sample bounds and taking the contaminating distribution to be a point mass, we may conclude that the influence function of the robust covariance estimator is bounded. Furthermore, the arguments in our proofs (cf.\ Lemmas~\ref{exptau} and~\ref{exprho} in Appendix~\ref{AppAux}) may be used to derive the fact that the corresponding correlation estimators have a bounded influence function, the precise forms of which appear in \cite{CroDeh10}. The reverse implication, that a correlation estimator with bounded influence (together with a bounded-influence scale estimator) gives rise to high-dimensional deviation bounds of the form in inequalities~\eqref{EqnCovKendall} and~\eqref{EqnCovSpearman}, seems natural but is not immediate.

Finally, note that although Theorems~\ref{ThmRobustCovK} and~\ref{ThmRobustCovS} have been derived under the assumption that the uncontaminated data are drawn from a normal distribution, the same proof techniques may be applied to analyze settings where the uncontaminated data are drawn from a different underlying distribution, as long as the uncontaminated distribution is suitably well-behaved (e.g., has sub-Gaussian tails). Since our ultimate goal is precision matrix estimation, we have focused only on the scenario where the uncontaminated data are drawn from a Gaussian distribution, in which case the structure of the precision matrix is of great interest in the statistical community.

\paragraph{\textbf{Extensions.}} Similar high-dimensional error bounds could be derived for the robust covariance estimator based on the quadrant correlation estimator, which is given by
\begin{equation*}
r_{ij}^Q = \frac{1}{n} \sum_{k=1}^n \sign\left(X_{ki} - \median_{1\leq\ell\leq n} X_{\ell i}\right)\sign\left(X_{kj} - \median_{1\leq\ell\leq n} X_{\ell j}\right),
\end{equation*}
and also known to have bounded influence \citep{SheVil02}. However, we do not provide the full derivations here, since they follow from similar arguments to the ones used in the case of Kendall's and Spearman's correlations.

We also comment briefly on another pairwise covariance estimator appearing in the robust statistics literature. \cite{Tarr2015} and \cite{Croux2014} propose to use the following estimator based on an idea of \cite{GnaKet72}: Noting that
\begin{equation*}
\Cov(X,Y) = \frac{1}{4\alpha \beta} \left[\Var(\alpha X + \beta Y) - \Var(\alpha X - \beta Y)\right],
\end{equation*}
where $\alpha = 1/\sqrt{\Var(X)}$ and $\beta = 1/\sqrt{\Var(Y)}$, the proposal is to replace the variance estimator by a robust variance estimator (e.g., the square of the MAD estimator). However, the drawback of this estimator in comparison to the covariance estimators based on Kendall's tau and Spearman's rho is that the covariance estimator has a maximal breakdown point of 25\% under cellwise contamination, since the argument in the variance involves a sum of variables, and any robust variance estimator has a maximal breakdown point of 50\%. We remark that from the point of view of statistical consistency, a version of the Gnanadesikan-Kettenring covariance estimator may be analyzed in the same manner as the above estimators. Indeed, if we consider the covariance estimator
\begin{equation}
\label{EqnGKCov}
\frac{1}{4} \left(\hsigma_{(i,j), +}^2 - \hsigma_{(i,j),-}^2\right),
\end{equation}
where $\hsigma_{(i,j),+}$ is the (rescaled) MAD statistic computed from $\{X_{ki} + X_{kj}: 1 \le k\le n\}$, and $\hsigma_{(i,j),-}$ is analogously defined to be the MAD statistic computed from $\{X_{ki} - X_{kj}: 1 \le k\le n\}$, our derivations showing the consistency of the MAD estimator (cf. Lemmas~\ref{MADterm2} and~\ref{MADterm1}, with minor modifications) show that
\begin{equation*}
\max_{1\leq i, j\leq p} |\hsigma_{(i,j),+} - \sigma_{(i,j),+}| \le C_1\sqrt{\frac{\log p}{n}} + C_2\epsilon, \quad \text{and} \quad \max_{1\leq i, j\leq p} |\hsigma_{(i,j),-} - \sigma_{(i,j),-}| \le C_1\sqrt{\frac{\log p}{n}} + C_2\epsilon,
\end{equation*}
for data from the cellwise contamination model, where $\sigma_{(i,j),+}$ and $\sigma_{(i,j),-}$ are the population-level standard deviations of the distributions of $X_{ki} + X_{kj}$ and $X_{ki} - X_{kj}$, respectively. Thus,
\begin{equation*}
\max_{1\leq i, j\leq p} |\hsigma_{(i,j),+}^2 - \sigma_{(i,j),+}^2|, \; \max_{1\leq i, j\leq p} |\hsigma_{(i,j),-}^2 - \sigma_{(i,j),-}^2| \le C'\sqrt{\frac{\log p}{n}} + C''\epsilon,
\end{equation*}
as well, from which we may conclude that the covariance estimator~\eqref{EqnGKCov} deviates from the true covariance $\Cov(X_{ki}, X_{kj})$ by the same margin.

Finally, we remark briefly about another popular robust scale estimator known as the $Q_n$ estimator \citep{RousseeuwCroux1993}, defined as follows:
\begin{equation*}
Q_n = c\{|X_{k} - X_{\ell}|: k<\ell\}_{(k^*)},
\end{equation*}
where $c$ is a constant factor, chosen such that $Q_n$ is Fisher-consistent for the population standard deviation, and $k^* = \lceil\binom{n}{2}/4\rceil$. Since the $Q_n$ estimator is also based on quantiles, essentially the same types of arguments used to derive MAD concentration (cf.\ Appendix~\ref{AppMAD}) may be used to establish concentration bounds for the $Q_n$ estimator similar to those appearing in Lemmas~\ref{MADterm2} and~\ref{MADterm1}, up to constant factors.

\subsection{Precision Matrix Estimation}
\label{SecPrecEst}

Using the novel statistical error bounds derived in the previous section, we now provide statistical error bounds on the precision matrix estimators attained by plugging the robust covariance matrix estimates into the CLIME and GLasso. We provide explicit statements in the case of the covariance estimate based on Kendall's tau; analogous statements hold for Spearman's rho, assuming uniqueness of ranks.

We begin with the CLIME estimator. Consider the following uniformity class of matrices:
\begin{equation}
\label{EqnUniformity}
\cU(q, s_0(p), M) = \bigg\{\bOmega: \bOmega\succ 0, \|\bOmega\|_{L_1}\leq M, \max_{1\leq i\leq p}\sum_{j=1}^n|\omega_{ij}|^q \leq s_0(p)\bigg\},
\end{equation}
for $0\leq q<1$, where $\bOmega := (\omega_{ij}) = (\bomega_1, \ldots, \bomega_p)$. The following result provides an elementwise error bound on the estimation error between the CLIME output and the true precision matrix, provided the true precision matrix lies in the class~\eqref{EqnUniformity} defined above:

\begin{Theorem}
\label{ThmRobustCLIME}
Under the cellwise contamination model~\eqref{cellContam}, suppose inequality~\eqref{EqnSigmaBd} is satisfied, and $\epsilon = \max_{1 \le i \le p} \epsilon_i \le 0.02$. Let $C > \pi\sqrt{2}$ and $C' > \frac{1}{\Phi^{-1}(0.75) \min_{1 \le i \le p} c(\sigma_i) \sqrt{2}}$, and suppose inequality~\eqref{EqnContamScaling} also holds and $\Phi^{-1}(0.75) C' \sqrt{\frac{\log p}{n}} < 1$. If the regularization parameter satisfies
\begin{equation}
\label{EqnLambdaLower}
\lambda \ge M\left(C (M_\sigma^2 + M_\sigma + 1) + C' (2M_\sigma + 1)\right) \sqrt{\frac{\log p}{n}} + M \left(97 M_\sigma^2 + 89M_\sigma + 82\right) \epsilon,
\end{equation}
then with probability at least
\begin{equation*}
1 - 2p^{-\left(\frac{C^2}{\pi^2} - 2\right)} - 6 p^{-\left\{2[\Phi^{-1}(0.75)]^2 C'^2 \min_{1 \le i \le p} c^2(\sigma_i) - 1\right\}},
\end{equation*}
the CLIME estimator~\eqref{CLIME} satisfies
\begin{equation*}
\|\hOmega - \bOmega^*\|_\infty \le 4 \|\bOmega^*\|_{L_1} \lambda.
\end{equation*}
\end{Theorem}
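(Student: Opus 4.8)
The plan is to deduce the claim from the deterministic covariance bound of Theorem~\ref{ThmRobustCovK} via the CLIME analysis of \cite{Cai2011}, now carried out with $\hSigma^K$ in place of the sample covariance. Throughout I would condition on the event of Theorem~\ref{ThmRobustCovK}, which holds with the stated probability and on which $\|\hSigma^K - \bSigma^*\|_\infty \le \delta$, where $\delta$ denotes the right-hand side of~\eqref{EqnCovKendall}; note that the hypothesis~\eqref{EqnLambdaLower} is precisely $\lambda \ge M\delta$. The first step is feasibility of $\bOmega^*$ for the program~\eqref{CLIME}: since $\bSigma^*\bOmega^* = \I$, we have $\hSigma^K\bOmega^* - \I = (\hSigma^K - \bSigma^*)\bOmega^*$, and bounding the $(i,j)$ entry by $\|\hSigma^K - \bSigma^*\|_\infty$ times the $\ell_1$-norm of the $j$th column of $\bOmega^*$ gives $\|\hSigma^K\bOmega^* - \I\|_\infty \le \delta\,\|\bOmega^*\|_{L_1} \le \delta M \le \lambda$. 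Because both the objective $\|\bOmega\|_1 = \sum_{j=1}^p \|\bomega_j\|_1$ and the constraint $\|\hSigma^K\bOmega - \I\|_\infty \le \lambda$ decouple across columns, feasibility of $\bomega_j^*$ forces the $j$th column $\hat{\bomega}_j$ of the CLIME solution to satisfy $\|\hat{\bomega}_j\|_1 \le \|\bomega_j^*\|_1$; taking the maximum over $j$ yields $\|\hOmega\|_{L_1} \le \|\bOmega^*\|_{L_1} \le M$.

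The second step is deterministic error propagation. Feasibility of $\hOmega$ and of $\bOmega^*$ gives $\|\hSigma^K(\hOmega - \bOmega^*)\|_\infty \le 2\lambda$ by the triangle inequality. Writing $\bOmega^*\hSigma^K = \I + \bOmega^*(\hSigma^K - \bSigma^*)$ and multiplying through by $\hOmega - \bOmega^*$ yields
\[
\hOmega - \bOmega^* = \bOmega^*\hSigma^K(\hOmega - \bOmega^*) - \bOmega^*(\hSigma^K - \bSigma^*)(\hOmega - \bOmega^*),
\]
so that $\|\hOmega - \bOmega^*\|_\infty$ is at most $\|\bOmega^*\hSigma^K(\hOmega - \bOmega^*)\|_\infty + \|\bOmega^*(\hSigma^K - \bSigma^*)(\hOmega - \bOmega^*)\|_\infty$. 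Using that for a symmetric matrix the maximum absolute row sum equals $\|\cdot\|_{L_1}$, the first term is at most $\|\bOmega^*\|_{L_1}\,\|\hSigma^K(\hOmega - \bOmega^*)\|_\infty \le 2\lambda\|\bOmega^*\|_{L_1}$. For the second term I would work column by column: $\|(\hSigma^K - \bSigma^*)(\hat{\bomega}_j - \bomega_j^*)\|_\infty \le \|\hSigma^K - \bSigma^*\|_\infty\,\|\hat{\bomega}_j - \bomega_j^*\|_1 \le \delta\cdot 2\|\bOmega^*\|_{L_1}$ by the first step, and a further left multiplication by $\bOmega^*$ costs another factor $\|\bOmega^*\|_{L_1}$, giving the bound $2\delta\|\bOmega^*\|_{L_1}^2 \le 2\lambda\|\bOmega^*\|_{L_1}$ because $\delta\|\bOmega^*\|_{L_1} \le \delta M \le \lambda$. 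Adding the two contributions gives $\|\hOmega - \bOmega^*\|_\infty \le 4\|\bOmega^*\|_{L_1}\lambda$, as claimed. (If a symmetric estimate is wanted, the same bound carries over verbatim to $\hOmega_{\text{sym}}$ from~\eqref{postSymm}, since $\omega_{ij}^* = \omega_{ji}^*$.)

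I do not anticipate a genuine obstacle here: all of the probabilistic content has been pushed into Theorem~\ref{ThmRobustCovK}, and what remains is the deterministic CLIME bound. The only points needing care are bookkeeping of the several matrix norms ($\|\cdot\|_\infty$, $\|\cdot\|_1$, $\|\cdot\|_{L_1}$), exploiting the symmetry of $\bOmega^*$ together with the column-separability of~\eqref{CLIME}, and checking that the scaling~\eqref{EqnLambdaLower}, namely $\lambda \ge M\delta$, is exactly what absorbs the cross term $\bOmega^*(\hSigma^K - \bSigma^*)(\hOmega - \bOmega^*)$ into the leading term.
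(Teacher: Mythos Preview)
Your proposal is correct and follows essentially the same route as the paper: reduce to the deterministic event $\|\hSigma^K-\bSigma^*\|_\infty\le\delta$ from Theorem~\ref{ThmRobustCovK}, verify feasibility of $\bOmega^*$ and the columnwise bound $\|\hOmega\|_{L_1}\le\|\bOmega^*\|_{L_1}$, and then propagate to $\|\hOmega-\bOmega^*\|_\infty$. The only cosmetic difference is that the paper first bounds $\|\bSigma^*(\hOmega-\bOmega^*)\|_\infty\le 4\lambda$ and then left-multiplies by $\bOmega^*$, whereas you write $\hOmega-\bOmega^*=\bOmega^*\hSigma^K(\hOmega-\bOmega^*)-\bOmega^*(\hSigma^K-\bSigma^*)(\hOmega-\bOmega^*)$ directly; both are the same algebra and both rely on the symmetry of $\bOmega^*$ to identify its $L_1$ and $L_\infty$ operator norms.
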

The proof of Theorem~\ref{ThmRobustCLIME} is contained in Section~\ref{SecThmRobustCLIME}.

\begin{Remark}
Clearly, the optimal choice of $\lambda$ to minimize the estimation error bound in Theorem~\ref{ThmRobustCLIME} is $\lambda = C_1 \sqrt{\frac{\log p}{n}} + C_2 \epsilon$, where $C_1$ and $C_2$ are the constant prefactors appearing on the right-hand side of inequality~\eqref{EqnLambdaLower}. In this case, the estimation error bound takes the form
\begin{equation*}
\|\hOmega - \bOmega^*\|_\infty \le 4\|\bOmega^*\|_{L_1} \left(C_1 \sqrt{\frac{\log p}{n}} + C_2 \epsilon\right) \le 4M \left(C_1 \sqrt{\frac{\log p}{n}} + C_2 \epsilon\right).
\end{equation*}
\end{Remark}

Turning to the GLasso, we focus on the class of precision matrices satisfying the following incoherence assumption:
\begin{Assumption}
\label{AssumpIncoh}
There exists some $0 < \alpha \le 1$ such that
\begin{equation}
\label{EqnIncoh}
\max_{e \in S^c} \|\bGamma^*_{eS} (\bGamma^*_{SS})^{-1}\|_{L_1} \le 1 - \alpha,
\end{equation}
where $\bGamma^* := \bSigma^* \otimes \bSigma^*$ and $S = \supp(\bOmega^*)$ is the true edge set.
\end{Assumption}
We then have the following result, which is stated in terms of the population-level quantities
\begin{equation*}
\kappa_{\Sigma^*} = \|\bSigma^*\|_{L_1}, \qquad \text{and} \qquad \kappa_{\Gamma^*} = \|(\bGamma^*_{SS})^{-1}\|_{L_1},
\end{equation*}
as well as $k$, the maximum number of nonzero elements in each row of $\bOmega^*$. The theorem also involves constants $C_0, C_1$, and $C_2$, which are independent of $\epsilon$ and the problem instances $n$, $p$, and $k$.

\begin{Theorem}
\label{ThmRobustGLasso}
Under the cellwise contamination model~\eqref{cellContam}, suppose inequality~\eqref{EqnSigmaBd} is satisfied, and $\epsilon = \max_{1 \le i \le p} \epsilon_i \le 0.02$. Also suppose the sample size satisfies the scaling
\begin{equation}
\label{EqnGLassoScaling}
n \ge C_2 \tau \log p \cdot \left(\frac{1}{6(1+8/\alpha) k \max\{\kappa_{\Sigma^*} \kappa_{\Gamma^*}, \kappa_{\Sigma^*}^3 \kappa_{\Gamma^*}^2\}} - C_0 \epsilon\right)^{-2},
\end{equation}
and suppose Assumption~\ref{AssumpIncoh} holds. Suppose $\lambda = \frac{8}{\alpha}\left(C_0\epsilon + C_1 \sqrt{\frac{\tau \log p}{n}}\right)$. Then with probability at least $1 - p^{2-\tau}$, the GLasso estimator~\eqref{GLasso} satisfies $\supp(\hOmega) \subseteq \supp(\bOmega^*)$, and
\begin{equation*}
\|\hOmega - \bOmega^*\|_\infty \le 2 \|(\bGamma^*_{SS})^{-1}\|_{L_1} \left(1 + \frac{8}{\alpha}\right) \left(C_0 \epsilon + C_1 \sqrt{\frac{\tau \log p}{n}}\right).
\end{equation*}
\end{Theorem}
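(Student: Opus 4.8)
The plan is to obtain this theorem by feeding the elementwise covariance deviation bound of Theorem~\ref{ThmRobustCovK} into the deterministic primal--dual witness analysis of the graphical Lasso (in the style of Ravikumar, Wainwright, Raskutti and Yu). The guiding observation is that in that analysis the data enter \emph{only} through the scalar $\delta := \|\hSigma^K - \bSigma^*\|_\infty$: once $\delta$ is controlled, support containment and the $\ell_\infty$ error bound are purely deterministic consequences of Assumption~\ref{AssumpIncoh} and the choice of $\lambda$. In particular, the fact that $\hSigma^K$ need not be positive semidefinite is irrelevant, since the off-diagonal-penalized log-determinant program~\eqref{GLasso} is strictly convex on the interior of the positive definite cone regardless of the input matrix, and the witness construction never invokes positive semidefiniteness of $\hSigma^K$; this is precisely why we may analyze the robust GLasso without a projection step.

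\textbf{Step 1 (probabilistic input).} First I would apply Theorem~\ref{ThmRobustCovK} with the constants $C$ and $C'$ rescaled to be of order $\sqrt{\tau}$, chosen large enough that each of $\tfrac{C^2}{\pi^2}-2$ and $2[\Phi^{-1}(0.75)]^2 C'^2 \min_{1\le i\le p} c^2(\sigma_i) - 1$ exceeds $\tau-2$ (with a little slack to absorb the constants $2$ and $6$ in the failure probability when $p$ is large). I would also check that inequality~\eqref{EqnContamScaling} and $\Phi^{-1}(0.75)C'\sqrt{\log p/n}<1$ persist under this rescaling, which they do once the sample-size scaling~\eqref{EqnGLassoScaling} holds. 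This yields, with probability at least $1 - p^{2-\tau}$,
\begin{equation*}
\delta := \|\hSigma^K - \bSigma^*\|_\infty \le C_0\epsilon + C_1\sqrt{\frac{\tau\log p}{n}},
\end{equation*}
where $C_0 = 97M_\sigma^2 + 89 M_\sigma + 82$ and $C_1$ collects the remaining $M_\sigma$-dependent constants together with the $\sqrt{\tau}$ rescaling.

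\textbf{Step 2 (deterministic core and error bound).} Conditioned on this event, I would run the primal--dual witness argument: solve~\eqref{GLasso} restricted to matrices supported on $S = \supp(\bOmega^*)$ to obtain $\tilde\Omega$; read off the dual matrix $\tilde Z_S$ from the stationarity condition on $S$; extend $\tilde Z$ to $S^c$ through the remaining stationarity equations; and verify strict dual feasibility $\|\tilde Z_{S^c}\|_\infty < 1$. This verification is where Assumption~\ref{AssumpIncoh} is used, together with a first-order Taylor expansion of $\nabla\log\det$ about $\bOmega^*$ whose remainder is controlled as soon as $2\kappa_{\Gamma^*}(\delta+\lambda)$ lies below $\tfrac{1}{3k\max\{\kappa_{\Sigma^*},\kappa_{\Sigma^*}^3\kappa_{\Gamma^*}\}}$. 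Choosing $\lambda = \tfrac{8}{\alpha}\big(C_0\epsilon + C_1\sqrt{\tau\log p/n}\big) \ge \tfrac{8}{\alpha}\delta$ simultaneously makes the incoherence slack in the dual feasibility bound strictly positive and gives $\delta+\lambda \le (1+\tfrac{8}{\alpha})\big(C_0\epsilon + C_1\sqrt{\tau\log p/n}\big)$; the scaling~\eqref{EqnGLassoScaling} (with $C_2 \asymp C_1^2$) is exactly the statement that this last quantity is at most $\tfrac{1}{6k\max\{\kappa_{\Sigma^*}\kappa_{\Gamma^*},\kappa_{\Sigma^*}^3\kappa_{\Gamma^*}^2\}}$, i.e.\ precisely the threshold needed. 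Strict dual feasibility then forces $(\tilde\Omega,\tilde Z)$ to be the unique optimum of the unrestricted program, so $\hOmega = \tilde\Omega$ and in particular $\supp(\hOmega)\subseteq S$. The same Taylor-remainder bookkeeping yields $\|\hOmega_S - \bOmega^*_S\|_\infty = \|\tilde\Omega_S - \bOmega^*_S\|_\infty \le 2\kappa_{\Gamma^*}(\delta+\lambda)$; since entries off $S$ vanish in both $\hOmega$ and $\bOmega^*$, this upgrades to $\|\hOmega - \bOmega^*\|_\infty \le 2\kappa_{\Gamma^*}(\delta+\lambda) \le 2\|(\bGamma^*_{SS})^{-1}\|_{L_1}\big(1+\tfrac{8}{\alpha}\big)\big(C_0\epsilon + C_1\sqrt{\tfrac{\tau\log p}{n}}\big)$, the claimed bound.

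\textbf{Main obstacle.} The genuinely new content --- high-dimensional concentration of Kendall's tau and the MAD under cellwise contamination --- is already packaged inside Theorem~\ref{ThmRobustCovK}, so what remains here is essentially (i) to confirm that the Ravikumar--Wainwright--Raskutti--Yu deterministic machinery transfers verbatim with $\hSigma^K$ in place of the sample covariance (it does, since only $\|\hSigma^K-\bSigma^*\|_\infty$ is ever used), and (ii) to track constants carefully so that the stated $\lambda$ and the scaling~\eqref{EqnGLassoScaling} line up exactly with the threshold that appears in the strict-dual-feasibility and error-bound steps. I expect the constant bookkeeping in (ii) to be the only fiddly part; there is no essential difficulty beyond it.
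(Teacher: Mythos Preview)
Your proposal is correct and matches the paper's approach in substance. The only cosmetic difference is that the paper packages Step~2 by quoting Theorem~1 of \cite{RavEtal11} as a black-box lemma stated in terms of a per-entry tail function $f(n,\delta)$ (with $f(n,\delta)=c_1\exp(c_2 n(\delta-c_0\epsilon)^2)$ read off from the proofs behind Theorem~\ref{ThmRobustCovK}), then computes $\bar\delta_f(n,p^\tau)=c_0\epsilon+C_1\sqrt{\tau\log p/n}$ and $\bar n_f(\delta,p^\tau)=C_2\tau\log p/(\delta-c_0\epsilon)^2$; you instead unroll the primal--dual witness argument explicitly after first taking the union bound via Theorem~\ref{ThmRobustCovK}---these are equivalent routes to the same deterministic core.
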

The proof of Theorem~\ref{ThmRobustGLasso} is contained in Section~\ref{SecThmRobustGLasso}. Note that Theorem~\ref{ThmRobustGLasso} implicitly assumes that $\epsilon \le \frac{C}{k}$, so that the expression in parentheses on the right-hand side of inequality~\eqref{EqnGLassoScaling} is positive.

\begin{Remark}
Comparing the results of Theorems~\ref{ThmRobustCLIME} and~\ref{ThmRobustGLasso}, we see that as in the traditional uncontaminated setting, the GLasso delivers slightly stronger guarantees, at the expense of more stringent assumptions. In particular, the GLasso requires the sample size to scale as $n \ge Ck^2 \log p$, whereas the CLIME requires the scaling $n \ge C' \|\bOmega^*\|_{L_1}^2 \log p$ in order to achieve consistency. When the parameter $M$ defining the precision matrix class scales more slowly than $k^2$, the CLIME thus requires a weaker scaling. In addition, the GLasso result supposes Assumption~\ref{AssumpIncoh}, which posits an incoherence bound on submatrices of $\bGamma^*$. On the other hand, Theorem~\ref{ThmRobustGLasso} establishes that the $\supp(\hOmega) \subseteq \supp(\bOmega^*)$ for the GLasso estimator, whereas Theorem~\ref{ThmRobustCLIME} only guarantees consistency for the CLIME estimator in terms of $\ell_\infty$-norm, so the estimated support might contain extraneous terms. In the case of the CLIME estimator, however, the true support of $\bOmega^*$ may be obtained via thresholding, assuming the nonzero elements of $\bOmega^*$ are of the order $\Omega\left(\sqrt{\frac{\log p}{n}}\right)$.
\end{Remark}

Focusing on the level of contamination $\epsilon$ in relation to the problem dimensions, note that Theorems~\ref{ThmRobustCLIME} and~\ref{ThmRobustGLasso} both imply an $O\left(\sqrt{\frac{\log p}{n}}\right) + O(\epsilon)$ error bound on the precision matrix estimator, under the corresponding assumptions. Hence, when $\epsilon \le C \sqrt{\frac{\log p}{n}}$, the estimation error matches the error of the optimal precision matrix estimator in the uncontaminated case, up to a constant factor \citep{RenEtal15}. Further note that when $\epsilon \le C \sqrt{\frac{\log p}{n}}$, the condition $\epsilon = O\left(\frac{1}{k}\right)$ required by the condition~\eqref{EqnGLassoScaling} in Theorem~\ref{ThmRobustGLasso} clearly holds when the sample size satisfies $n \ge Ck^2 \log p$. Note that although the level of contamination tolerated by the estimator decreases as the level of sparsity increases, it is \emph{not} required to decrease as $n$ and $p$ increase, as long as the ratio $\sqrt{\frac{\log p}{n}}$ remains fixed. Thus, the conclusions of Theorems~\ref{ThmRobustCLIME} and~\ref{ThmRobustGLasso} are truly high-dimensional. As in the case of the robust covariance matrix estimators, another nice feature is that when the data are uncontaminated ($\epsilon = 0$), the rate of convergence of the robust precision matrix estimator to the true precision matrix agrees with the optimal rate.

Lastly, note that since the inverse of the correlation matrix has the same support as the precision matrix, we could also estimate $\supp(\bOmega^*)$ using the Kendall's or Spearman's correlation matrices $\hrho^K, \hrho^S$, defined by
\begin{equation*}
\hrho^K_{ij} = \sin\left(\frac{\pi}{2} \boldr^K_{ij}\right), \qquad \text{and} \qquad \hrho^S_{ij} = 2 \sin\left(\frac{\pi}{6} \boldr^S_{ij}\right),
\end{equation*}
respectively, as inputs to the CLIME~\eqref{CLIME} or GLasso~\eqref{GLasso}. Then the same derivations as in Theorems~\ref{ThmRobustCLIME} and~\ref{ThmRobustGLasso}, omitting the concentration bounds on the MAD estimates of scale, would show convergence of $\hrho^K$ and $\hrho^S$ to the population correlation matrix $\brho^*$ in $\ell_\infty$-norm. Note, however, that the conditions for support recovery would then need to hold for the correlation matrix $\brho^*$, rather than for the precision matrix $\bOmega^*$. In particular, a minimum signal strength requirement on $\brho^*$ is stronger than the same requirement imposed on $\bOmega^*$, since the latter can scale inversely with the standard deviations of individual variables in the joint distribution. Therefore, we have chosen to focus our attention in this paper on the output of the CLIME and GLasso when applied to an estimate of the covariance matrix rather than the correlation matrix.

%%%%%%%%%%%%%%%%%%%%%%%%%%%%%%%%%%%%%%%%%%%%%%%%%%%%%%%%%%%%%%%%%%%%%%%%%%%%%%
\subsection{Consequences for Robust Estimation}
\label{SecConsequences}

We now interpret the conclusions of our theorems in some concrete settings, where the data matrix is contaminated according to several different mechanisms.

\paragraph{\textbf{Constant fraction of outliers:}} We first briefly discuss the most basic setting of cellwise contamination, to emphasize the generality of our results. Following the model~\eqref{cellContam}, suppose each entry of the data matrix $\X$ is contaminated independently with probability $\epsilon$. Furthermore, either all contaminated entries may be drawn independently from a fixed contaminating distribution, or the contaminated entries in each row may be drawn jointly from a fixed contaminating distribution. In each case, Theorems~\ref{ThmRobustCovK} and~\ref{ThmRobustCovS} provide elementwise error bounds on the robust covariance estimators, and Theorems~\ref{ThmRobustCLIME} and~\ref{ThmRobustGLasso} provide elementwise error bounds on the robust precision matrix estimators constructed from the CLIME and GLasso. The strength of the theorems lies in the fact that we do not make any side assumptions about the outlier distribution; in particular, it may be heavy-tailed and/or contain point masses. Hence, whereas statistics such as the sample covariance and sample correlation will have slower rates of convergence due to a constant fraction of outliers drawn from an ill-behaved distribution, their robust counterparts are agnostic to the outlier distribution.

It is also important to note that the statistical error bounds given in the theorems of Sections~\ref{SecCovEst} and~\ref{SecPrecEst} continue to hold when $\epsilon > C \sqrt{\frac{\log p}{n}}$. The difference is that in such scenarios, the statistical error will be of the order $O(\epsilon)$ rather than $O\left(\sqrt{\frac{\log p}{n}}\right)$. However, the effect of an $\epsilon$ fraction of outliers nonetheless grows only linearly as a function of $\epsilon$. This emphasizes the robustness properties of the covariance and precision matrix estimators studied in our paper.

\paragraph{\textbf{Missing data:}} Turning to a somewhat different setting, note that missing data may also be seen as an instance of cellwise contamination. In this model, data are missing completely at random (MCAR), meaning that the probability of missingness is independent of the location of the unobserved entry of the data matrix \citep{LitRub86}. In other words, if we observe the matrix $\X^{\text{mis}}$ with missing entries, where the probability that an entry in column $i$ is missing is equal to $\epsilon_i$, we have
\begin{equation}
\label{EqnMCAR}
X_{ki}^{\text{mis}} =
\begin{cases}
Y_{ki}, & \text{ with probability } 1-\epsilon_i, \\
\text{missing}, & \text{ with probability } \epsilon_i,
\end{cases}
\end{equation}
where $\Y$ is the fully-observed matrix. Note that if we zero-fill the missing entries of $\X^{\text{mis}}$, the resulting matrix $\X$ exactly follows the cellwise contamination model~\eqref{cellContam}, with $\Z_k = \0$ for all $k$. The following result is an immediate consequence of our theorems:
\begin{Corollary}
\label{CorMCAR}
Suppose data are drawn from the missing data model~\eqref{EqnMCAR}, and the matrix $\X$ is the zero-filled data matrix. Let $\epsilon = \max_{1 \le i \le p} \epsilon_i$. Under the same conditions as in Theorem~\ref{ThmRobustCLIME}, we have
\begin{equation*}
\|\hOmega - \bOmega^*\|_\infty \le 4 \|\bOmega^*\|_{L_1} \lambda,
\end{equation*}
for the robust CLIME estimator constructed from $\X$. Under the same conditions as in Theorem~\ref{ThmRobustGLasso}, we have $\supp(\hOmega) \subseteq \supp(\bOmega^*)$ and
\begin{equation*}
\|\hOmega - \bOmega^*\|_\infty \le 2 \|(\bGamma^*_{SS})^{-1}\|_{L_1} \left(1 + \frac{8}{\alpha}\right) \left(C_0 \epsilon + C_1 \sqrt{\frac{\tau \log p}{n}}\right),
\end{equation*}
for the robust GLasso estimator constructed from $\X$.
\end{Corollary}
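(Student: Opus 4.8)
The plan is to obtain Corollary~\ref{CorMCAR} as an immediate specialization of Theorems~\ref{ThmRobustCLIME} and~\ref{ThmRobustGLasso}, with no new estimation analysis required. The first step is to check that the zero-filled matrix $\X$ is literally an instance of the cellwise contamination model~\eqref{cellContam}. Given the missing-data model~\eqref{EqnMCAR}, for each $k$ I take $\Y_k \sim N(\bmu, \bSigma^*)$ to be the fully-observed row, set $\Z_k = \0$ deterministically, and define $\B_k = \diag(B_{k1}, \ldots, B_{kp})$ with $B_{ki} = \1(\text{entry } (k,i) \text{ is missing})$. Under MCAR the $B_{ki}$ are independent Bernoulli variables with $P(B_{ki}=1)=\epsilon_i$ and are independent of the underlying values, so $\Y_k,\Z_k,\B_k$ are mutually independent ($\Z_k$ is constant, hence trivially independent of everything). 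Then $X_{ki} = (1-B_{ki})Y_{ki} + B_{ki}\cdot 0$ reproduces exactly the zero-filled observation, so~\eqref{cellContam} holds with outlier-generating distribution $H^*$ equal to the point mass at $\0$.

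The second step is simply to invoke the two theorems. Theorems~\ref{ThmRobustCovK}, \ref{ThmRobustCLIME}, and~\ref{ThmRobustGLasso} impose no assumption whatsoever on the nature of $H^*$ --- in particular they remain valid when $H^*$ is a degenerate point mass --- so all of their hypotheses (inequality~\eqref{EqnSigmaBd}, the bound $\epsilon\le 0.02$, the scaling condition~\eqref{EqnContamScaling} for the CLIME, the scaling~\eqref{EqnGLassoScaling} together with Assumption~\ref{AssumpIncoh} for the GLasso, and the prescribed choices of $\lambda$) are exactly the ones already assumed in the statement of Corollary~\ref{CorMCAR}. The conclusions $\|\hOmega-\bOmega^*\|_\infty \le 4\|\bOmega^*\|_{L_1}\lambda$ and, respectively, $\supp(\hOmega)\subseteq\supp(\bOmega^*)$ with the stated $\ell_\infty$ bound, then follow verbatim, with the same probability guarantees.

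Two small points merit a sentence of care. First, the MAD scale estimates $\hsigma_i$ are computed from the zero-filled columns, but since $\epsilon\le 0.02 < 1/2$ strictly fewer than half of the entries in any column equal zero, so the MAD concentration bounds (Lemmas~\ref{MADterm2} and~\ref{MADterm1}) used in the proof of Theorem~\ref{ThmRobustCovK} go through unchanged. Second, zero-filling introduces ties among observations, but Kendall's tau is defined with $\sign(0)=0$ and thus handles ties gracefully; this is precisely why the corollary is stated for the Kendall-based estimator, since the Spearman-based version would invoke Theorem~\ref{ThmRobustCovS}, which additionally requires distinct ranks --- a hypothesis that fails under zero-filling. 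There is no genuine obstacle here: the only step needing any care is the model identification of the first paragraph, after which the corollary is a one-line consequence of the two theorems.
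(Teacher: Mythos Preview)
Your proposal is correct and follows essentially the same approach as the paper: the paper simply notes in the sentence preceding the corollary that zero-filling the missing entries makes $\X$ an instance of the cellwise contamination model~\eqref{cellContam} with $\Z_k=\0$, and then declares the result ``an immediate consequence of our theorems.'' Your write-up spells out the model identification and independence structure more carefully and adds the helpful caveat about ties (relevant for Spearman but not Kendall), but the argument is the same.
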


Note that the conclusion of Corollary~\ref{CorMCAR} does not actually require the matrix $\X$ to be zero-filled for missing values; in fact, we could fill the missing entries with samples generated according to any distribution (as long as the distribution remains the same across rows). This is because the missing entries are essentially taken as outliers. Of course, our bounds should only be interpreted up to constant factors, and filling missing entries in a strategic way, e.g., filling entries in column $i$ with the mean $E(X_{ki})$, could lead to smaller estimation error in practice.

\paragraph{\textbf{Rowwise contamination:}} Although we have thus far assumed that data are contaminated according to a cellwise mechanism, we now show that the same results apply for rowwise contamination, as well. Recall that each row in the data matrix for the rowwise contamination model with contamination level $\epsilon$ is given by
\begin{equation}
\label{rowContam}
\X_k = (1-B_k) \Y_k + B_k \Z_k, \qquad \forall 1 \le k \le n,
\end{equation}
where $\Y_k$ is the uncontaminated row vector, $\Z_k$ is the contamination vector, and $B_k \sim\text{Bernoulli}(\epsilon)$.

Although model~\eqref{rowContam} differs from model~\eqref{cellContam}, a simple inspection of the proofs of Theorems~\ref{ThmRobustCLIME} and~\ref{ThmRobustGLasso} shows that only Lemma~\ref{kendalltauconsistency} needs to be modified. Furthermore, the equation~\eqref{EqnPairMarginal}, giving the distribution of pairwise entries in a row, simply needs to be replaced by the equation
\begin{equation}
\label{EqnRowPair}
(X_{ki}, X_{kj}) \stackrel{\text{i.i.d.}}\sim F_{ij} = (1-\epsilon) \Phi_{\bmu_{\{i, j\}}, \bSigma_{\{i, j\}}} + \epsilon H_{ij}, \qquad \forall 1 \le k \le n,
\end{equation}
in the proof of Lemma~\ref{kendalltauconsistency}. Equation~\eqref{EqnRowPair} comes from the fact that the pair is either drawn jointly from a normal distribution with probability $1-\epsilon$, or from the contaminating distribution with probability $\epsilon$. Then the remainder of the argument follows as before, implying that the same conclusion of Lemma~\ref{kendalltauconsistency} applies. (We could obtain a smaller prefactor for $\epsilon$ in the bound~\eqref{tauconsistency}, since $2\epsilon$ is replaced by $\epsilon$, but we are not concerned about optimizing constants here.)

We therefore arrive at the following result:
\begin{Corollary}
\label{CorRowContam}
Under the rowwise contamination model~\eqref{rowContam}, the same conclusions as in Corollary~\ref{CorMCAR} hold for the CLIME and GLasso estimators constructed from $\X$.
\end{Corollary}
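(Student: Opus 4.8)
The plan is to reduce Corollary~\ref{CorRowContam} to the covariance estimation bounds of Theorems~\ref{ThmRobustCovK} and~\ref{ThmRobustCovS}, after observing that the arguments behind those bounds use almost nothing specific to the cellwise Bernoulli structure of model~\eqref{cellContam}. First, note that the proofs of Theorems~\ref{ThmRobustCLIME} and~\ref{ThmRobustGLasso} (Sections~\ref{SecThmRobustCLIME} and~\ref{SecThmRobustGLasso}) are deterministic once a good covariance estimate is in hand: on an event where $\|\hSigma - \bSigma^*\|_\infty$ is at most the right-hand side of~\eqref{EqnCovKendall} (respectively~\eqref{EqnCovSpearman}), the CLIME feasibility/duality argument and the GLasso primal--dual witness argument proceed using only the membership of $\bOmega^*$ in the relevant matrix class and, for the GLasso, Assumption~\ref{AssumpIncoh}; the high-probability guarantees of Theorems~\ref{ThmRobustCLIME} and~\ref{ThmRobustGLasso} are inherited entirely from the covariance concentration events. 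Hence it suffices to re-establish~\eqref{EqnCovKendall} and~\eqref{EqnCovSpearman} with model~\eqref{cellContam} replaced by the rowwise model~\eqref{rowContam}; the conclusions of Corollary~\ref{CorMCAR} then follow verbatim.

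The robust covariance estimator $\hSigma^K$ (and $\hSigma^S$) is built from two independent pieces: the rescaled MAD scale estimates $\hsigma_i$, and the transformed Kendall's (or Spearman's) correlations. For the scale estimates, the only distributional input is the marginal law of one coordinate $X_{ki}$, which under model~\eqref{cellContam} equals $(1-\epsilon_i)\Phi_i + \epsilon_i H_i$ (with $\Phi_i$ the marginal of the clean component $N(\bmu,\bSigma^*)$) and under model~\eqref{rowContam} equals $(1-\epsilon)\Phi_i + \epsilon H_i$, of exactly the same form; so the MAD concentration results (Appendix~\ref{AppMAD}, Lemmas~\ref{MADterm2} and~\ref{MADterm1}) carry over with each $\epsilon_i$ replaced by $\epsilon$, and in particular the constants $c(\sigma_i)$, which depend only on the clean Gaussian component, are unaffected. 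For the correlations, the finite-sample deviation of $\boldr^K_{ij}$ from its population value~\eqref{EqnRhoKpop} (respectively $\boldr^S_{ij}$ from~\eqref{EqnRhoSpop}) is a $U$-statistic / rank-statistic concentration inequality valid for any i.i.d.\ sample, hence insensitive to the contamination mechanism.

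The only model-dependent step is the population-level bias bound --- the gap between the target $\sin(\tfrac{\pi}{2}\boldr^K_{ij})$ and the true Gaussian correlation $\brho_{ij}$ at the population level --- which is the content of Lemma~\ref{kendalltauconsistency} and is governed entirely by the pairwise marginal law of $(X_{ki},X_{kj})$. Under model~\eqref{rowContam} the cellwise pairwise law~\eqref{EqnPairMarginal} is replaced by~\eqref{EqnRowPair}, so $(X_{ki},X_{kj})$ is i.i.d.\ with law $(1-\epsilon)\Phi_{\bmu_{\{i, j\}}, \bSigma_{\{i, j\}}} + \epsilon H_{ij}$. I would then rerun the argument of Lemma~\ref{kendalltauconsistency}: expanding $E[\sign(X_{1i}-X_{2i})\sign(X_{1j}-X_{2j})]$ over whether each of the two sampled rows is clean or contaminated, the terms touching at least one contaminated row carry total probability $1-(1-\epsilon)^2 \le 2\epsilon$ and an integrand bounded by $1$ in magnitude, so the population Kendall's tau under the contaminated law differs from its value under $\Phi_{\bmu_{\{i, j\}}, \bSigma_{\{i, j\}}}$ by at most $2\epsilon$; composing with the Gaussian identity $\brho_{ij} = \sin(\tfrac{\pi}{2}\brho^K_{ij})$ valid at $\epsilon = 0$ and with the $\tfrac{\pi}{2}$-Lipschitz map $x\mapsto\sin(\tfrac{\pi}{2}x)$ gives the same $O(\epsilon)$ bias as in the cellwise case, with an equal or smaller prefactor, as the text following~\eqref{EqnRowPair} notes. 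The Spearman estimate is identical except that the functional~\eqref{EqnRhoSpop} involves three independent rows, so the contaminated weight becomes $1-(1-\epsilon)^3 = O(\epsilon)$. Feeding these bias bounds and the unchanged concentration bounds into the proofs of Theorems~\ref{ThmRobustCovK} and~\ref{ThmRobustCovS} reproduces~\eqref{EqnCovKendall} and~\eqref{EqnCovSpearman} under model~\eqref{rowContam}, and the deterministic CLIME/GLasso arguments then complete the proof.

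The main obstacle is purely one of bookkeeping: one must walk through the long proofs of Theorems~\ref{ThmRobustCovK}--\ref{ThmRobustGLasso} and check that every appeal to the model enters only through (i) the marginal law of a single coordinate, (ii) the joint law of a coordinate pair (or triple, for Spearman), or (iii) distribution-free concentration of quantile- and rank-based statistics, and that the Gaussian tail estimates (e.g.\ those controlling $c(\sigma_i)$ and the MAD concentration) touch only the clean component $N(\bmu,\bSigma^*)$. Once this is verified there is no genuine difficulty, and if anything the rowwise case is slightly easier, since its pairwise (and triple-wise) contamination weight is a true two-component mixture rather than the four-component mixture arising under the cellwise model.
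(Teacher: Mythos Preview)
Your proposal is correct and follows essentially the same approach as the paper: the paper's proof (given in the paragraph preceding the corollary) observes that only Lemma~\ref{kendalltauconsistency} needs modification, since the pairwise marginal~\eqref{EqnPairMarginal} is simply replaced by the two-component mixture~\eqref{EqnRowPair} with contamination weight $\epsilon$ in place of $\gamma_{ij}\le 2\epsilon$, after which everything else goes through verbatim. Your write-up is more explicit than the paper about \emph{why} the MAD and $U$-statistic concentration steps require no change (the univariate marginals have the same mixture form, and the concentration inequalities are distribution-free), and you correctly note the analogous modification for Spearman's rho involving three rows, which the paper leaves implicit.
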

We emphasize that the rowwise contamination model~\eqref{rowContam} is \emph{not} in general a special case of the cellwise contamination model~\eqref{cellContam}; rather, the proof techniques for analyzing the cellwise model may be used to handle the rowwise model, as well.

%{\red Discuss alternative methods for rowwise contamination. Note that methods specifically designed for rowwise contamination might perform better than our algorithm.

%\begin{itemize}
%\item Comparison with other robust estimators of covariance and/or inverse covariance matrix
%\item Situations where Chen, Gao \& Ren (2015), Tyler/Maronna $M$-estimators fail, but ours succeeds?
%\item Drton's inverse covariance estimator?
%\end{itemize}
%}

%%%%%%%

\section{Breakdown Point}
\label{SecBreakdown}

We now turn to a brief discussion of the breakdown point of the estimators studied in this paper. As is discussed in \cite{DonohoHuber1983} and \cite{HamEtal11}, breakdown analysis concerns the \emph{global} behavior of a procedure, under large departures from an assumed situation.  On the other hand, the theoretical analysis of statistical consistency and efficiency are related to notions of infinitesimal robustness, and quantifies the \emph{local} behavior of a procedure at or near the assumed situation. The analogy is made in \cite{DonohoHuber1983} between the fields of material science and statistics, where the notions of stiffness (resistance of a material to displacements caused by a small load) and breaking strength (the amount of load required to make the material fracture) parallel those of the influence function and the breakdown point. Ideally, a procedure should perform well both locally and globally; optimizing either measure alone is unwise. Our key result of this section shows that although the GLasso and CLIME estimators both enjoy roughly the same statistical rate of estimation, the CLIME does \emph{not} perform as well as the GLasso when the breakdown point is used to quantify the degree of robustness.

Our analysis of the GLasso estimator closely follows that of \cite{Croux2014}; however, since the specific precision matrix estimators analyzed in our paper differ slightly from those of \cite{Croux2014}, we include the full argument for the sake of completeness. We define the finite-sample breakdown point of the precision matrix estimator under cellwise contamination to be
\begin{equation}
\label{EqnPrecisionBreak}
\epsilon_n(\hOmega, \X) := \min_{1 \le m \le n} \left\{\frac{m}{n}: \sup_{\X^m} D(\hOmega(\X), \hOmega(\X^m)) = \infty\right\},
\end{equation}
where
\begin{equation*}
D(\bA,\B) := \max\left\{|\lambda_1(\bA) - \lambda_1(\B)|, |\lambda_p^{-1}(\bA) - \lambda_p^{-1}(\B)|\right\},
\end{equation*}
and $\X^m$ is a data matrix obtained from $\X$ by replacing at most $m$ entries in each column by arbitrary elements. We also define the explosion finite sample breakdown point of a covariance matrix estimator as follows:
\begin{equation}
\label{EqnExplode}
\epsilon_n^+(\bS, \X) := \min_{1 \le m \le n} \left\{\frac{m}{n}: \sup_{\X^m} |\lambda_1(\bS(\X)) - \lambda_1(\bS(\X^m))| = \infty\right\}
\end{equation}
\citep{MaronnaZamar2002}. Note that the explosion breakdown point only accounts for maximum eigenvalues, whereas the overall covariance matrix estimator breaks down under explosion or \emph{implosion} (i.e., arbitrarily small minimum eigenvalues).  Also, the breakdown point under cellwise contamination is less than or equal to the breakdown point under rowwise contamination, since the supremum in the latter case is only taken over $\X^m$ with at most $m$ rows replaced. 
%according to this definition,
%\begin{equation*}
%\epsilon_n(\hOmega, \X) \le \epsilon_n^+(\hOmega, \X),
%\end{equation*}

We will consider the breakdown behavior of a slightly tweaked version of the GLasso presented earlier, using a positive semidefinite matrix as the input to the optimization problem. Consider the matrix
\begin{equation}
\label{EqnCovProj}
\cSigma(\X) := \argmin_{\M\succeq 0} \|\hSigma - \M\|_\infty,
\end{equation}
where $\hSigma = \hSigma(\X)$ is the robust covariance matrix estimator constructed from the data matrix $\X$. Let
\begin{equation}
\label{EqnGLasso}
\cOmega(\X) := \argmin_{\bOmega\succ 0} \big\{\tr(\cSigma \bOmega) - \log\det(\bOmega) + \lambda\|\bOmega\|_{1, \text{off}}\big\}
\end{equation}
be the corresponding GLasso estimator. Note that from a computational standpoint, the projection step~\eqref{EqnCovProj} is important so that fast solvers for the GLasso program~\eqref{EqnGLasso} may be applied (e.g., \cite{FriEtal08}). Furthermore, we note that the projection step~\eqref{EqnCovProj} constitutes a convex program, so the additional computational time is negligible compared to the computation required for running the GLasso. We have the following result:
\begin{Theorem}
\label{ThmGLassoBreak}
Consider the positive semidefinite version of the robust GLasso estimator~\eqref{EqnGLasso}. Then under the same conditions as in Theorem~\ref{ThmRobustGLasso}, we have $\supp(\cOmega) \subseteq \supp(\bOmega^*)$ and
\begin{equation}
\label{EqnProjGLassoBound}
\|\cOmega - \bOmega^*\|_\infty \le 2 \|(\bGamma^*_{SS})^{-1}\|_{L_1} \left(1 + \frac{8}{\alpha}\right) \left(C_0' \epsilon + C_1' \sqrt{\frac{\tau \log p}{n}}\right).
\end{equation}
Furthermore, for any data matrix $\X \in \mathbb{R}^{n \times p}$, the breakdown point satisfies $\epsilon_n(\cOmega, \X) = 50\%$.
\end{Theorem}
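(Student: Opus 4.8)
The plan is to treat the $\ell_\infty$ bound~\eqref{EqnProjGLassoBound} and the breakdown claim separately. For the estimation bound, the only new ingredient is that the projection~\eqref{EqnCovProj} costs at most a factor of two: since $\bSigma^*$ is a covariance matrix it is positive semidefinite, hence feasible in~\eqref{EqnCovProj}, so $\|\hSigma-\cSigma\|_\infty\le\|\hSigma-\bSigma^*\|_\infty$ and therefore $\|\cSigma-\bSigma^*\|_\infty\le 2\|\hSigma-\bSigma^*\|_\infty$ by the triangle inequality. Combining this with Theorem~\ref{ThmRobustCovK}, the matrix $\cSigma$ satisfies the same form of elementwise deviation bound as $\hSigma$ with the constants doubled; and since the proof of Theorem~\ref{ThmRobustGLasso} invokes the input covariance matrix only through a bound of the form $\|\hSigma-\bSigma^*\|_\infty\le C_0\epsilon+C_1\sqrt{\tau\log p/n}$, re-running that argument verbatim with $\cSigma$ in place of $\hSigma$ delivers $\supp(\cOmega)\subseteq\supp(\bOmega^*)$ together with~\eqref{EqnProjGLassoBound}, with $C_0'\le 2C_0$ and $C_1'\le 2C_1$; positive semidefiniteness of $\cSigma$ plays no role here.

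For the breakdown point I would establish $\epsilon_n(\cOmega,\X)\ge 50\%$ and $\epsilon_n(\cOmega,\X)\le 50\%$ in turn, the guiding principle being that $\cOmega$ breaks down exactly when its input $\cSigma$ does, and that $\cSigma$ breaks down exactly when some MAD scale estimate $\hsigma_i$ does --- the transformed Kendall correlations $\sin(\frac{\pi}{2}\boldr^K_{ij})$ stay in $[-1,1]$ under any corruption, and the positive semidefinite projection cannot rescue (or create) a blow-up. For the lower bound, fix $\X^m$ replacing fewer than the MAD breakdown fraction of entries in each column; then $\hsigma_i(\X^m)\in[a,b]\subset(0,\infty)$ for all $i$ and all such corruptions (the $50\%$ breakdown property of the MAD for both explosion and implosion, recalled in Section~\ref{SecBackground}), so $\|\hSigma(\X^m)\|_\infty\le b^2$, and feasibility of $\0$ in~\eqref{EqnCovProj} gives $\|\cSigma(\X^m)\|_\infty\le 2b^2$. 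Moreover, raising any diagonal entry of a feasible competitor in~\eqref{EqnCovProj} up to its target preserves positive semidefiniteness and cannot increase the $\ell_\infty$-distance, so the minimizer may be taken with $\min_i\cSigma_{ii}(\X^m)\ge\min_i\hSigma_{ii}(\X^m)\ge a^2$. Writing the GLasso stationarity condition as $\cOmega^{-1}=\cSigma+\lambda Z$ with $Z_{ii}=0$ and $|Z_{ij}|\le 1$, we get $\tr(\cOmega^{-1})=\tr(\cSigma)\le 2pb^2$, hence $\lambda_p(\cOmega(\X^m))\ge 1/(2pb^2)$ so $\cOmega$ cannot implode, while $\lambda_1(\cOmega(\X^m))$ stays bounded by the standard control of the largest eigenvalue of a GLasso solution in terms of $\|\cSigma\|_2$, $\min_i\cSigma_{ii}$, $p$, and the fixed parameter $\lambda$ (as in the analysis of \cite{Croux2014}). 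Thus $D(\cOmega(\X),\cOmega(\X^m))$ is bounded uniformly over all such $\X^m$, giving $\epsilon_n(\cOmega,\X)\ge 50\%$.

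For the upper bound it suffices to exhibit one corruption, at the MAD breakdown level, that forces $D\to\infty$. Corrupting roughly half the entries of a single column $i$ to drive $\hsigma_i(\X^m)\to\infty$ makes $\hSigma_{ii}(\X^m)=\hsigma_i^2\to\infty$ while the principal submatrix of $\hSigma$ on the remaining coordinates is untouched; a positive semidefinite competitor that keeps the blown-up $i$th row and column of $\hSigma(\X^m)$ and only perturbs that fixed $(p-1)\times(p-1)$ block by a bounded amount shows the $\ell_\infty$-distance of $\hSigma(\X^m)$ to the positive semidefinite cone stays bounded. Hence $\cSigma_{ii}(\X^m)\ge\hSigma_{ii}(\X^m)-O(1)\to\infty$, so $\lambda_1(\cSigma(\X^m))\ge\cSigma_{ii}(\X^m)\to\infty$, and via $\cOmega^{-1}=\cSigma+\lambda Z$ this forces $\lambda_1(\cOmega^{-1}(\X^m))\to\infty$, i.e.\ $\lambda_p^{-1}(\cOmega(\X^m))\to\infty$. (One may instead drive some $\hsigma_i\to 0$, sending the $i$th row and column of $\hSigma$, hence a diagonal entry of $\cSigma$, hence $\lambda_p(\cOmega^{-1})$, to zero.) Therefore $\sup_{\X^m}D(\cOmega(\X),\cOmega(\X^m))=\infty$ at this contamination level, so $\epsilon_n(\cOmega,\X)\le 50\%$; combined with the lower bound, $\epsilon_n(\cOmega,\X)=50\%$.

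The main obstacle is the boundedness of $\lambda_1(\cOmega)$ in the lower-bound direction: one must rule out an exploding largest eigenvalue for a GLasso solution whose (bounded, positive semidefinite) input may be ill-conditioned. This calls either for a careful argument combining the stationarity conditions with the value of the optimized objective --- exploiting that $\lambda>0$ is fixed and that any near-null direction of $\cSigma$ must be spread across coordinates, so that $\|\cOmega\|_{1,\text{off}}$ would blow up if $\lambda_1(\cOmega)$ did --- or a direct appeal to the corresponding GLasso eigenvalue bound in \cite{Croux2014}. A secondary point is that the finite-sample breakdown point of the MAD agrees with $50\%$ only up to an $O(1/n)$ correction depending on the parity of $n$ and on ties in $\X$, so ``$50\%$'' is understood in the usual asymptotic sense.
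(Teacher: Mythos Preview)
Your treatment of the estimation bound~\eqref{EqnProjGLassoBound} is exactly the paper's: the projection costs a factor of two in the $\ell_\infty$-deviation of the input, and then the proof of Theorem~\ref{ThmRobustGLasso} (via Lemma~\ref{LemGLassoBd}) goes through verbatim with doubled constants.

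For the lower bound $\epsilon_n(\cOmega,\X)\ge 50\%$, the paper sidesteps the obstacle you flag. Rather than bounding both eigenvalues of $\cOmega(\X^m)$ directly, it invokes Theorem~1 of \cite{Croux2014}, which gives $\epsilon_n(\cOmega,\X)\ge\epsilon_n^+(\cSigma,\X)$ in one stroke; after that, only the \emph{explosion} of $\cSigma(\X^m)$ needs to be ruled out, and this follows from $\lambda_1(\cSigma(\X^m))\le p\|\cSigma(\X^m)-\bSigma^*\|_\infty+\|\bSigma^*\|_2\le 2p\|\hSigma(\X^m)\|_\infty+O(1)$ together with the $50\%$ breakdown of the MAD. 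Your trace identity $\tr(\cOmega^{-1})=\tr(\cSigma)$ is a clean way to control implosion of $\cOmega$, but the explosion half is exactly what Croux's theorem packages, so your ``or a direct appeal'' is in fact the paper's route from the start.

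For the upper bound the two arguments are genuinely different. The paper corrupts \emph{every} column to drive all $\hsigma_i$ to zero; since $\diag(\hSigma(\X^m))$ is feasible in~\eqref{EqnCovProj}, the diagonal of $\cSigma(\X^m)$ is forced small, and the stationarity identity $\diag(\cOmega^{-1})=\diag(\cSigma)$ then forces $\lambda_p(\cOmega^{-1})$ small, hence $\lambda_1(\cOmega)\to\infty$. You instead blow up a single $\hsigma_i$, use a Schur-complement competitor to show the $\ell_\infty$-distance to the PSD cone stays bounded (so $\cSigma_{ii}\to\infty$), and conclude $\lambda_p^{-1}(\cOmega)\to\infty$ via $\cOmega^{-1}=\cSigma+\lambda Z$. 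Both are valid; yours touches only one column and breaks $\cOmega$ by implosion rather than explosion, while the paper's avoids the Schur-complement construction by exploiting the simpler feasibility of $\diag(\hSigma)$. Your parenthetical alternative (driving a single $\hsigma_i\to 0$) is less immediate, since the remaining $(p-1)\times(p-1)$ block of $\hSigma$ need not be PSD and the projection may move the $i$th diagonal entry.
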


The proof of Theorem~\ref{ThmGLassoBreak} is provided in Section~\ref{SecThmGLassoBreak}.

\begin{Remark}
Note that Theorem~\ref{ThmGLassoBreak} guarantees that the robust GLasso estimator $\cOmega$ obtained from a semidefinite projection of the robust covariance estimator shares the same level of statistical consistency achieved by the robust GLasso estimator $\hOmega$. In addition, the precision matrix estimator $\cOmega$ has a breakdown point of 50\%. Although other authors \citep{Croux2014, Tarr2015} also suggest projecting the robust covariance estimator onto the positive semidefinite cone before applying the GLasso, they advocate a projection in terms of the Frobenius norm rather than the $\ell_\infty$-norm in the optimization program~\eqref{EqnCovProj}. As can be seen in the proof of Theorem~\ref{ThmGLassoBreak}, minimizing the elementwise $\ell_\infty$-norm is much more natural from the point of view of statistical consistency, since it guarantees that the $\ell_\infty$-error between the precision matrix estimate and the true precision matrix grows by at most a factor of two.
\end{Remark}

Turning to the CLIME estimator, we now show that although the CLIME is as robust as the GLasso in terms of statistical consistency under the cellwise contamination model, it has much poorer breakdown behavior. Consider the CLIME estimator based on corrupted data:
\begin{equation}
\label{EqnRobCLIME}
\begin{aligned}
& \text{min} & & \|\bOmega\|_1 \\
& \text{s.t. } & & \|\hSigma(\X^m) \bOmega - I\|_\infty \le \lambda,
\end{aligned}
\end{equation}
where $\hSigma(\X^m)$ is the robust covariance estimator based on a data matrix with at most $m$ arbitrarily corrupted entries per column. Since the CLIME estimator arises as the solution to a constrained linear program, the solution is undefined (infinite) when the problem is infeasible. Indeed, we will show in the following theorem that such a case may arise even by corrupting at most \emph{one} entry in each column of the data matrix.
\begin{Theorem}
\label{ThmCLIMEBreakdown}
In the case when $p = 2$, there exists $\X \in \mathbb{R}^{n \times 2}$ such that $\epsilon_n(\hOmega, \X) = \frac{1}{n}$, where $\hOmega$ denotes the CLIME estimator.
\end{Theorem}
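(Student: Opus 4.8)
The plan is to exhibit an explicit data matrix $\X \in \R^{n \times 2}$ (say with $n \ge 3$) for which $\hOmega(\X)$ is well-defined and finite, together with a corruption $\X^1$ that alters only a single entry of the second column, such that the robust covariance estimator $\hSigma(\X^1)$ is exactly singular and the CLIME program~\eqref{EqnRobCLIME} built from $\X^1$ is \emph{infeasible}. Since the solution of an infeasible linear program is undefined (``infinite''), this forces $D(\hOmega(\X), \hOmega(\X^1)) = \infty$, and because the minimum in~\eqref{EqnPrecisionBreak} ranges over $m \ge 1$, it follows that $\epsilon_n(\hOmega, \X) = 1/n$.

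First I would record the structure of the estimator when $p = 2$: writing $r := \boldr^K_{12}$ for Kendall's tau,
\begin{equation*}
\hSigma = \begin{pmatrix} \hsigma_1^2 & \hsigma_1 \hsigma_2 \sin\!\big(\tfrac{\pi}{2} r\big) \\ \hsigma_1 \hsigma_2 \sin\!\big(\tfrac{\pi}{2} r\big) & \hsigma_2^2 \end{pmatrix}, \qquad \det \hSigma = \hsigma_1^2 \hsigma_2^2 \cos^2\!\big(\tfrac{\pi}{2} r\big),
\end{equation*}
so that, provided the rescaled MADs $\hsigma_1, \hsigma_2$ are positive, $\hSigma$ is singular exactly when $r = \pm 1$, i.e., when the two sample columns are perfectly concordant or perfectly discordant. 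I would then take $\X$ with first column $(1, 2, \ldots, n)^T$ and second column $(2, 1, 3, 4, \ldots, n)^T$ --- the increasing sequence with its first two entries transposed. Exactly one of the $\binom{n}{2}$ sample pairs is then discordant, so $\boldr^K_{12}(\X) = 1 - \frac{4}{n(n-1)} \in (-1, 1)$; since both columns plainly have positive MAD, $\hSigma(\X)$ is nonsingular, the feasible set of~\eqref{EqnRobCLIME} for $\X$ is nonempty and bounded, and $\hOmega(\X)$ is well-defined and finite.

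Next I would define $\X^1$ by replacing the single entry $X_{1,2} = 2$ with any value strictly less than $1$ (e.g., $0$), leaving every other entry --- in particular the entire first column --- untouched. The corrupted second column $(0, 1, 3, 4, \ldots, n)^T$ is strictly increasing, as is the first column, so every sample pair is now concordant: $\boldr^K_{12}(\X^1) = 1$, while the MAD of the corrupted column is still positive. Hence $\hSigma(\X^1) = \bv \bv^T$ with $\bv = (\hsigma_1, \hsigma_2)^T$ having strictly positive entries --- a rank-one matrix.

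Finally I would show that this rank-one degeneracy makes~\eqref{EqnRobCLIME} infeasible for the relevant $\lambda$. Every column of $\hSigma(\X^1)\bOmega = \bv (\bOmega^T \bv)^T$ is a scalar multiple of $\bv$, so the constraint $\|\hSigma(\X^1)\bOmega - \I\|_\infty \le \lambda$ requires scalars $r_1, r_2$ with $\|r_1 \bv - \e_1\|_\infty \le \lambda$ and $\|r_2 \bv - \e_2\|_\infty \le \lambda$; intersecting the resulting intervals for $r_1$ and for $r_2$ shows this is possible only if $\lambda \ge \max(\hsigma_1, \hsigma_2)/(\hsigma_1 + \hsigma_2) \ge \tfrac12$. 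Thus for any $\lambda < \tfrac12$ --- which covers the tuning parameters relevant to Theorem~\ref{ThmRobustCLIME}, and can be extended to any $\lambda < 1$ by taking the first column to have much larger spread than the second --- the program has no feasible point, $\hOmega(\X^1)$ is undefined, and therefore $D(\hOmega(\X), \hOmega(\X^1)) = \infty$, giving $\epsilon_n(\hOmega, \X) = 1/n$. The only step that takes any genuine (if elementary) care is this last feasibility computation; everything else amounts to choosing a configuration in which a single corruption flips the lone discordant pair into concordance, collapsing Kendall's tau to $1$ and the $2 \times 2$ covariance estimate to rank one.
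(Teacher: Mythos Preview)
Your proposal is correct and follows essentially the same approach as the paper: exhibit a clean data matrix $\X$ with nonsingular $\hSigma(\X)$ (so CLIME is feasible), then change a single entry to make the two columns perfectly monotone, collapsing Kendall's tau to $\pm 1$ and $\hSigma(\X^1)$ to rank one, which renders the CLIME constraint infeasible for $\lambda < \tfrac12$. The only cosmetic differences are that the paper uses perfectly \emph{negatively} correlated columns $(a_k, -a_k)$ in $\X^1$ whereas you use perfectly \emph{positively} correlated ones, and that you write out the infeasibility computation $\lambda \ge \max(\hsigma_1,\hsigma_2)/(\hsigma_1+\hsigma_2) \ge \tfrac12$ in more detail than the paper does.
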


The proof of Theorem~\ref{ThmCLIMEBreakdown}, supplied in Section~\ref{SecThmCLIMEBreakdown}, provides the construction of a data matrix $\X \in \R^{n \times 2}$ where the CLIME estimator becomes infeasible after perturbing a single entry in each column. This is in stark contrast to the result in Theorem~\ref{ThmGLassoBreak}, which establishes that the breakdown point of the robust GLasso estimator is 50\%, for \emph{any} realization of the data matrix $\X$.

\begin{Remark}
Although Theorem~\ref{ThmCLIMEBreakdown} is stated for the case $p=2$, the argument used to prove the theorem is readily generalizable to higher dimensions, as well, in which case we would also have a matrix $\X \in \mathbb{R}^{n \times p}$ satisfying $\epsilon_n(\bOmega, \X) = \frac{1}{n}$. For instance, we could construct an $n \times p$ matrix $\X^1$ such that $\bSigma(\X^1)$ is a block matrix with upper-left block equal to the matrix constructed in the proof of Theorem~\ref{ThmCLIMEBreakdown}, lower-left block equal to the identity, and off-diagonal blocks equal to zero.
\end{Remark}
%{\red Do we need $n>p$?} 

The conclusion of Theorem~\ref{ThmCLIMEBreakdown} underscores the fact that consistency and breakdown point under cellwise contamination are in some sense orthogonal measures of robustness. As we demonstrated in the previous section, both the CLIME and GLasso lead to estimators that enjoy good rates of statistical consistency when the contamination fraction $\epsilon$ is sufficiently small relative to the problem parameters. On the other hand, the results of this section show that the CLIME is extremely non-robust in terms of its breakdown point. Similarly, procedures such as the Gnanadesikan-Kettenring estimator~\eqref{EqnGKCov} may be shown to be statistically consistent under cellwise contamination, but as discussed in \cite{Croux2014}, the breakdown point of the covariance estimator $\hSigma$ is at most 25\%, which leads to error propagation in $\hOmega$.
%if the covariance estimator is used as an input for the GLasso. {\red If $\hSigma\succeq 0$, then the breakdown point of $\hOmega$ is at least as large as the breakdown point of $\hSigma$?}

Finally, we note that the notion of breakdown point that we consider in equation~\eqref{EqnPrecisionBreak} is defined with respect to a finite sample, without recourse to probability distributions. Other notions of breakdown point, defined with respect to an $\epsilon$-contaminated distribution, have also been studied in the literature \citep{HamEtal11}. For some alternative measures of breakdown robustness, the CLIME estimator may have a more controlled breakdown behavior, but we have not explored them here.

\section{Proofs}
\label{SecProofs}

In this section, we provide an outline of the proofs of the main theorems in the paper. Proofs of the more technical supporting lemmas are contained in the supplementary Appendix.

\subsection{Proof of Theorem~\ref{ThmRobustCovK}}
\label{SecThmRobustCovK}

The proof is based on Lemma~\ref{kendalltauconsistency}, which gives an error bound for the pairwise terms $\sin(\frac{\pi}{2}\boldr^K_{ij})$, and Lemma~\ref{MADconsistency}, which gives an error bound for the scale estimates $\hsigma_i$. Note that we require the bound $\epsilon \le 0.02$ on the level of contamination in Lemma~\ref{kendalltauconsistency}, but the requirement could be relaxed with a more refined proof technique. The proofs of Lemmas~\ref{kendalltauconsistency} and~\ref{MADconsistency} are provided in Appendices~\ref{AppLemKendallTau} and~\ref{AppLemMADConsist}.

\begin{Lemma}\label{kendalltauconsistency}
Under model \eqref{cellContam}, let $\epsilon = \max_{1\leq i\leq p}\epsilon_i \le 0.02$. For any constant $C > \pi \sqrt{2}$, we have
\begin{equation}
\max_{1\leq i, j\leq p}\bigg|\sin\Big(\frac{\pi}{2}\boldr^K_{ij}\Big) - \brho_{ij}\bigg|\leq C\sqrt{\frac{\log p}{n}} + 26\pi \epsilon, \label{tauconsistency}
\end{equation}
with probability at least $1 - 2p^{-(C^2/\pi^2-2)}$.
%where $C$ is a constant independent of $n, p$, and $\epsilon$.
%{\\\blue 
%Under model \eqref{cellContam}, let $\epsilon = \max_{1\leq i\leq p}\epsilon_i<1$.  Suppose that $6\epsilon+17\epsilon^2\leq\sqrt{(\log p)/n}$. Then with probability at least $1 - 2p^{-2}$, 
%\begin{equation}
%\max_{1\leq i, j\leq p}\bigg|\sin\Big(\frac{\pi}{2}\htauij\Big) - \rho_{ij}^K\bigg|\leq 4\pi\sqrt{\frac{\log p}{n}}.
%\end{equation}
%}
\end{Lemma}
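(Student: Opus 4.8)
The plan is to fix a pair $1 \le i, j \le p$ and reduce to two one-dimensional estimates. Write $\Phi_{ij}$ for the law of the clean bivariate pair $(Y_{ki}, Y_{kj}) \sim N(\bmu_{\{i,j\}}, \bSigma^*_{\{i,j\}})$ and $F_{ij}$ for the law of the observed pair $(X_{ki}, X_{kj})$; since $\Y_k$, $\Z_k$, $\B_k$ are independent across $k$, the pairs $(X_{ki}, X_{kj})$, $k = 1, \ldots, n$, are i.i.d.\ with common law $F_{ij}$. Let $\tau(\cdot)$ be the population Kendall's tau functional, $\tau(F) = E[\sign(U_1 - U_2)\sign(V_1 - V_2)]$ for $(U_1, V_1), (U_2, V_2)$ i.i.d.\ $\sim F$; then $\brho_{ij} = \sin(\tfrac{\pi}{2}\tau(\Phi_{ij}))$ by the classical Gaussian identity, and $E[\boldr^K_{ij}] = \tau(F_{ij})$ by linearity. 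Since $t \mapsto \sin(\tfrac{\pi}{2}t)$ is $\tfrac{\pi}{2}$-Lipschitz, the triangle inequality gives
\begin{equation*}
\left|\sin\Big(\tfrac{\pi}{2}\boldr^K_{ij}\Big) - \brho_{ij}\right| \;\le\; \frac{\pi}{2}\left|\boldr^K_{ij} - \tau(F_{ij})\right| \;+\; \frac{\pi}{2}\left|\tau(F_{ij}) - \tau(\Phi_{ij})\right|.
\end{equation*}
I would control the first (stochastic) term by concentration and the second (deterministic bias) term by a robustness argument, and then take a union bound over the $\binom{p}{2}$ pairs.

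For the stochastic term, $\boldr^K_{ij}$ is a U-statistic of order two whose kernel takes values in $[-1, 1]$, so Hoeffding's inequality for U-statistics gives $P\big(|\boldr^K_{ij} - \tau(F_{ij})| \ge t\big) \le 2\exp(-n t^2 / 4)$ (up to the floor in $\lfloor n/2\rfloor$). Choosing $t = \tfrac{2C}{\pi}\sqrt{\log p / n}$ and unioning over the $\binom{p}{2}$ pairs, the total failure probability is at most $2 p^{-(C^2/\pi^2 - 2)}$, exactly the probability in the statement; the hypothesis $C > \pi\sqrt{2}$ is precisely what makes the exponent $C^2/\pi^2 - 2$ positive. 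On the complementary event, $\tfrac{\pi}{2}\max_{i,j}|\boldr^K_{ij} - \tau(F_{ij})| \le C\sqrt{\log p / n}$, which is the $\sqrt{\log p / n}$-part of the claimed bound.

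For the bias term, the cellwise model gives $(X_{ki}, X_{kj}) = (Y_{ki}, Y_{kj})$ unless $B_{ki} = 1$ or $B_{kj} = 1$, an event of probability $\eta_{ij} := \epsilon_i + \epsilon_j - \epsilon_i\epsilon_j \le 2\epsilon$; hence $F_{ij} = (1 - \eta_{ij})\Phi_{ij} + \eta_{ij}\widetilde{H}_{ij}$ for a completely unrestricted bivariate distribution $\widetilde{H}_{ij}$. I would then expand $\tau(F_{ij})$ — in which both arguments are drawn independently from this mixture — into its four terms according to whether each argument comes from $\Phi_{ij}$ or $\widetilde{H}_{ij}$, bounding the coefficient of the $\Phi_{ij}$–$\Phi_{ij}$ term against $1$ via $|(1-\eta_{ij})^2 - 1| \le 2\eta_{ij}$ and each remaining term using the fact that $\tau$ evaluated on any product of distributions lies in $[-1, 1]$. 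This yields $|\tau(F_{ij}) - \tau(\Phi_{ij})| \le c_0 \eta_{ij} \le 2 c_0 \epsilon$ for an absolute constant $c_0$ (one may take $c_0 = 5$), hence a contribution $\tfrac{\pi}{2}\cdot 2 c_0 \epsilon$, of which the displayed $26\pi\epsilon$ is a crude overestimate. The assumption $\epsilon \le 0.02$ enters only in keeping this constant accounting tidy and, as the surrounding remark notes, is an artifact of the proof rather than a genuine restriction.

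I expect the bias step to be the conceptual heart: the goal is a bound on $|\tau(F_{ij}) - \tau(\Phi_{ij})|$ that is simultaneously linear in $\epsilon$ and uniform over \emph{every} contaminating distribution, including ones with atoms or heavy tails. Both features rest entirely on the boundedness of the Kendall's tau kernel — precisely the property the sample covariance lacks — so once that observation is in place the estimate is short. The remaining work is bookkeeping: the constant in the U-statistic tail bound must be tracked finely enough that the resulting exponent aligns with the threshold $C > \pi\sqrt{2}$, and the various absolute constants must be combined into the displayed $C$ and $26\pi$ prefactors.
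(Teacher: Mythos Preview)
Your approach is correct and follows the same two-term decomposition (stochastic fluctuation plus population bias) as the paper. The stochastic step is identical: Hoeffding for a bounded order-two $U$-statistic, followed by a union bound over pairs, yielding exactly the stated tail probability. The bias step is where you diverge slightly: the paper invokes an auxiliary lemma giving the precise representation $\brho^K_{ij} = \tfrac{2}{\pi}\sin^{-1}(\brho_{ij}) + R_{ij}$ with $|R_{ij}| \le 12\gamma_{ij} + 17\gamma_{ij}^2$, and then converts this into a bound on $|\sin(\tfrac{\pi}{2}\brho^K_{ij}) - \brho_{ij}|$ via the sine-of-sum formula and Taylor bounds on $\sin$ and $\cos$; this last step is what forces $\epsilon \le 0.02$ (to keep $|R'_{ij}| \le 1$) and produces the loose constant $26\pi$. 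Your route---bound $|\tau(F_{ij}) - \tau(\Phi_{ij})|$ directly from the four-term mixture expansion and the fact that the kernel is in $[-1,1]$, then apply the global $\tfrac{\pi}{2}$-Lipschitz property of $t\mapsto\sin(\tfrac{\pi}{2}t)$---is more elementary, delivers a sharper constant (roughly $4\pi\epsilon$), and does not need the restriction on $\epsilon$. One small bookkeeping point: the max in the statement ranges over all $1 \le i,j \le p$, including the diagonal, so the union should be over $p^2$ pairs rather than $\binom{p}{2}$; the paper handles $i=j$ by a separate (and tighter) argument, but your argument covers that case uniformly and the resulting probability bound is unchanged.
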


\begin{Lemma}\label{MADconsistency}
Under model \eqref{cellContam}, suppose $0<\min_{1\leq i\leq p}\sigma_i \leq \max_{1\leq i\leq p}\sigma_i \leq M_\sigma$, and the maximum contamination error satisfies $\epsilon = \max_{1\leq i\leq p}\epsilon_i \leq\frac{1}{16}$. Let  $c(\sigma_i) = \frac{15}{64\sqrt{2\pi}\sigma_i}\exp\left(-\frac{(1.1\sigma_i+0.5)^2}{2\sigma_i^2}\right)$, and suppose $C' > \frac{1}{\Phi^{-1}(0.75) \min_{1 \le i \le p} c(\sigma_i) \sqrt{2}}$. Also suppose $\Phi^{-1}(0.75)C'\sqrt{\frac{\log p}{n}}<1$. Then with probability at least $1 - 6p^{-\{2[\Phi^{-1}(0.75)]^2 C'^2 \min_{1 \le i \le p} c^2(\sigma_i) - 1\}}$, we have
\begin{equation*}
\max_{1\leq i\leq p}|\hsigma_i - \sigma_i|\leq C'\sqrt{\frac{\log p}{n}} + 7.2M_\sigma\epsilon.
\end{equation*}
\end{Lemma}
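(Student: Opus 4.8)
The plan is to prove the MAD concentration bound for each coordinate $i$ separately, then take a union bound over $1 \le i \le p$. Fix $i$ and recall that $\hsigma_i = [\Phi^{-1}(0.75)]^{-1} \hat d_i$, where $\hat d_i$ is the sample MAD. The target $\sigma_i$ satisfies $\sigma_i = [\Phi^{-1}(0.75)]^{-1} d(\Phi_{0,\sigma_i})$, the population MAD of the \emph{uncontaminated} $N(\mu_i, \sigma_i^2)$ marginal. So it suffices to bound $|\hat d_i - d(\Phi_{0,\sigma_i})|$ by $\Phi^{-1}(0.75) \cdot (C' \sqrt{\log p / n} + 7.2 M_\sigma \epsilon)$, after which we multiply through by $[\Phi^{-1}(0.75)]^{-1}$. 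The natural decomposition is $|\hat d_i - d(\Phi_{0,\sigma_i})| \le |\hat d_i - d(F_i)| + |d(F_i) - d(\Phi_{0,\sigma_i})|$, where $F_i$ is the (contaminated) population cdf of the observed $X_{ki}$ under model~\eqref{cellContam}, i.e. $F_i = (1-\epsilon_i)\Phi_{\mu_i,\sigma_i} + \epsilon_i H_i$ for the marginal contaminating cdf $H_i$. This splits the error into a \emph{stochastic} term (sample MAD vs. its population target at the contaminated law — Lemma~\ref{MADterm2} in the paper, handled by empirical-process/DKW-type arguments on the two nested quantile operations in~\eqref{EqnSmiley}) and a \emph{bias} term (population MAD of the contaminated law vs. population MAD of the clean law — Lemma~\ref{MADterm1}, which should be where the $7.2 M_\sigma \epsilon$ comes from).

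First I would handle the stochastic term. The sample MAD involves two layers of order statistics: the inner sample median $\hat m_i = \median_k X_{ki}$, and the outer median of $W_{ki} = |X_{ki} - \hat m_i|$. The standard device is to control both the empirical cdf of the $X_{ki}$'s and the empirical cdf of $|X_{ki} - m|$ uniformly over a neighborhood of $m = m(F_i)$, using a uniform concentration inequality (DKW / VC) so that $\sup_x |\hat F_i(x) - F_i(x)| \le \sqrt{\log p / n}$ up to constants with the stated probability, and likewise for the shifted-absolute-value class. Inverting these near the $0.5$ quantile, and using that $F_i$ has a density bounded below near its median and near $d(F_i)$ — this is exactly where the factor $c(\sigma_i)$ enters, as a lower bound on the relevant density values for the near-normal law, valid because $\epsilon_i \le 1/16$ keeps $F_i$ close to $\Phi_{\mu_i,\sigma_i}$ so the Gaussian density lower bound survives — converts the cdf deviation into a quantile deviation of order $\sqrt{\log p/n} / c(\sigma_i)$, i.e. at most $C' \sqrt{\log p/n}$ up to the Gaussian rescaling constant $\Phi^{-1}(0.75)$. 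The condition $\Phi^{-1}(0.75) C' \sqrt{\log p/n} < 1$ is what guarantees the quantile neighborhood stays inside the region where the density lower bound $c(\sigma_i)$ is valid, closing the argument self-consistently.

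Next I would handle the deterministic bias term $|d(F_i) - d(\Phi_{0,\sigma_i})|$. Since $F_i$ is within total variation distance $\epsilon_i$ of the clean Gaussian law (mixture weight), both the population median and the population MAD of $F_i$ shift by $O(\epsilon_i / (\text{density}))$ relative to the Gaussian; again invoking a lower bound on the Gaussian density near the relevant quantiles (and using $\max_i \sigma_i \le M_\sigma$ to make the density lower bound uniform, which is where the $M_\sigma$ factor and the explicit constant $7.2$ are produced), one gets $|d(F_i) - d(\Phi_{0,\sigma_i})| \le 7.2 M_\sigma \epsilon$ — with $\epsilon = \max_i \epsilon_i$ providing the uniform bound. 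Then $|\hsigma_i - \sigma_i| \le C' \sqrt{\log p/n} + 7.2 M_\sigma \epsilon$ for each $i$ on the corresponding good event. Finally, taking a union bound over the $p$ coordinates multiplies the failure probability by $p$ — this is absorbed by choosing the exponent $2[\Phi^{-1}(0.75)]^2 C'^2 \min_i c^2(\sigma_i) - 1$, where the ``$-1$'' is exactly the cost of the union bound and the constraint $C' > [\Phi^{-1}(0.75) \min_i c(\sigma_i) \sqrt 2]^{-1}$ is precisely what makes this exponent positive so the probability bound is nontrivial; the constant $6$ in front counts the handful of one-sided DKW events (upper/lower deviations for each of the two nested empirical-cdf controls, plus slack).

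The main obstacle is the stochastic term: the sample MAD is not a simple sample quantile but a quantile of data recentered at another sample quantile, so one must control the \emph{joint} fluctuation of the two nested order-statistic operations and argue that the random recentering $\hat m_i$ only perturbs the outer quantile by a lower-order amount. The clean way is to work on the event $\{\sup_x |\hat F_i(x) - F_i(x)| \le \delta\}$ with $\delta \asymp \sqrt{\log p/n}$, note this forces $|\hat m_i - m(F_i)|$ small, and then show the map $m \mapsto (\text{outer empirical } 0.5\text{-quantile of } |X_{ki} - m|)$ is Lipschitz in $m$ with a controlled constant near $m(F_i)$ — again using the density lower bound $c(\sigma_i)$ — so that substituting $\hat m_i$ for $m(F_i)$ costs only another $O(\delta/c(\sigma_i))$. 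Getting the explicit constants ($15/64$ inside $c(\sigma_i)$, the $7.2$, the $6$) to line up requires careful but routine bookkeeping; I would defer all of that to Appendices~\ref{AppMAD} and~\ref{AppLemMADConsist} and present only the decomposition and the two lemma invocations in the main proof.
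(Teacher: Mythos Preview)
Your decomposition into a stochastic term $|\hat d_i - d(F_i)|$ and a bias term $|d(F_i) - d(\Phi_{\mu_i,\sigma_i})|$, followed by a union bound over $i$, is exactly the paper's route. Two small differences are worth flagging. First, for the stochastic term the paper does \emph{not} use a DKW/uniform empirical-process bound; instead it applies Hoeffding's inequality directly to the four specific indicator sums $\sum_k \1\{X_{ki} \le m(F_i) \pm t/2\}$ and $\sum_k \1\{|X_{ki}-m(F_i)| \le d(F_i) \pm t/2\}$ (Lemmas~\ref{medFact2} and~\ref{MADterm1.0}), which is why exactly six one-sided exponential terms appear and why $c(\sigma_i)$ arises as a lower bound on the four probability increments $F_i(m+t/2)-\tfrac12$, etc. Your DKW approach would work but is less direct and makes the constants harder to pin down. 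Second, your phrasing ``$F_i$ has a density bounded below'' is not quite right: $H_i$ is arbitrary, so $F_i$ need not have a density at all. The paper's argument (Lemma~\ref{MADterm1}) instead lower-bounds each probability increment by the $(1-\epsilon_i)\Phi_{\mu_i,\sigma_i}$ piece of the mixture and applies the mean value theorem to the \emph{Gaussian} part only, which is where the explicit form of $c(\sigma_i)$ comes from. (Also, you have the labels of Lemmas~\ref{MADterm1} and~\ref{MADterm2} swapped, and the bias bound is $4.8\sigma_i\epsilon_i$ at the MAD scale, becoming $7.2M_\sigma\epsilon$ only after dividing by $\Phi^{-1}(0.75)$.)
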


Using the fact that
\[ABC - abc = (A-a)(B-b)(C-c) + aC(B-b) + Ab(C-c) + Bc(A-a),\]
we can decompose $|\hsigma_i\hsigma_j\sin(\frac{\pi}{2}\boldr^K_{ij}) - \bSigma_{ij}^*| = |\hsigma_i\hsigma_j\sin(\frac{\pi}{2}\boldr^K_{ij}) - \sigma_i\sigma_j\brho_{ij}|$ by the triangle inequality, as follows:
\begin{align*}
\bigg|\hsigma_i\hsigma_j\sin\Big(\frac{\pi}{2}\boldr^K_{ij}\Big) - \sigma_i\sigma_j\brho_{ij}\bigg| & \le |\hsigma_i-\sigma_i| |\hsigma_j-\sigma_j| \bigg|\sin\Big(\frac{\pi}{2}\boldr^K_{ij}\Big)-\brho_{ij}\bigg| + \bigg|\sigma_i\sin\Big(\frac{\pi}{2}\htauij\Big)\bigg| |\hsigma_j-\sigma_j| \\
&\qquad+ |\hsigma_i\sigma_j| \bigg|\sin\Big(\frac{\pi}{2}\htauij\Big)-\brho_{ij}\bigg| + |\hsigma_j\rho_{ij}| |\hsigma_i-\sigma_i| \\
& \stackrel{(i)}{\leq} |\hsigma_i-\sigma_i| |\hsigma_j-\sigma_j| \bigg|\sin\Big(\frac{\pi}{2}\htauij\Big)-\brho_{ij}\bigg| + \sigma_i |\hsigma_j-\sigma_j| \\
&\qquad+ |\hsigma_i\sigma_j| \bigg|\sin\Big(\frac{\pi}{2}\htauij\Big)-\brho_{ij}\bigg| + \hsigma_j |\hsigma_i-\sigma_i| \\
&\leq |\hsigma_i-\sigma_i| |\hsigma_j-\sigma_j| \bigg|\sin\Big(\frac{\pi}{2}\htauij\Big)-\brho_{ij}\bigg| + \sigma_i |\hsigma_j-\sigma_j| \\
&\qquad+ (|\hsigma_i-\sigma_i|+\sigma_i)\sigma_j \bigg|\sin\Big(\frac{\pi}{2}\htauij\Big)-\brho_{ij}\bigg| + (|\hsigma_j-\sigma_j| + \sigma_j) |\hsigma_i-\sigma_i|,
\end{align*}
where $(i)$ uses the facts that $|\sin(x)| \le 1$ for all $x$, and $|\brho_{ij}| \le 1$, since it is a correlation coefficient. Using Lemmas~\ref{kendalltauconsistency} and~\ref{MADconsistency} and the assumption~\eqref{EqnContamScaling}, we obtain the overall upper bound
\begin{align*}
& \left(C \sqrt{\frac{\log p}{n}} + 26\pi \epsilon\right) \left(C' \sqrt{\frac{\log p}{n}} + 7.2 M_\sigma \epsilon\right)^2 + M_\sigma \left(C' \sqrt{\frac{\log p}{n}} + 7.2 M_\sigma \epsilon\right) \\
& \qquad + \left(M_\sigma + C' \sqrt{\frac{\log p}{n}} + 7.2 M_\sigma \epsilon \right)\left\{ \left(C \sqrt{\frac{\log p}{n}} + 26\pi \epsilon\right)M_\sigma + \left(C' \sqrt{\frac{\log p}{n}} + 7.2 M_\sigma \epsilon\right)\right\} \\
&\le \left(M_\sigma(M_\sigma + 1) + 1\right)\left(C \sqrt{\frac{\log p}{n}} + 26\pi\epsilon\right) + \left(2M_\sigma + 1\right)\left(C' \sqrt{\frac{\log p}{n}} + 7.2M_\sigma \epsilon\right),
\end{align*}
implying inequality~\eqref{EqnCovKendall}.

\subsection{Proof of Theorem~\ref{ThmRobustCovS}}
\label{SecThmRobustCovS}

The proof is based on Lemma~\ref{spearmanrhoconsistency}, which gives an error bound for $2\sin(\frac{\pi}{6}\hrhosij)$, and Lemma~\ref{MADconsistency}, which gives an error bound for $\hsigma_i$. Note that we require the bound $\epsilon \le 0.01$ on the level of contamination in Lemma~\ref{spearmanrhoconsistency}, but the requirement could again be relaxed with a more refined proof technique. The proof of Lemma~\ref{spearmanrhoconsistency} is contained in Appendix~\ref{AppLemSpearman}.

\begin{Lemma}\label{spearmanrhoconsistency}
Under model \eqref{cellContam}, let $\epsilon = \max_{1\leq i\leq p}\epsilon_i \le 0.01$. Suppose $C > 8\pi$ and the sample size satisfies $n\geq \max\left\{15, \; \frac{16\pi^2}{C^2\log p}\right\}$. Then
\begin{equation}
\max_{1\leq i, j\leq p}\bigg|2\sin\Big(\frac{\pi}{6}\hrhosij\Big) - \brho_{ij}^S\bigg|\leq \frac{5C}{2}\sqrt{\frac{\log p}{n}} + 51\pi \epsilon, \label{rhoconsistency}
\end{equation}
with probability at least $1 - 2p^{-\left\{\frac{C^2}{32\pi^2}-2\right\}}$.
%{\\\blue Under model \eqref{cellContam}, let $\epsilon = \max_{1\leq i\leq p}\epsilon_i < 1$.  Suppose that $16\epsilon + 86\epsilon^2 + 118\epsilon^3\leq 5\sqrt{(\log p)/n}$,  then for $n\geq 1/(6\log p)$ and $n$ divisible by 3, with probability at least $1 - 2p^{-2}$, 
%\begin{equation}
%\max_{1\leq i, j\leq p}\bigg|2\sin\Big(\frac{\pi}{6}\hrhosij\Big) - \rho_{ij}^S\bigg|\leq 30\pi\sqrt{\frac{\log p}{n}}.
%\end{equation}
%}
\end{Lemma}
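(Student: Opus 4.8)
The plan is to follow the template of Lemma~\ref{kendalltauconsistency}, inserting one preliminary step that copes with the fact that $\hrhosij$ is a rank statistic rather than a U-statistic. First I would rewrite the centered ranks as sums of signs: since the ranks are assumed distinct, $\rank(X_{ki}) - \tfrac{n+1}{2} = \tfrac12\sum_{\ell\ne k}\sign(X_{ki}-X_{\ell i})$, and substituting this into the definition~\eqref{rhos} (together with the exact value $\tfrac{n(n^2-1)}{12}$ of the sum-of-squares in the denominator) yields the classical algebraic identity
\[
\hrhosij = \frac{n-2}{n+1}\,\tilde U^S_{ij} + \frac{3}{n+1}\,\boldr^K_{ij},
\]
where $\tilde U^S_{ij}$ is the third-order U-statistic obtained by averaging the kernel $\sign(X_{ki}-X_{\ell i})\sign(X_{kj}-X_{mj})$ over all distinct triples $(k,\ell,m)$. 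The statistic $\tilde U^S_{ij}$ is a genuine U-statistic with kernel bounded in $[-1,1]$, and its expectation equals the population quantity $3E[\sign(X_{1i}-X_{2i})\sign(X_{1j}-X_{3j})]$ appearing in~\eqref{EqnRhoSpop}. Since $\tilde U^S_{ij}$ and $\boldr^K_{ij}$ both lie in $[-1,1]$, the identity immediately gives $\big|\hrhosij - \tilde U^S_{ij}\big| \le \tfrac{6}{n+1}$, an approximation error of order $1/n$.

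Next I would apply Hoeffding's inequality for U-statistics: because the kernel is bounded and has order three, $P\big(|\tilde U^S_{ij} - E\tilde U^S_{ij}| \ge t\big) \le 2\exp(-c\,n\,t^2)$ for a numerical constant $c$, and a union bound over the at most $p^2$ pairs $(i,j)$, with $t$ chosen as a suitable multiple of $\sqrt{\log p/n}$, produces the stated failure probability $2p^{-\{C^2/(32\pi^2)-2\}}$. Here the hypothesis $n\ge 15$ keeps $\lfloor n/3\rfloor$ comparable to $n$, while the hypothesis $n\ge 16\pi^2/(C^2\log p)$ is exactly what allows the $1/n$ term from the previous step to be absorbed into the $\sqrt{\log p/n}$ term, since it forces $\tfrac1n \le \tfrac{C}{4\pi}\sqrt{\tfrac{\log p}{n}}$. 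I would then bound the contamination bias $\big|E\tilde U^S_{ij} - \vartheta_{ij}\big|$, where $\vartheta_{ij}$ denotes the same sign-expectation evaluated under the clean normal law $N(\bmu,\bSigma^*)$: conditioning on the Bernoulli variables governing the four relevant cells (coordinate $i$ in two rows, coordinate $j$ in two rows), the sign expectations coincide with the Gaussian ones except on an event of probability at most $2\epsilon_i+2\epsilon_j\le 4\epsilon$, on which the bounded integrand deviates by at most a constant; hence $\big|E\tilde U^S_{ij} - \vartheta_{ij}\big| \le c'\epsilon$.

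Finally, I would transfer the three error contributions through the map $x\mapsto 2\sin(\tfrac{\pi}{6}x)$, which is Lipschitz with constant $\tfrac{\pi}{3}$ on $[-1,1]$, using the Gaussian identity $2\sin(\tfrac{\pi}{6}\vartheta_{ij}) = \brho_{ij}$. Combining the rank-to-U-statistic gap ($O(1/n)$), the U-statistic deviation ($O(\sqrt{\log p/n})$ on the event above), and the contamination bias ($O(\epsilon)$) by the triangle inequality, and tracking the Lipschitz factor and numerical constants through each step, gives the bound~\eqref{rhoconsistency}; the order-three U-statistic and the $2\sin$ Lipschitz factor are what account for the $32\pi^2$ in the exponent and the $\tfrac{5C}{2}$ in the rate (relative to the $\pi^2$ and $C$ of the Kendall bound), with $\epsilon\le 0.01$ absorbed into the various slacks. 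The only genuinely new ingredient compared to Lemma~\ref{kendalltauconsistency} is the algebraic reduction in the first step, and I expect that identity — together with the bookkeeping needed to play the resulting $O(1/n)$ remainder off against the sample-size hypothesis — to be the main point requiring care; the concentration, union-bound, contamination-bias, and Lipschitz-transfer steps are essentially identical to the Kendall's tau argument.
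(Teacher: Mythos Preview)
Your plan is essentially the paper's proof. The paper likewise invokes Hoeffding's identity $r^S_{ij} = \tfrac{n-2}{n+1}U_{ij} + \tfrac{3}{n+1}r^K_{ij}$ (its Lemma~\ref{rhodecomp}), applies Hoeffding's inequality to the third-order $U$-statistic, absorbs the $O(1/n)$ remainder using the hypothesis $n\ge 16\pi^2/(C^2\log p)$, and controls the contamination bias via its Lemma~\ref{exprho} --- of which your direct conditioning argument is a streamlined version.

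One small slip to fix: with the decomposition coefficients as you state them, the symmetrized kernel of $\tilde U^S_{ij}$ carries a factor $3$ and is bounded in $[-3,3]$, not $[-1,1]$; this is precisely why the Hoeffding denominator is $6^2$ and the exponent acquires the $32\pi^2$, and also why your assertion that $E\tilde U^S_{ij}=3E[\sign\cdot\sign]$ is consistent. The paper also organizes the final transfer slightly differently, expanding $2\sin(\sin^{-1}(\rho/2)+R')$ by the addition formula rather than using a bare Lipschitz bound (and this is where $n\ge 15$ and $\epsilon\le 0.01$ actually enter, to force $|R'|\le 1$ so the Taylor estimates on $\sin$ and $\cos$ apply), but the difference is cosmetic.
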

Using a similar decomposition as in the proof of Theorem~\ref{ThmRobustCovK}, we have
\begin{align*}
&\bigg|2\hsigma_i\hsigma_j\sin\Big(\frac{\pi}{6}\hrhosij\Big) - \sigma_i\sigma_j\brho_{ij}\bigg| \\
&\leq |\hsigma_i-\sigma_i| |\hsigma_j-\sigma_j| \bigg|2\sin\Big(\frac{\pi}{6}\hrhosij\Big)-\brho_{ij}\bigg| + \sigma_i |\hsigma_j-\sigma_j| \\
&\qquad+ (|\hsigma_i-\sigma_i|+\sigma_i)\sigma_j \bigg|2\sin\Big(\frac{\pi}{6}\hrhosij\Big)-\brho_{ij}\bigg| + (|\hsigma_j-\sigma_j| + \sigma_j) |\hsigma_i-\sigma_i|.
\end{align*}
Using Lemmas~\ref{MADconsistency} and \ref{spearmanrhoconsistency}, we then obtain the overall upper bound
\begin{align*}
& \left(\frac{5C}{2}\sqrt{\frac{\log p}{n}} + 51\pi \epsilon\right) \left(C' \sqrt{\frac{\log p}{n}} + 7.2 M_\sigma \epsilon\right)^2 + M_\sigma \left(C' \sqrt{\frac{\log p}{n}} + 7.2 M_\sigma \epsilon\right) \\
& \qquad + \left(M_\sigma + C' \sqrt{\frac{\log p}{n}} + 7.2 M_\sigma \epsilon \right) \left\{ M_\sigma\left(\frac{5C}{2}\sqrt{\frac{\log p}{n}} + 51\pi \epsilon\right) + \left(C' \sqrt{\frac{\log p}{n}} + 7.2 M_\sigma \epsilon\right)\right\} \\
&\le (M_\sigma(M_\sigma + 1) + 1) \left(\frac{5C}{2} \sqrt{\frac{\log p}{n}} + 51 \pi \epsilon\right) + (2M_\sigma + 1) \left(C' \sqrt{\frac{\log p}{n}} + 7.2 M_\sigma \epsilon\right),
\end{align*}
which is easily simplified to obtain the prescribed bound.

%\subsection{Proof of Theorem~\ref{ThmRobustCovQ}}

%We have the following lemma:

%\begin{Lemma}
%\label{LemQuadConsistency}
%Under model~\eqref{cellContam}, w.h.p.,
%\begin{equation*}
%\max_{1 \le i,j \le p} \left|\sin\left(\frac{\pi}{2} r_{ij}^Q\right) - \rho_{ij}^Q\right| \le C \sqrt{\frac{\log p}{n}}.
%\end{equation*}
%\end{Lemma}

\subsection{Proof of Theorem~\ref{ThmRobustCLIME}}
\label{SecThmRobustCLIME}

Clearly, it suffices to prove the elementwise deviation bound for the unsymmetrized matrix $\hOmega$. We begin with the following general lemma, relating deviation bounds in the covariance matrix estimator $\hSigma$ to the error of the CLIME estimator. A version of the following result appears in \cite{Cai2011}, but we include the relatively short proof for the sake of completeness.
%{\red Do we still want to include the following lemma and its proof, since it is given in Cai 2011?}
\begin{Lemma}
\label{LemCLIMEerror}
Suppose $\bOmega^* \in\cU(q, s_0(p), M)$.  If $\hOmega$ is the output of the CLIME estimator~\eqref{CLIME}, where the regularization parameter satisfies $\lambda \geq M \|\hSigma-\bSigma^*\|_\infty$, then $\|\hOmega-\bOmega^*\|_\infty \leq 4\|\bOmega^*\|_{L_1}\lambda$. 
\end{Lemma}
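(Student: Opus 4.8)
The plan is to run the column-decoupled argument of \cite{Cai2011}. The CLIME program~\eqref{CLIME} separates across columns: $\|\hSigma\bOmega - \I\|_\infty \le \lambda$ holds if and only if $\|\hSigma\bomega_j - \e_j\|_\infty \le \lambda$ for each $j$ (with $\e_j$ the $j$th standard basis vector and $\bomega_j$ the $j$th column of $\bOmega$), while $\|\bOmega\|_1 = \sum_{j=1}^p \|\bomega_j\|_1$. Hence it suffices to fix $j$, write $\bomega^*_j$ and $\hat{\bomega}_j$ for the $j$th columns of $\bOmega^*$ and $\hOmega$, show that $\|\hat{\bomega}_j - \bomega^*_j\|_\infty \le 4\|\bOmega^*\|_{L_1}\lambda$, and then maximize over $j$. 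Throughout I use that $\bOmega^* \in \cU(q, s_0(p), M)$ forces $\bOmega^*$ to be symmetric positive definite with $\|\bOmega^*\|_{L_1} \le M$, together with the elementary bounds $\|A v\|_\infty \le \|A\|_\infty \|v\|_1$ for any matrix $A$ and $\|A v\|_\infty \le \|A\|_{L_1}\|v\|_\infty$ for symmetric $A$.

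First I would verify that the population column $\bomega^*_j$ is feasible for the $j$th subprogram. Since $\bSigma^*\bOmega^* = \I$, we have $\hSigma\bomega^*_j - \e_j = (\hSigma - \bSigma^*)\bomega^*_j$, so $\|\hSigma\bomega^*_j - \e_j\|_\infty \le \|\hSigma - \bSigma^*\|_\infty\|\bomega^*_j\|_1 \le \|\hSigma - \bSigma^*\|_\infty\|\bOmega^*\|_{L_1} \le M\|\hSigma - \bSigma^*\|_\infty \le \lambda$ by the hypothesis on $\lambda$. Optimality of $\hat{\bomega}_j$ then gives $\|\hat{\bomega}_j\|_1 \le \|\bomega^*_j\|_1$, whence $\|\hat{\bomega}_j - \bomega^*_j\|_1 \le 2\|\bomega^*_j\|_1 \le 2\|\bOmega^*\|_{L_1}$; and since both $\hat{\bomega}_j$ and $\bomega^*_j$ are feasible, $\|\hSigma(\hat{\bomega}_j - \bomega^*_j)\|_\infty \le \|\hSigma\hat{\bomega}_j - \e_j\|_\infty + \|\hSigma\bomega^*_j - \e_j\|_\infty \le 2\lambda$.

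The crux is the identity
\[
\hat{\bomega}_j - \bomega^*_j = \bOmega^*\bSigma^*(\hat{\bomega}_j - \bomega^*_j),
\]
again from $\bOmega^*\bSigma^* = \I$, which by symmetry of $\bOmega^*$ yields $\|\hat{\bomega}_j - \bomega^*_j\|_\infty \le \|\bOmega^*\|_{L_1}\|\bSigma^*(\hat{\bomega}_j - \bomega^*_j)\|_\infty$. Splitting $\bSigma^* = \hSigma + (\bSigma^* - \hSigma)$ and using the two bounds from the previous paragraph, $\|\bSigma^*(\hat{\bomega}_j - \bomega^*_j)\|_\infty \le \|\hSigma(\hat{\bomega}_j - \bomega^*_j)\|_\infty + \|\hSigma - \bSigma^*\|_\infty\|\hat{\bomega}_j - \bomega^*_j\|_1 \le 2\lambda + 2\|\bOmega^*\|_{L_1}\|\hSigma - \bSigma^*\|_\infty \le 4\lambda$, using once more that $M\|\hSigma - \bSigma^*\|_\infty \le \lambda$ and $\|\bOmega^*\|_{L_1} \le M$. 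Combining gives $\|\hat{\bomega}_j - \bomega^*_j\|_\infty \le 4\|\bOmega^*\|_{L_1}\lambda$, and taking the maximum over $j$ completes the proof. The argument is routine; the only real subtlety is keeping track of the three matrix ``norms'' in play ($\|\cdot\|_\infty$, $\|\cdot\|_1$, and $\|\cdot\|_{L_1}$) and, in particular, decoupling the program column-by-column before invoking optimality---this is precisely what lets one replace the uncontrolled quantity $\|\hOmega\|_{L_1}$ in the cross term by the bounded $2\|\bOmega^*\|_{L_1}$.
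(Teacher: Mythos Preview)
Your proof is correct and follows essentially the same approach as the paper's own proof: verify feasibility of $\bOmega^*$ (columnwise), use optimality to bound $\|\hat{\bomega}_j\|_1$, get $\|\hSigma(\hat{\bomega}_j - \bomega^*_j)\|_\infty \le 2\lambda$, split $\bSigma^* = \hSigma + (\bSigma^* - \hSigma)$ to reach $4\lambda$, and then apply $\bOmega^*\bSigma^* = \I$ together with symmetry of $\bOmega^*$. The only cosmetic difference is that you carry the columnwise decoupling through the entire argument, whereas the paper decouples to obtain $\|\hOmega\|_{L_1} \le \|\bOmega^*\|_{L_1}$ and then reverts to matrix form; the substance is identical.
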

\begin{proof}
We have
\begin{align}
\|\I - \hSigma\bOmega^*\|_\infty = \|(\hSigma-\bSigma^*)\bOmega^*\|_\infty \leq \|\bOmega^*\|_{L_1}\|\hSigma-\bSigma^*\|_\infty \leq\lambda,
\end{align}
the first inequality is due to $\|\bA\B\|_\infty \leq\|\bA\|_\infty\|\B\|_{L_1}$, and the second inequality follows by assumption.  Then
\[\|\hSigma(\hOmega-\bOmega^*)\|_\infty \leq \|\hSigma\hOmega-\I\|_\infty + \|\I - \hSigma\bOmega^*\|_\infty \leq 2\lambda.\]

For $1\leq i\leq p$, let $\e_i$ be the canonical vector with 1 in the $i^{th}$ coordinate and 0 in all other coordinates, and let $\hbeta_i$ be the solution of the following convex optimization problem:
\[\min_{\bbeta\in\R^p} \|\bbeta\|_1 \qquad\text{subject to}\qquad\|\hSigma\bbeta - \e_i\|_\infty\leq\lambda.\]
Note that $\hOmega = (\hbeta_1, \ldots, \hbeta_p)$ (cf. Lemma 1 in \cite{Cai2011}). It follows that $\|\hbeta_i\|_1 \leq \|\bOmega^*\|_{L_1}$, for $1\leq i\leq p$, so $\|\hOmega\|_{L_1} \leq \|\bOmega^*\|_{L_1}$. Hence,
\begin{align*}
\|\bSigma^*(\hOmega-\bOmega^*)\|_\infty &\leq \|\hSigma(\hOmega-\bOmega^*)\|_\infty + \|(\hSigma-\bSigma^*)(\hOmega-\bOmega^*)\|_\infty \\
&\leq 2\lambda + \|\hOmega-\bOmega^*\|_{L_1}\|\hSigma-\bSigma^*\|_\infty \\
&\leq 2\lambda + \|\hOmega\|_{L_1}\|\hSigma-\bSigma^*\|_\infty + \|\bOmega^*\|_{L_1}\|\hSigma-\bSigma^*\|_\infty\\
&\leq 4\lambda.
\end{align*}
Finally,
\[\|\hOmega-\bOmega^*\|_\infty = \|\bOmega^*\bSigma^*(\hOmega-\bOmega^*)\|_\infty \leq \|\bOmega^*\|_{L_1}\|\bSigma^*(\hOmega-\bOmega^*)\|_\infty \leq 4\|\bOmega^*\|_{L_1}\lambda.\]
\end{proof}

Combining Lemma~\ref{LemCLIMEerror} with the result of Theorem~\ref{ThmRobustCovK}, we obtain the desired result.

%%%%%

\subsection{Proof of Theorem~\ref{ThmRobustGLasso}}
\label{SecThmRobustGLasso}

Our proof is based on the following result:
\begin{Lemma} [Theorem 1 in \cite{RavEtal11}]
\label{LemGLassoBd}
Suppose $\bOmega^*$ satisfies the incoherence condition~\eqref{EqnIncoh}, and that for all $1 \le i, j \le p$, the tail condition
\begin{equation}
\label{EqnTail}
P\left(|\hSigma_{ij} - \bSigma^*_{ij}| \ge \delta\right) \le \frac{1}{f(n,\delta)}, \qquad \forall \delta > 0,
\end{equation}
holds, for some function $f$ that is monotonically increasing in $n$. Also suppose the sample size satisfies
\begin{equation*}
n > \bar{n}_f\left(\frac{1}{6(1+8/\alpha) k \max\{\kappa_{\Sigma^*} \kappa_{\Gamma^*}, \kappa^3_{\Sigma^*} \kappa^2_{\Gamma^*}\}}, \; p^{\tau}\right),
\end{equation*} 
where
\begin{equation*}
\bar{n}_f(\delta; r) = \argmax\{n: f(n, \delta) \le r\}, \quad \text{and} \quad \bar{\delta}_f(n; r) := \argmax\{\delta: f(n, \delta) \le r\}.
\end{equation*}
Then with probability at least $1 - p^{2 - \tau}$, for the choice $\lambda = \frac{8}{\alpha} \bar{\delta}_f(n, p^\tau)$, the GLasso estimator satisfies
\begin{equation*}
\|\hOmega - \bOmega^*\|_\infty \le 2 \kappa_{\Gamma^*} \left(1 + \frac{8}{\alpha}\right) \bar{\delta}_f(n, p^\tau),
\end{equation*}
and
\begin{equation*}
\supp(\hOmega) \subseteq \supp(\bOmega^*).
\end{equation*}
\end{Lemma}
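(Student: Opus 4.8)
The plan is to reproduce the \emph{primal-dual witness} (PDW) argument of \cite{RavEtal11}: one constructs a candidate solution supported on the true edge set $S = \supp(\bOmega^*)$, verifies that it satisfies the full optimality system with strict dual feasibility, and then invokes uniqueness to conclude it equals $\hOmega$. First I would convert the tail condition~\eqref{EqnTail} into a deterministic event. Writing $W := \hSigma - \bSigma^*$ and $\delta^* := \bar\delta_f(n, p^\tau)$, I apply the tail bound entrywise and union-bound over the at most $p^2$ entries; by the definition of $\bar\delta_f$ this gives $1/f(n,\delta^*) \le p^{-\tau}$, so the event $\mathcal{E} := \{\|W\|_\infty \le \delta^*\}$ holds with probability at least $1 - p^{2-\tau}$. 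Every subsequent step is carried out on $\mathcal{E}$.

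Next comes the PDW construction. Since the log-determinant barrier makes the GLasso objective strictly convex on the positive-definite cone, the minimizer $\hOmega$ is unique and characterized by the stationarity condition $\hSigma - \hOmega^{-1} + \lambda \hat Z = 0$, where $\hat Z$ is a subgradient of $\|\cdot\|_{1,\text{off}}$, namely $\hat Z_{ii}=0$, $\hat Z_{ij} = \sign(\hOmega_{ij})$ when $\hOmega_{ij}\neq 0$, and $|\hat Z_{ij}|\le 1$ otherwise. I would define the oracle estimator
\[\tOmega := \argmin_{\bOmega \succ 0,\, \bOmega_{S^c}=0}\big\{\tr(\hSigma\bOmega) - \log\det\bOmega + \lambda\|\bOmega\|_{1,\text{off}}\big\},\]
read off $\tilde Z_S$ from its stationarity condition, and then \emph{define} $\tilde Z_{S^c}$ so that $\hSigma - \tOmega^{-1} + \lambda\tilde Z = 0$ holds on all of $S^c$ as well. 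If I can establish strict dual feasibility $\|\tilde Z_{S^c}\|_\infty < 1$, then $(\tOmega,\tilde Z)$ solves the full KKT system and uniqueness forces $\hOmega = \tOmega$; in particular $\supp(\hOmega)\subseteq S$.

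The analytic heart is a Taylor expansion of the matrix-inverse map. Setting $\Delta := \tOmega - \bOmega^*$ (supported on $S$), I expand
\[\tOmega^{-1} = (\bOmega^*)^{-1} - (\bOmega^*)^{-1}\Delta(\bOmega^*)^{-1} + R(\Delta) = \bSigma^* - \bSigma^*\Delta\bSigma^* + R(\Delta),\]
so in vectorized form the stationarity condition reads $\bGamma^*\,\mathrm{vec}(\Delta) = \mathrm{vec}\big(W - \lambda\tilde Z + R(\Delta)\big)$ with $\bGamma^* = \bSigma^*\otimes\bSigma^*$. Restricting to the rows indexed by $S$ and inverting $\bGamma^*_{SS}$ produces a self-consistent equation for $\Delta_S$, whose $\ell_\infty$-bound is $\|\Delta\|_\infty = \|\Delta_S\|_\infty \le \kappa_{\Gamma^*}\big(\|W\|_\infty + \lambda + \|R(\Delta)\|_\infty\big)$. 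The remainder is quadratic: a standard estimate yields $\|R(\Delta)\|_\infty \le \tfrac{3}{2}\,k\,\kappa_{\Sigma^*}^3\,\|\Delta\|_\infty^2$ provided $\|\Delta\|_\infty \le (3\kappa_{\Sigma^*}k)^{-1}$. I would close this via a Brouwer fixed-point (or continuity) argument on the map defining $\tOmega$, using that the sample-size lower bound $n > \bar n_f(\cdot, p^\tau)$ forces $\delta^*$, and hence $\|W\|_\infty + \lambda$, to be small enough that the quadratic term is dominated; this delivers the clean bound $\|\Delta\|_\infty \le 2\kappa_{\Gamma^*}\big(\|W\|_\infty + \lambda\big)$.

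Finally, strict dual feasibility: substituting the expansion into the $S^c$-block and solving for $\tilde Z_{S^c}$, I bound $\|\tilde Z_{S^c}\|_\infty$ by the incoherence contribution $1-\alpha$ from~\eqref{EqnIncoh} plus error terms proportional to $\|W\|_\infty$ and $\|R(\Delta)\|_\infty$, each scaled by $\alpha^{-1}$. The choice $\lambda = \tfrac{8}{\alpha}\delta^*$ supplies exactly the slack needed for these error terms to sum to at most $\alpha/2$, giving $\|\tilde Z_{S^c}\|_\infty \le 1-\alpha/2 < 1$. Combining with $\|W\|_\infty + \lambda \le (1+8/\alpha)\delta^*$ in the primal bound yields $\|\hOmega - \bOmega^*\|_\infty \le 2\kappa_{\Gamma^*}(1+8/\alpha)\bar\delta_f(n,p^\tau)$, as claimed. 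The hard part will be the \emph{coupled} control of $R(\Delta)$ and dual feasibility: the fixed-point bound on $\Delta_S$ and the strict feasibility estimate must hold simultaneously, and verifying that the precise sample-size scaling through $6(1+8/\alpha)k\max\{\kappa_{\Sigma^*}\kappa_{\Gamma^*}, \kappa_{\Sigma^*}^3\kappa_{\Gamma^*}^2\}$ keeps $\|\Delta\|_\infty$ inside the radius where the quadratic remainder estimate is valid is the delicate interlocking step.
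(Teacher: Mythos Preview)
Your sketch is a faithful outline of the primal-dual witness argument from \cite{RavEtal11}, which is exactly the source the paper invokes here: the paper does not prove this lemma at all but simply cites it as Theorem~1 of \cite{RavEtal11} and uses it as a black box. So your approach coincides with that of the original reference, and there is nothing further to compare against in the present paper.
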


Inspecting the proofs of the technical lemmas employed in proving Theorem~\ref{ThmRobustCovK}, we may see that inequality~\eqref{EqnTail} holds with the function $f(n, \delta) = c_1 \exp(c_2 n (\delta - c_0\epsilon)^2)$, defined for $\delta > c_0 \epsilon$, where $c_0, c_1$, and $c_2$ are appropriately chosen constants. An easy calculation shows that
\begin{equation*}
\bar{\delta}_f(n, r) = c_0 \epsilon + \sqrt{\frac{1}{c_2 n} \log\left(\frac{r}{c_1}\right)},
\end{equation*}
so
\begin{equation*}
\bar{\delta}_f(n, p^\tau) = c_0 \epsilon + C_1 \sqrt{\frac{\tau \log p}{n}}.
\end{equation*}
Similarly, we may easily verify that
\begin{equation*}
\quad \bar{n}_f(\delta, p^\tau) = C_2 \frac{\tau \log p}{(\delta - c_0\epsilon)^2}.
\end{equation*}
Lemma~\ref{LemGLassoBd} then implies that the desired conclusions.

%%%%

\subsection{Proof of Theorem~\ref{ThmGLassoBreak}}
\label{SecThmGLassoBreak}

Note that $\cSigma$ is the projection of the robust covariance estimator $\hSigma$ onto the positive semidefinite cone, where the distance is measured in the elementwise $\ell_\infty$-norm. Furthermore, note that
\begin{equation*}
\|\cSigma - \hSigma\|_\infty \le \|\bSigma^* - \hSigma\|_\infty,
\end{equation*}
since $\bSigma^* \succeq 0$. Hence,
\begin{equation}
\label{EqnSigmaTriangle}
\|\cSigma - \bSigma^*\|_\infty \le \|\cSigma - \hSigma\|_\infty + \|\hSigma - \bSigma^*\|_\infty \le 2 \|\hSigma - \bSigma^*\|_\infty.
\end{equation}
This implies that the bound~\eqref{EqnTail} in Lemma~\ref{LemGLassoBd} holds with $\hSigma$ replaced by $\cSigma$, and $f(n,\delta)$ replaced by $f(n, \delta/2)$. Proceeding as in the proof of Theorem~\ref{ThmRobustGLasso} with these minor modifications, we arrive at the bound~\eqref{EqnProjGLassoBound}.

Turning to the derivation of the breakdown point, note that by Theorem 1 of \cite{Croux2014}, we have
\begin{equation}
\label{EqnOmegaSigma}
\epsilon_n(\cOmega(\X), \X) \ge \epsilon_n^+(\cSigma(\X), \X).
\end{equation}
We first show that
\begin{equation}
\label{EqnCheckHat}
\epsilon_n^+(\cSigma(\X), \X) \geq 50\%.
\end{equation}
Consider the estimator $\cSigma(\X^m)$, based on corrupted data. We have
\begin{align}
\label{EqnCheckOne}
\|\cSigma(\X^m) - \bSigma^*\|_\infty & \le 2 \|\hSigma(\X^m) - \bSigma^*\|_\infty \notag \\
& \le 2 \|\hSigma(\X^m)\|_\infty + 2\|\bSigma^*\|_\infty,
%
%& \le 2 \left(\|\hSigma(X^m)\|_2 + \|\bSigma^*\|_2\right),
\end{align}
where the first inequality follows from the bound~\eqref{EqnSigmaTriangle}, and the second inequality comes from the triangle inequality. Furthermore, note that since $\cSigma(\X^m) \succeq 0$ by construction, we have
\begin{equation}
\label{EqnCheckTwo}
\lambda_1(\cSigma(\X^m)) = \|\cSigma(\X^m)\|_2 \le \|\cSigma(\X^m) - \bSigma^*\|_2 + \|\bSigma^*\|_2 \le p \|\cSigma(\X^m) - \bSigma^*\|_\infty + \|\bSigma^*\|_2,
\end{equation}
where we have used the bound
\begin{equation*}
%\label{EqnMatrixSandwich}
\|\bA\|_\infty \le \|\bA\|_2 \le p \|\bA\|_\infty, \qquad \forall \bA \in \mathbb{R}^{p \times p},
\end{equation*}
in the last inequality. Combining inequalities~\eqref{EqnCheckOne} and~\eqref{EqnCheckTwo}, we then obtain
\begin{equation*}
\lambda_1(\cSigma(\X^m)) \le 2p \|\hSigma(\X^m)\|_\infty + 2p \|\bSigma^*\|_\infty + \|\bSigma^*\|_2,
\end{equation*}
so
\begin{equation}
\label{EqnLambdaDiff}
\left|\lambda_1(\cSigma(\X^m)) - \lambda_1(\cSigma(\X))\right| \le \lambda_1(\cSigma(\X)) + \left(2p \|\hSigma(\X^m)\|_\infty + 2p \|\bSigma^*\|_\infty + \|\bSigma^*\|_2\right).
\end{equation}
Finally, since the correlation estimators are bounded in magnitude by 1, we have
\begin{equation}
\label{EqnHatBound}
\|\hSigma(\X^m)\|_\infty \le \max_{1 \le i, j \le p} \hsigma_i(\X^m) \hsigma_j(\X^m),
\end{equation}
where $\left\{\hsigma_i(\X^m)\right\}_{1 \le i \le p}$ are the robust scale estimators based on $\X^m$, given by the MAD estimators calculated from the corresponding columns. Furthermore, the breakdown point of the MAD is 50\% \citep{Huber1981}, meaning the quantity on the right-hand side of inequality~\eqref{EqnHatBound} is finite when $\frac{m}{n} < 50\%$. Then by inequality~\eqref{EqnLambdaDiff} and the definition of the explosion breakdown point, we conclude that the bound~\eqref{EqnCheckHat} holds. By inequality~\eqref{EqnOmegaSigma}, we therefore have $\epsilon_n(\cOmega(\X), \X) \ge 50\%$, as well.

We now establish that $\epsilon_n(\cOmega(\X), \X) = 50\%$. Note that if we are allowed to corrupt more than 50\% of the entries in each column of the data matrix, the columnwise MAD estimates may be made arbitrarily small (say, smaller than some value $a$); indeed, we may simply replace more than half of the entries in each column by values in $(0, a)$. Consequently, the overall covariance estimator $\hSigma(\X^m)$ will have all entries bounded in magnitude by $[\Phi^{-1}(0.75)]^{-2} a^2$. We claim that the diagonal elements of $\cSigma(\X^m)$ must therefore be bounded in magnitude by $2[\Phi^{-1} (0.75)]^{-2} a^2$. Indeed, note that the matrix $\diag(\hSigma(\X^m))$ is feasible for the projection~\eqref{EqnCovProj}. Hence, we must have
\begin{equation*}
\|\hSigma(\X^m) - \cSigma(\X^m)\|_\infty \le \|\hSigma(\X^m) - \diag(\hSigma(\X^m))\|_\infty \le [\Phi^{-1}(0.75)]^{-2}a^2,
\end{equation*}
implying in particular that
\begin{equation*}
\|\diag(\cSigma(\X^m))\|_\infty \le \|\diag(\hSigma(\X^m))\|_\infty + \|\diag(\hSigma(\X^m)) - \diag(\cSigma(\X^m))\|_\infty \le 2 [\Phi^{-1}(0.75)]^{-2} a^2,
\end{equation*}
as claimed. Now note that the first-order optimality condition for the GLasso is given by
\begin{equation*}
\cSigma(\X^m) - \left(\cOmega(\X^m)\right)^{-1} + \lambda \cdot \sign\{\cOmega(\X^m) - \diag(\cOmega(\X^m))\} = 0,
\end{equation*}
where the $\sign$ function is computed entrywise, omitting the diagonal elements of $\cOmega(\X^m)$. In particular, this implies that the $\diag(\cSigma(\X^m)) = \diag\left\{\left(\cOmega(\X^m)\right)^{-1}\right\}$, so the diagonal elements of $\left(\cOmega(\X^m)\right)^{-1}$ are also bounded in magnitude by $2 [\Phi^{-1}(0.75)]^{-2} a^2$. Hence,
\begin{align*}
\lambda_p\left(\left(\cOmega(\X^m)\right)^{-1}\right) & = \min_{\|\bv\|_2 = 1} \bv^T \left(\left(\cOmega(\X^m)\right)^{-1}\right) \bv \\
& \le \min_{1 \le j \le p} \e_j^T \left(\left(\cOmega(\X^m)\right)^{-1}\right) \e_j \\
& \le \left\|\diag\left\{\left(\cOmega(\X^m)\right)^{-1}\right\} \right\|_\infty \\
& \le 2 [\Phi^{-1}(0.75)]^{-2} a^2,
\end{align*}
where the $\e_j$'s are the canonical basis vectors, and we have used the variational representation of eigenvalues of a Hermitian matrix to show that the minimum eigenvalue is bounded by the minimum diagonal entry.
%\begin{equation*}
%\lambda_p\left(\left(\cOmega(X^m)\right)^{-1}\right) \le a.
%\end{equation*}
This allows us to conclude that
\begin{equation*}
1 = \lambda_p\left(\cOmega(\X^m) \cdot \left(\cOmega(\X^m)\right)^{-1}\right) \le \lambda_1\left(\cOmega(\X^m)\right) \cdot \lambda_p\left(\left(\cOmega(\X^m)\right)^{-1}\right) \le \lambda_1\left(\cOmega(\X^m)\right) \cdot 2 [\Phi^{-1}(0.75)]^{-2} a^2,
\end{equation*}
where we have used the inequality $\lambda_p(\bA \bB) \le \lambda_1(\bA) \lambda_p(\bB)$, for $\bA, \bB \succeq 0$, in the first inequality \citep{Zha11}. Hence,
\begin{equation*}
\lambda_1\left(\cOmega(\X^m)\right) \ge \frac{[\Phi^{-1}(0.75)]^2}{2a^2}.
\end{equation*}
However, we may choose $a$ to be arbitrarily close to 0, implying that the maximum eigenvalue of $\cOmega(\X^m)$ may be made arbitrarily large, and the estimator breaks down. This concludes the proof.

\subsection{Proof of Theorem~\ref{ThmCLIMEBreakdown}}
\label{SecThmCLIMEBreakdown}

Clearly, $\epsilon_n(\hOmega, \X) \geq \frac{1}{n}$ for any $\X$, by the definition of the breakdown point. To show equality, we now provide a data matrix $X$ and a corrupted data matrix $\X^1$, where $\X^1$ differs from $\X$ in at most one element per column, and the CLIME problem is feasible for $\hSigma(\X)$ but infeasible for $\hSigma(\X^1)$. Consider the $n \times 2$ matrix $\X^1$, constructed as follows:
\begin{equation*}
\X^1 = \left(
\begin{array}{cc}
a_1 & -a_1 \\
a_2 & -a_2 \\
\vdots & \vdots \\
a_n & -a_n
\end{array}
\right),
\end{equation*}
where the $a_k$'s are all distinct. Note that the columns of $\X^1$ are perfectly negatively correlated; hence, the correlation matrix (computed from either Kendall's tau or Spearman's rho, for instance) is
\begin{equation*}
\left(
\begin{array}{cc}
1 & -1 \\
-1 & 1
\end{array}
\right).
\end{equation*}
Furthermore, we have $\hsigma_1 = \hsigma_2 := \hsigma$, since the data in the two columns are negatives of each other. It follows that
\begin{equation*}
\hSigma(\X^1) = \hsigma^2 \left(
\begin{array}{cc}
1 & -1 \\
-1 & 1
\end{array}
\right).
\end{equation*}
Clearly, the problem
\begin{equation*}
\beta_1: \left\|\hSigma(\X^1) \beta_1 - \left(\begin{array}{c}1 \\ 0 \end{array}\right)\right\|_\infty \le \lambda
\end{equation*}
is infeasible for $\lambda < \frac{1}{2}$. Hence, the CLIME estimator based on $\hSigma(\X^1)$ is infeasible.

On the other hand, we may construct an initial data matrix $\X$ such that the CLIME program based on $\hSigma(\X)$ is feasible, simply by altering the last row of $\X^1$. Suppose we change the last row of $\X^1$ to $(a_n, a_n)$. Then the columns are no longer perfectly negatively correlated, and it is easy to check that the correlation matrix of $\X$ will take the form
\begin{equation*}
\left(
\begin{array}{cc}
1 & a \\
a & 1
\end{array}
\right),
\end{equation*}
for some $|a| < 1$. Denoting the corresponding estimates of scale as $\hsigma_1$ and $\hsigma_2$, we then have
\begin{equation*}
\hSigma(\X) = \left(
\begin{array}{cc}
\hsigma_1^2 & a\hsigma_1\hsigma_2 \\
a\hsigma_1\hsigma_2 & \hsigma_2^2
\end{array}
\right).
\end{equation*}
Note that $\det\{\hSigma(\X)\} = \hsigma_1^2 \hsigma_2^2 (1 - a^2) > 0$. It follows that $\hSigma(\X)$ is invertible. In particular, the matrix $\left(\hSigma(\X)\right)^{-1}$ is always a feasible point for the CLIME program based on $\hSigma(\X)$.

Hence, we conclude that the CLIME program breaks down when even one corruption per column is allowed. It follows that $\epsilon_n(\hOmega, \X) = \frac{1}{n}$ for the constructed value of $\X$.

\section{Simulations}
\label{SecSimulations}

In this section, we perform simulation studies to examine the performance of the two robust covariance matrix estimators introduced in Section~\ref{SecBackground}, and also the robust precision matrix estimators obtained using the GLasso. We will refer to the two type of estimators as \texttt{Kendall} and \texttt{Spearman}, respectively.

%{\red In order to employ existing software for optimizing the GLasso program, we perform the projection~\eqref{EqnCovProj} of the robust covariance estimators onto the positive semidefinite cone. An efficient iterative algorithm for implementing this projection step may be found in \cite{HanEtal15}.}

For comparison, we also compute the following robust covariance matrix estimators, which are similarly plugged into the GLasso to obtain robust precision matrix estimators:
\begin{itemize}
\item \texttt{SpearmanU}: The pairwise covariance matrix estimator proposed in \cite{Croux2014}, where the MAD estimator is combined with Spearman's rho (without transformation): 
\[\hSigma_{ij} = \hsigma_i\hsigma_j\boldr^S_{ij}, \qquad\text{where }\hsigma_i = [\Phi^{-1}(0.75)]^{-1}\hat{d}_i.\]
\item \texttt{OGK}: The OGK estimator proposed in \cite{MaronnaZamar2002}, with scale estimator $Q_n$.
\item \texttt{NPD}: The pairwise covariance matrix estimator considered in \cite{Tarr2015}, where 
\[\tSigma_{ij} = \frac{1}{4} \left(\hsigma_{(i,j), +}^2 - \hsigma_{(i,j),-}^2\right),\]
$\hsigma_{(i,j),+}$ is the $Q_n$ statistic computed from $\{X_{ki} + X_{kj}: 1 \le k \le n\}$, and $\hsigma_{(i,j),-}$ is the $Q_n$ statistic computed from $\{X_{ki} - X_{kj}: 1 \le k \le n\}$.  An NPD projection is applied to $\tSigma$ to obtain the final positive semidefinite covariance matrix estimator:
\[\hSigma = \min_{\M\succeq 0} \|\tSigma - \M\|_F.\]
\end{itemize}
Further details for the orthogonalized Gnanedesikan-Kettenring (OGK) and nearest positive definite (NPD) procedures may be found in \cite{MaronnaZamar2002} and \cite{Higham2002}, respectively. The nonrobust GLasso, which takes the sample covariance matrix estimator as an input (\texttt{SampleCov}), as well as the inverse sample covariance matrix estimator (\texttt{InvCov}), applicable in the case $p<n$, are used as points of reference.  

%We select the tuning parameter $\lambda$ in the GLasso estimator using the modified BIC criterion, as is done in \cite{Croux2014}:
%\[\text{BIC}(\lambda) = \tr(\hOmega_\lambda\hSigma) -\log\det(\hOmega_\lambda) + \frac{\log n}{n}\sum_{i\leq j}\hat{e}_{ij}(\lambda),\]
%where $\hSigma$ is the (robust) covariance matrix estimator used in estimating $\hOmega$, $\hat{e}_{ij}(\lambda) = 1$ if $(\hOmega_\lambda)_{ij}\neq 0$ and $\hat{e}_{ij}(\lambda) = 0$ otherwise.  We compute $\hSigma_\lambda$ over a logarithmic spaced grid of ten values, as is done by default in the \texttt{huge}-package \citep{Zhao2014}.  The final estimator used is the one with the lowest $\text{BIC}(\lambda)$.

An implementation of the GLasso that allows the diagonal entries of the precision matrix estimator to be unpenalized is provided in the widely used \texttt{glasso} package. In this paper, however, we use the GLasso implementation from the \texttt{QUIC} package \citep{HsiEtal11}, since it does not require the input covariance matrix to be positive semidefinite, and speeds up substantially over \texttt{glasso}.  We select the tuning parameter $\lambda$ in GLasso by cross-validation: We first split the data into $K$ groups, or folds, of nearly equal size.  For a given $\lambda$ and $1\leq k\leq K$, we take the $k^{th}$ fold as the test set, and compute the precision matrix estimate $\hOmega_\lambda^{(-k)}$ based on the remaining $K-1$ folds.  We then compute the negative log-likelihood on the test set:
\[L^{(k)}(\lambda) = -\log\det\hOmega_\lambda^{(-k)} + \tr\left(\hSigma^{(k)}\hOmega_\lambda^{(-k)}\right),\]
where $\hSigma^{(k)}$ is the robust covariance estimate obtained from the test set. This is done over a logarithmically spaced grid of 15 values between $\lambda_{\text{max}} = \max_{i\neq j}|\hSigma_{ij}|$ and $\lambda_{\text{min}} = 0.01\lambda_{\text{max}}$, where $\hSigma$ is the robust covariance estimate computed from the whole data set.  The value of $\lambda$ that minimizes
\[\frac{1}{K}\sum_{k=1}^KL^{(k)}(\lambda)\]
is selected as the final tuning parameter.

\paragraph{\textbf{Simulation settings:}} We consider the following four sampling schemes, covering different structures of the true precision matrix $\bOmega^*\in\R^{p\times p}$.  The first three structures come from \cite{Cai2011}.
\begin{itemize}
\item Banded: $\bOmega^*_{ij} = 0.6^{|i-j|}$.
\item Sparse: $\bOmega^* = \B + \delta\I_p$, where $b_{ii} = 0$ and $b_{ij} = b_{ji}$, with $P(b_{ij} = 0.5) = 0.1$ and $P(b_{ij} = 0) = 0.9$, for $i\neq j$.  The parameter $\delta$ is chosen such that the condition number of $\bOmega^*$ equals $p$.  The matrix is then standardized to have unit diagonals.
\item Dense: $\bOmega^*_{ii} = 1$ and $\bOmega^*_{ij} = 0.5$, for $i\neq j$.
\item Diagonal: $\bOmega^* = \I_p$.
\end{itemize}

For each sampling scheme and dimension $p \in \{120, 400\}$, we generate $B = 100$ samples of size $n = 200$ from the multivariate normal distribution $N(\0, (\bOmega^*)^{-1})$.  We then add $5\%$ or $10\%$ of rowwise or cellwise contamination to the data, where the outliers are sampled independently from $N(10, 0.2)$.  We also simulate model deviation by generating all observations from either the multivariate $t$-distribution, $t_3(\0, (\bOmega^*)^{-1})$, or the alternative $t$-distribution, $t_3^*(\0, (\bOmega^*)^{-1})$, each with three degrees of freedom. Recall that $\X\sim t_{\nu}(\0, (\bOmega^*)^{-1})$, where $t_{\nu}(\0, \bOmega^*)^{-1})$ denotes the multivariate $t$-distribution with $\nu$ degrees of freedom, if 
\[\X = \Y/\sqrt{\tau},\]
where $\Y\sim N(\0, (\bOmega^*)^{-1})$ and $\tau\sim\Gamma(\nu/2, \nu/2)$.  The alternative $t$-distribution, denoted by $t_\nu^*$, is proposed in \cite{FinegoldDrton2011} as a generalization of the multivariate $t$-distribution.  We say that $\X\sim t_{\nu}^*(\0, (\bOmega^*)^{-1})$ if 
\[X_i = Y_i/\sqrt{\tau_i}, \qquad \forall 1 \le i \le p,\]
where the $p$ divisors $\tau_i\sim\Gamma(\nu/2, \nu/2)$ are independent.  In this case, the heaviness of the tails are different for different components of $\X$.

\paragraph{\textbf{Performance measures:}} We assess the performance of the covariance and precision matrix estimators via the deviations $\|\hSigma-\bSigma^*\|_\infty$ and $\|\hOmega-\bOmega^*\|_\infty$, respectively. To measure the accuracy of recovering the support of the true precision matrix, we also consider the false positive (FP) and false negative (FN) rates:
\[
\text{FP} = \frac{|\{(i, j): \hOmega_{ij}\neq 0, \bOmega^*_{ij} = 0\}|}{|\{(i, j): \bOmega^*_{ij} = 0\}|}, \quad \text{and} \quad
\text{FN} = \frac{|\{(i, j): \hOmega_{ij} = 0, \bOmega^*_{ij} \neq 0\}|}{|\{(i, j): \bOmega^*_{ij} \neq 0\}|}. 
\]
FP gives the proportion of zero elements in the true precision matrix that are incorrectly estimated to be nonzero, while FN gives the proportion of nonzero elements in the true precision matrix that are incorrectly estimated to be zero. Note that if $\bOmega^*$ has no zero entries, as in the case of the banded and dense structures, the quantity FP is undefined.

\vspace{5mm}
Tables~\ref{Tablep120a} and \ref{Tablep120b} show the results for $n = 200$ and $p = 120$. We summarize the salient points below:
\begin{itemize}
\item When the dataset is clean, \texttt{SampleCov} performs best in terms of both covariance and precision matrix estimation, across all sampling schemes.  Note that even though the data are uncontaminated, \texttt{InvCov} performs poorly, due to the fact that the sample covariance matrix has low precision when $p>n/2$.
\item In the case of rowwise contamination, the nonrobust \texttt{SampleCov} has the largest estimation error for the covariance matrix, as expected. Curiously, the precision matrix estimation error based on \texttt{SampleCov} is the lowest among all estimators. We do not have good explanation for this, but the tuning parameter selected for \texttt{SampleCov} by cross-validation tends to be smaller (as can be seen from its relatively low FN). \texttt{NPD}, \texttt{Kendall}, \texttt{Spearman}, and \texttt{SpearmanU} have similar performance in terms of both covariance and precision matrix estimation.  In all sampling schemes, \texttt{OGK} outperforms these four estimators for covariance estimation, but not consistently so for precision matrix estimation.
\item For covariance and precision matrix estimation under cellwise contamination, the \texttt{Kendall}, \texttt{Spearman}, and \texttt{SpearmanU} estimators perform the best.  \texttt{NPD} performs the worst among all cellwise robust covariance matrix estimators.  Nonetheless, \texttt{NPD} still beats \texttt{OGK}, which is designed to work well under rowwise contamination, and also beats the nonrobust \texttt{SampleCov}.
\item When the data are generated from the multivariate $t$-distribution or alternative $t$-distribution, we again see that \texttt{Kendall}, \texttt{Spearman}, and \texttt{SpearmanU} behave similarly and outperform all other estimators, across all sampling schemes.  
\item When $\bOmega^*$ is either sparse or diagonal, FP is low for all estimators except \texttt{InvCov}, under all contamination mechanisms.
\item Except for \texttt{InvCov}, FN is high when $\bOmega^*$ is banded or dense, under all contamination mechanisms.  This is expected because GLasso implicitly assumes the underlying $\bOmega^*$ to be sparse, which is not true in these cases.  When $\bOmega^*$ is sparse, the FN for \texttt{Kendall}, \texttt{Spearman}, and \texttt{SpearmanU} are relatively low compared to the other estimators.
\end{itemize}

\noindent Tables~\ref{Tablep400a} and \ref{Tablep400b} show the results for $n = 200$ and $p = 400$.  Since $p>n$, the inverse sample covariance matrix cannot be computed, hence is excluded from the analysis.  Overall, we obtain conclusions similar to those obtained in the first set of simulations:
\begin{itemize}
\item When the data are clean, \texttt{SampleCov} perform best in terms of estimation error, across all sampling schemes.  Immediately following are \texttt{OGK} and \texttt{NPD}, and then \texttt{Kendall}, \texttt{Spearman}, and \texttt{SpearmanU} (the last three have nearly the same performance).
\item Under rowwise contamination, \texttt{SampleCov} has the worst covariance estimation error, but also the best precision estimation error, across all sampling schemes.  \texttt{OGK} performs best in terms of covariance estimation, but not precision estimation. \texttt{NPD}, \texttt{Kendall}, \texttt{Spearman}, and \texttt{SpearmanU} have similar performance in nearly all cases.  When $\bOmega^*$ is diagonal and the contamination fraction is 10\%, \texttt{Kendall} turns out to have high precision estimation error, possibly because the selected tuning parameter in GLasso is too small (as can be seen by the high FP).  
\item In terms of estimation error under cellwise contamination, \texttt{OGK} performs nearly as badly as \texttt{SampleCov}.  \texttt{Kendall}, \texttt{Spearman}, and \texttt{SpearmanU} perform equally well, while \texttt{NPD} is slightly worse off.  
\item When the data are generated from the multivariate $t$-distribution or alternative $t$-distribution, \texttt{SampleCov} performs badly.  \texttt{Kendall}, \texttt{Spearman}, and \texttt{SpearmanU} perform similarly and outperform \texttt{OGK} and \texttt{NPD}, across all sampling schemes.  
\item In general, under all contamination mechanisms, when $\bOmega^*$ is either sparse or diagonal, FP is low for all estimators.  On the other hand, when $\bOmega^*$ is banded or dense, FN is high, as expected.  When $\bOmega^*$ is sparse, FN is not as low as desired.
\end{itemize}

In summary, \texttt{SampleCov} performs best for clean data.  Under rowwise contamination, \texttt{OGK} yields the best results in terms of covariance estimation.  Under cellwise contamination, \texttt{Kendall}, \texttt{Spearman}, and \texttt{SpearmanU} equally share the best performance, while \texttt{NPD} is slightly worse off.  \texttt{Kendall}, \texttt{Spearman}, and \texttt{SpearmanU} also perform very well when the data are generated from a multivariate $t$-distribution or the alternative $t$-distribution, although these latter cases are not covered by our theory.

\begin{landscape}
\begin{table}[ht]
\centering
\begin{tabular}{|l|l|rrrr|rrrr|rrrr|}
\hline
& & \multicolumn{4}{c|}{clean} & \multicolumn{4}{c|}{$5\%$ rowwise} & \multicolumn{4}{c|}{$10\%$ rowwise} \\
\hline
& & Cov & Prec & FP & FN & Cov & Prec & FP & FN & Cov & Prec & FP & FN \\ 
\hline
\multirow{7}{*}{Banded} & \texttt{SampleCov} & 1.11 & 0.30 &  & 0.85 & 5.91 & 0.31 &  & 0.60 & 10.44 & 0.31 &  & 0.61 \\ 
& \texttt{OGK} & 1.20 & 0.32 &  & 0.88 & 1.98 & 0.37 &  & 0.90 & 2.91 & 0.41 &  & 0.91 \\ 
& \texttt{NPD} & 1.26 & 0.35 &  & 0.96 & 2.24 & 0.37 &  & 0.72 & 3.39 & 0.39 &  & 0.71 \\ 
& \texttt{Kendall} & 1.73 & 0.33 &  & 0.87 & 2.50 & 0.32 &  & 0.63 & 3.37 & 0.31 &  & 0.63 \\ 
& \texttt{Spearman} & 1.73 & 0.33 &  & 0.87 & 2.50 & 0.33 &  & 0.64 & 3.37 & 0.33 &  & 0.64 \\ 
& \texttt{SpearmanU} & 1.73 & 0.34 &  & 0.88 & 2.50 & 0.34 &  & 0.64 & 3.37 & 0.34 &  & 0.63 \\ 
& \texttt{InvCov} & 1.11 & 1.68 &  & 0.00 & 5.91 & 1.83 &  & 0.00 & 10.44 & 2.09 &  & 0.00 \\ 
\hline
\multirow{7}{*}{Sparse} & \texttt{SampleCov} & 0.70 & 0.34 & 0.19 & 0.11 & 5.57 & 0.35 & 0.36 & 0.30 & 10.09 & 0.32 & 0.36 & 0.32 \\ 
& \texttt{OGK} & 0.79 & 0.39 & 0.18 & 0.15 & 1.62 & 0.51 & 0.18 & 0.20 & 2.39 & 0.59 & 0.17 & 0.24 \\ 
& \texttt{NPD} & 0.82 & 0.47 & 0.09 & 0.32 & 1.63 & 0.55 & 0.21 & 0.66 & 2.58 & 0.61 & 0.20 & 0.76 \\ 
& \texttt{Kendall} & 1.15 & 0.43 & 0.17 & 0.16 & 1.63 & 0.41 & 0.32 & 0.37 & 2.36 & 0.40 & 0.32 & 0.41 \\ 
& \texttt{Spearman} & 1.15 & 0.43 & 0.17 & 0.16 & 1.64 & 0.43 & 0.32 & 0.37 & 2.38 & 0.43 & 0.31 & 0.42 \\ 
& \texttt{SpearmanU} & 1.15 & 0.45 & 0.17 & 0.15 & 1.65 & 0.45 & 0.33 & 0.36 & 2.37 & 0.46 & 0.31 & 0.41 \\ 
& \texttt{InvCov} & 0.70 & 2.83 & 1.00 & 0.00 & 5.57 & 3.14 & 1.00 & 0.00 & 10.09 & 3.54 & 1.00 & 0.00 \\ 
\hline
\multirow{7}{*}{Dense} & \texttt{SampleCov} & 0.60 & 0.60 &  & 0.99 & 5.54 & 0.61 &  & 0.75 & 10.05 & 0.60 &  & 0.75 \\ 
& \texttt{OGK} & 0.63 & 0.61 &  & 0.99 & 1.18 & 0.68 &  & 0.99 & 1.88 & 0.74 &  & 0.99 \\ 
& \texttt{NPD} & 0.67 & 0.62 &  & 0.99 & 1.23 & 0.65 &  & 0.82 & 1.89 & 0.69 &  & 0.79 \\ 
& \texttt{Kendall} & 1.00 & 0.66 &  & 0.99 & 1.37 & 0.64 &  & 0.79 & 1.91 & 0.64 &  & 0.78 \\ 
& \texttt{Spearman} & 1.00 & 0.66 &  & 0.99 & 1.37 & 0.64 &  & 0.79 & 1.91 & 0.64 &  & 0.77 \\ 
& \texttt{SpearmanU} & 0.99 & 0.66 &  & 0.99 & 1.37 & 0.64 &  & 0.78 & 1.91 & 0.65 &  & 0.77 \\ 
& \texttt{InvCov} & 0.60 & 2.63 &  & 0.00 & 5.54 & 1.28 &  & 0.00 & 10.05 & 1.48 &  & 0.00 \\ 
\hline
\multirow{7}{*}{Diagonal} & \texttt{SampleCov} & 0.30 & 0.31 & 0.00 & 0.00 & 5.31 & 0.26 & 0.24 & 0.00 & 9.84 & 0.28 & 0.24 & 0.00 \\ 
& \texttt{OGK} & 0.32 & 0.33 & 0.00 & 0.00 & 0.55 & 0.35 & 0.00 & 0.00 & 0.80 & 0.44 & 0.00 & 0.00 \\ 
& \texttt{NPD} & 0.33 & 0.35 & 0.00 & 0.00 & 0.63 & 0.31 & 0.18 & 0.00 & 0.98 & 0.39 & 0.21 & 0.00 \\ 
& \texttt{Kendall} & 0.51 & 0.62 & 0.00 & 0.00 & 0.68 & 0.51 & 0.20 & 0.00 & 0.96 & 0.46 & 0.21 & 0.00 \\ 
& \texttt{Spearman} & 0.51 & 0.62 & 0.00 & 0.00 & 0.68 & 0.52 & 0.21 & 0.00 & 0.96 & 0.47 & 0.22 & 0.00 \\ 
& \texttt{SpearmanU} & 0.51 & 0.62 & 0.00 & 0.00 & 0.68 & 0.52 & 0.21 & 0.00 & 0.96 & 0.45 & 0.23 & 0.00 \\ 
& \texttt{InvCov} & 0.30 & 2.81 & 1.00 & 0.00 & 5.31 & 3.19 & 1.00 & 0.00 & 9.84 & 3.60 & 1.00 & 0.00 \\ 
\hline
\end{tabular}
\caption{Simulation results for seven estimators and four sampling schemes, when $n = 200$ and $p = 120$. Performance is measured by $\|\hSigma-\bSigma^*\|_\infty$ for covariance matrix estimation (Cov), $\|\hOmega-\bOmega^*\|_\infty$ for precision matrix estimation (Prec), and false positive rate (FP) and false negative rate (FN) for support recovery of the true precision matrix.  The results are averaged over 100 replications.}
\label{Tablep120a}
\end{table}
\end{landscape}

\begin{landscape}
\begin{table}[ht]
\centering
\begin{tabular}{|l|l|rrrr|rrrr|rrrr|rrrr|}
\hline
& & \multicolumn{4}{c|}{$5\%$ cellwise} & \multicolumn{4}{c|}{$10\%$ cellwise} & \multicolumn{4}{c|}{multivariate $t$} & \multicolumn{4}{c|}{alternative $t$} \\
\hline
& & Cov & Prec & FP & FN & Cov & Prec & FP & FN & Cov & Prec & FP & FN & Cov & Prec & FP & FN \\ 
\hline
\multirow{7}{*}{Banded} & \texttt{SampleCov} & 8.33 & 0.51 &  & 0.97 & 13.09 & 0.54 &  & 0.99 & 18.31 & 0.49 &  & 0.87 & 57.72 & 0.57 &  & 0.93 \\ 
& \texttt{OGK} & 8.10 & 0.51 &  & 0.95 & 13.15 & 0.54 &  & 0.99 & 3.85 & 0.43 &  & 0.92 & 12.15 & 0.53 &  & 0.92 \\ 
& \texttt{NPD} & 2.78 & 0.41 &  & 0.95 & 4.70 & 0.46 &  & 0.96 & 4.06 & 0.44 &  & 0.96 & 4.53 & 0.46 &  & 0.96 \\ 
& \texttt{Kendall} & 2.43 & 0.40 &  & 0.92 & 3.67 & 0.45 &  & 0.92 & 3.32 & 0.41 &  & 0.90 & 3.60 & 0.42 &  & 0.90 \\ 
& \texttt{Spearman} & 2.43 & 0.41 &  & 0.92 & 3.67 & 0.45 &  & 0.92 & 3.32 & 0.41 &  & 0.91 & 3.60 & 0.42 &  & 0.90 \\ 
& \texttt{SpearmanU} & 2.43 & 0.41 &  & 0.93 & 3.67 & 0.45 &  & 0.93 & 3.32 & 0.42 &  & 0.91 & 3.60 & 0.43 &  & 0.90 \\ 
& \texttt{InvCov} & 8.33 & 0.41 &  & 0.00 & 13.09 & 0.46 &  & 0.00 & 18.31 & 1.26 &  & 0.00 & 57.72 & 0.53 &  & 0.00 \\ 
\hline
\multirow{7}{*}{Sparse} & \texttt{SampleCov} & 8.39 & 0.90 & 0.05 & 0.81 & 13.25 & 0.93 & 0.01 & 0.91 & 11.47 & 0.77 & 0.14 & 0.43 & 32.95 & 0.94 & 0.12 & 0.44 \\ 
& \texttt{OGK} & 8.18 & 0.90 & 0.06 & 0.77 & 13.71 & 0.94 & 0.01 & 0.90 & 3.38 & 0.65 & 0.16 & 0.23 & 8.67 & 0.86 & 0.16 & 0.34 \\ 
& \texttt{NPD} & 2.15 & 0.61 & 0.06 & 0.45 & 4.04 & 0.73 & 0.05 & 0.59 & 3.17 & 0.69 & 0.08 & 0.45 & 3.31 & 0.71 & 0.07 & 0.49 \\ 
& \texttt{Kendall} & 1.58 & 0.61 & 0.16 & 0.30 & 2.44 & 0.72 & 0.13 & 0.46 & 2.34 & 0.58 & 0.15 & 0.25 & 2.32 & 0.62 & 0.16 & 0.22 \\ 
& \texttt{Spearman} & 1.58 & 0.62 & 0.15 & 0.30 & 2.44 & 0.73 & 0.13 & 0.46 & 2.34 & 0.59 & 0.15 & 0.25 & 2.32 & 0.62 & 0.15 & 0.23 \\ 
& \texttt{SpearmanU} & 1.58 & 0.63 & 0.16 & 0.30 & 2.44 & 0.73 & 0.13 & 0.46 & 2.34 & 0.60 & 0.15 & 0.25 & 2.32 & 0.63 & 0.16 & 0.22 \\ 
& \texttt{InvCov} & 8.39 & 0.77 & 1.00 & 0.00 & 13.25 & 0.85 & 1.00 & 0.00 & 11.47 & 2.10 & 1.00 & 0.00 & 32.95 & 0.87 & 1.00 & 0.00 \\ 
\hline
\multirow{7}{*}{Dense} & \texttt{SampleCov} & 8.39 & 0.90 &  & 0.99 & 13.25 & 0.93 &  & 0.99 & 10.06 & 0.88 &  & 0.98 & 31.24 & 0.95 &  & 0.99 \\ 
& \texttt{OGK} & 8.02 & 0.90 &  & 0.99 & 13.14 & 0.93 &  & 0.99 & 2.14 & 0.76 &  & 0.99 & 6.82 & 0.89 &  & 0.99 \\ 
& \texttt{NPD} & 1.51 & 0.71 &  & 0.99 & 2.64 & 0.78 &  & 0.99 & 2.21 & 0.76 &  & 0.99 & 2.50 & 0.78 &  & 0.99 \\ 
& \texttt{Kendall} & 1.36 & 0.70 &  & 0.99 & 2.00 & 0.75 &  & 0.99 & 1.84 & 0.74 &  & 0.99 & 2.08 & 0.75 &  & 0.99 \\ 
& \texttt{Spearman} & 1.36 & 0.70 &  & 0.99 & 2.00 & 0.75 &  & 0.99 & 1.84 & 0.74 &  & 0.99 & 2.08 & 0.75 &  & 0.99 \\ 
& \texttt{SpearmanU} & 1.36 & 0.70 &  & 0.99 & 2.00 & 0.75 &  & 0.99 & 1.84 & 0.74 &  & 0.99 & 2.08 & 0.75 &  & 0.99 \\ 
& \texttt{InvCov} & 8.39 & 0.78 &  & 0.00 & 13.25 & 0.85 &  & 0.00 & 10.06 & 1.88 &  & 0.00 & 31.24 & 0.88 &  & 0.00 \\ 
\hline
\multirow{7}{*}{Diagonal} & \texttt{SampleCov} & 8.44 & 0.89 & 0.00 & 0.00 & 13.37 & 0.93 & 0.00 & 0.00 & 5.07 & 0.77 & 0.01 & 0.00 & 15.41 & 0.90 & 0.00 & 0.00 \\ 
& \texttt{OGK} & 7.89 & 0.89 & 0.00 & 0.00 & 13.15 & 0.93 & 0.00 & 0.00 & 1.07 & 0.51 & 0.00 & 0.00 & 3.44 & 0.77 & 0.00 & 0.00 \\ 
& \texttt{NPD} & 0.76 & 0.43 & 0.00 & 0.00 & 1.37 & 0.58 & 0.00 & 0.00 & 1.11 & 0.52 & 0.00 & 0.00 & 1.25 & 0.55 & 0.00 & 0.00 \\ 
& \texttt{Kendall} & 0.70 & 0.44 & 0.00 & 0.00 & 1.00 & 0.50 & 0.00 & 0.00 & 0.93 & 0.48 & 0.00 & 0.00 & 1.02 & 0.50 & 0.00 & 0.00 \\ 
& \texttt{Spearman} & 0.70 & 0.44 & 0.00 & 0.00 & 1.00 & 0.50 & 0.00 & 0.00 & 0.93 & 0.48 & 0.00 & 0.00 & 1.02 & 0.50 & 0.00 & 0.00 \\ 
& \texttt{SpearmanU} & 0.70 & 0.44 & 0.00 & 0.00 & 1.00 & 0.50 & 0.00 & 0.00 & 0.93 & 0.48 & 0.00 & 0.00 & 1.02 & 0.50 & 0.00 & 0.00 \\ 
& \texttt{InvCov} & 8.44 & 0.76 & 1.00 & 0.00 & 13.37 & 0.85 & 1.00 & 0.00 & 5.07 & 2.12 & 1.00 & 0.00 & 15.41 & 0.92 & 1.00 & 0.00 \\ 
\hline
\end{tabular}
\caption{Simulation results for seven estimators and four sampling schemes, when $n = 200$ and $p = 120$. Performance is measured by $\|\hSigma-\bSigma^*\|_\infty$ for covariance matrix estimation (Cov), $\|\hOmega-\bOmega^*\|_\infty$ for precision matrix estimation (Prec), and false positive rate (FP) and false negative rate (FN) for support recovery of the true precision matrix.  The results are averaged over 100 replications.}
\label{Tablep120b}
\end{table}
\end{landscape}

\begin{landscape}
\begin{table}[ht]
\centering
\begin{tabular}{|l|l|rrrr|rrrr|rrrr|}
\hline
& & \multicolumn{4}{c|}{clean} & \multicolumn{4}{c|}{$5\%$ rowwise} & \multicolumn{4}{c|}{$10\%$ rowwise} \\
\hline
& & Cov & Prec & FP & FN & Cov & Prec & FP & FN & Cov & Prec & FP & FN \\ 
\hline
\multirow{6}{*}{Banded} & \texttt{SampleCov} & 1.24 & 0.33 &  & 0.96 & 5.98 & 0.34 &  & 0.85 & 10.34 & 0.35 &  & 0.86 \\ 
& \texttt{OGK} & 1.38 & 0.34 &  & 0.96 & 2.20 & 0.38 &  & 0.95 & 3.10 & 0.41 &  & 0.95 \\ 
& \texttt{NPD} & 1.64 & 0.38 &  & 0.99 & 2.75 & 0.40 &  & 0.89 & 3.95 & 0.42 &  & 0.89 \\ 
& \texttt{Kendall} & 2.07 & 0.37 &  & 0.97 & 2.76 & 0.34 &  & 0.85 & 3.73 & 0.35 &  & 0.86 \\ 
& \texttt{Spearman} & 2.07 & 0.37 &  & 0.97 & 2.76 & 0.35 &  & 0.86 & 3.73 & 0.35 &  & 0.86 \\ 
& \texttt{SpearmanU} & 2.07 & 0.37 &  & 0.97 & 2.76 & 0.35 &  & 0.86 & 3.73 & 0.35 &  & 0.86 \\ 
\hline
\multirow{6}{*}{Sparse} & \texttt{SampleCov} & 0.81 & 0.44 & 0.09 & 0.56 & 5.61 & 0.43 & 0.14 & 0.73 & 9.93 & 0.40 & 0.14 & 0.74 \\ 
& \texttt{OGK} & 0.96 & 0.45 & 0.09 & 0.59 & 1.86 & 0.53 & 0.09 & 0.62 & 2.87 & 0.61 & 0.10 & 0.62 \\ 
& \texttt{NPD} & 1.11 & 0.59 & 0.03 & 0.79 & 2.14 & 0.63 & 0.08 & 0.93 & 3.61 & 0.68 & 0.08 & 0.95 \\ 
& \texttt{Kendall} & 1.35 & 0.50 & 0.09 & 0.60 & 1.76 & 0.48 & 0.12 & 0.77 & 2.71 & 0.47 & 0.12 & 0.79 \\ 
& \texttt{Spearman} & 1.35 & 0.50 & 0.08 & 0.60 & 1.77 & 0.49 & 0.12 & 0.77 & 2.72 & 0.49 & 0.12 & 0.79 \\ 
& \texttt{SpearmanU} & 1.35 & 0.51 & 0.09 & 0.60 & 1.78 & 0.51 & 0.13 & 0.77 & 2.72 & 0.51 & 0.12 & 0.79 \\  
\hline
\multirow{6}{*}{Dense} & \texttt{SampleCov} & 0.69 & 0.62 &  & 1.00 & 5.53 & 0.62 &  & 0.91 & 9.90 & 0.60 &  & 0.91 \\ 
& \texttt{OGK} & 0.78 & 0.64 &  & 1.00 & 1.29 & 0.69 &  & 1.00 & 1.92 & 0.74 &  & 1.00 \\ 
& \texttt{NPD} & 0.89 & 0.65 &  & 1.00 & 1.54 & 0.68 &  & 0.93 & 2.24 & 0.72 &  & 0.91 \\ 
& \texttt{Kendall} & 1.17 & 0.68 &  & 1.00 & 1.54 & 0.65 &  & 0.92 & 2.12 & 0.70 &  & 0.91 \\ 
& \texttt{Spearman} & 1.17 & 0.68 &  & 1.00 & 1.54 & 0.65 &  & 0.92 & 2.12 & 0.65 &  & 0.91 \\ 
& \texttt{SpearmanU} & 1.17 & 0.68 &  & 1.00 & 1.54 & 0.66 &  & 0.92 & 2.12 & 0.65 &  & 0.91 \\ 
\hline
\multirow{6}{*}{Diagonal} & \texttt{SampleCov} & 0.34 & 0.37 & 0.00 & 0.00 & 5.28 & 0.26 & 0.09 & 0.00 & 9.64 & 0.32 & 0.09 & 0.00 \\ 
& \texttt{OGK} & 0.38 & 0.38 & 0.00 & 0.00 & 0.58 & 0.36 & 0.00 & 0.00 & 0.78 & 0.44 & 0.00 & 0.00 \\ 
& \texttt{NPD} & 0.45 & 0.32 & 0.00 & 0.00 & 0.78 & 0.37 & 0.07 & 0.00 & 1.15 & 0.44 & 0.09 & 0.00 \\ 
& \texttt{Kendall} & 0.59 & 0.72 & 0.00 & 0.00 & 0.78 & 0.60 & 0.08 & 0.00 & 1.07 & 4.83 & 0.33 & 0.00 \\ 
& \texttt{Spearman} & 0.59 & 0.72 & 0.00 & 0.00 & 0.78 & 0.60 & 0.08 & 0.00 & 1.07 & 0.57 & 0.08 & 0.00 \\ 
& \texttt{SpearmanU} & 0.59 & 0.72 & 0.00 & 0.00 & 0.78 & 0.59 & 0.08 & 0.00 & 1.07 & 0.56 & 0.09 & 0.00 \\  
\hline
\end{tabular}
\caption{Simulation results for six estimators and four sampling schemes, when $n = 200$ and $p = 400$. Performance is measured by $\|\hSigma-\bSigma^*\|_\infty$ for covariance matrix estimation (Cov), $\|\hOmega-\bOmega^*\|_\infty$ for precision matrix estimation (Prec), and false positive rate (FP) and false negative rate (FN) for support recovery of the true precision matrix.  The results are averaged over 100 replications.}
\label{Tablep400a}
\end{table}
\end{landscape}

\begin{landscape}
\begin{table}[ht]
\centering
\begin{tabular}{|l|l|rrrr|rrrr|rrrr|rrrr|}
\hline
& & \multicolumn{4}{c|}{$5\%$ cellwise} & \multicolumn{4}{c|}{$10\%$ cellwise} & \multicolumn{4}{c|}{multivariate $t$} & \multicolumn{4}{c|}{alternative $t$} \\
\hline
& & Cov & Prec & FP & FN & Cov & Prec & FP & FN & Cov & Prec & FP & FN & Cov & Prec & FP & FN \\ 
\hline
\multirow{6}{*}{Banded} & \texttt{SampleCov} & 8.90 & 0.48 &  & 0.69 & 13.70 & 0.46 &  & 0.44 & 22.41 & 0.45 &  & 0.87 & 137.82 & 0.57 &  & 0.86 \\ 
& \texttt{OGK} & 8.79 & 0.48 &  & 0.66 & 13.89 & 0.46 &  & 0.39 & 3.97 & 0.44 &  & 0.95 & 18.41 & 0.51 &  & 0.53 \\ 
& \texttt{NPD} & 4.04 & 0.45 &  & 0.98 & 7.03 & 0.45 &  & 0.78 & 5.03 & 0.46 &  & 0.94 & 5.83 & 0.48 &  & 0.97 \\ 
& \texttt{Kendall} & 2.89 & 0.42 &  & 0.96 & 4.11 & 0.46 &  & 0.98 & 3.69 & 0.42 &  & 0.96 & 3.99 & 0.43 &  & 0.97 \\ 
& \texttt{Spearman} & 2.89 & 0.42 &  & 0.96 & 4.11 & 0.46 &  & 0.98 & 3.69 & 0.42 &  & 0.96 & 3.99 & 0.43 &  & 0.97 \\ 
& \texttt{SpearmanU} & 2.89 & 0.42 &  & 0.96 & 4.11 & 0.46 &  & 0.97 & 3.69 & 0.42 &  & 0.96 & 3.99 & 0.44 &  & 0.97 \\ 
\hline
\multirow{6}{*}{Sparse} & \texttt{SampleCov} & 8.98 & 0.91 & 0.01 & 0.96 & 13.82 & 0.85 & 0.52 & 0.45 & 13.53 & 0.79 & 0.05 & 0.80 & 79.44 & 0.96 & 0.04 & 0.85 \\ 
& \texttt{OGK} & 8.83 & 0.91 & 0.02 & 0.94 & 14.48 & 0.88 & 0.57 & 0.40 & 3.83 & 0.66 & 0.10 & 0.62 & 12.48 & 0.90 & 0.07 & 0.77 \\ 
& \texttt{NPD} & 3.10 & 0.72 & 0.03 & 0.83 & 6.15 & 0.82 & 0.02 & 0.87 & 4.40 & 0.76 & 0.03 & 0.84 & 4.67 & 0.78 & 0.03 & 0.86 \\ 
& \texttt{Kendall} & 1.80 & 0.64 & 0.06 & 0.74 & 2.94 & 0.74 & 0.05 & 0.82 & 2.61 & 0.63 & 0.07 & 0.69 & 2.71 & 0.66 & 0.07 & 0.67 \\ 
& \texttt{Spearman} & 1.80 & 0.65 & 0.06 & 0.74 & 2.94 & 0.74 & 0.05 & 0.82 & 2.61 & 0.64 & 0.07 & 0.69 & 2.71 & 0.66 & 0.07 & 0.67 \\ 
& \texttt{SpearmanU} & 1.80 & 0.65 & 0.07 & 0.73 & 2.94 & 0.75 & 0.05 & 0.82 & 2.61 & 0.64 & 0.07 & 0.68 & 2.71 & 0.66 & 0.07 & 0.67 \\ 
\hline
\multirow{6}{*}{Dense} & \texttt{SampleCov} & 8.96 & 0.90 &  & 0.96 & 13.81 & 0.85 &  & 0.46 & 12.64 & 0.88 &  & 0.99 & 79.01 & 0.98 &  & 1.00 \\ 
& \texttt{OGK} & 8.62 & 0.90 &  & 0.93 & 13.64 & 0.85 &  & 0.38 & 2.24 & 0.76 &  & 1.00 & 10.33 & 0.92 &  & 1.00 \\ 
& \texttt{NPD} & 2.35 & 0.77 &  & 1.00 & 4.28 & 0.84 &  & 1.00 & 2.82 & 0.79 &  & 1.00 & 3.22 & 0.81 &  & 1.00 \\ 
& \texttt{Kendall} & 1.64 & 0.72 &  & 1.00 & 2.29 & 0.77 &  & 1.00 & 2.12 & 0.75 &  & 1.00 & 2.25 & 0.76 &  & 1.00 \\ 
& \texttt{Spearman} & 1.64 & 0.72 &  & 1.00 & 2.29 & 0.77 &  & 1.00 & 2.12 & 0.75 &  & 1.00 & 2.25 & 0.76 &  & 1.00 \\ 
& \texttt{SpearmanU} & 1.64 & 0.72 &  & 1.00 & 2.29 & 0.77 &  & 1.00 & 2.12 & 0.75 &  & 1.00 & 2.25 & 0.76 &  & 1.00 \\ 
\hline
\multirow{6}{*}{Diagonal} & \texttt{SampleCov} & 9.03 & 0.90 & 0.00 & 0.00 & 13.93 & 0.87 & 0.47 & 0.00 & 6.33 & 0.77 & 0.01 & 0.00 & 39.73 & 0.95 & 0.00 & 0.00 \\ 
& \texttt{OGK} & 8.60 & 0.90 & 0.00 & 0.00 & 13.74 & 0.87 & 0.54 & 0.00 & 1.11 & 0.52 & 0.00 & 0.00 & 5.17 & 0.84 & 0.00 & 0.00 \\ 
& \texttt{NPD} & 1.20 & 0.54 & 0.00 & 0.00 & 2.19 & 0.69 & 0.00 & 0.00 & 1.42 & 0.58 & 0.00 & 0.00 & 1.62 & 0.62 & 0.00 & 0.00 \\ 
& \texttt{Kendall} & 0.81 & 0.52 & 0.00 & 0.00 & 1.15 & 0.54 & 0.00 & 0.00 & 1.06 & 0.52 & 0.00 & 0.00 & 1.14 & 0.54 & 0.00 & 0.00 \\ 
& \texttt{Spearman} & 0.81 & 0.52 & 0.00 & 0.00 & 1.15 & 0.54 & 0.00 & 0.00 & 1.06 & 0.52 & 0.00 & 0.00 & 1.14 & 0.54 & 0.00 & 0.00 \\ 
& \texttt{SpearmanU} & 0.81 & 0.52 & 0.00 & 0.00 & 1.15 & 0.54 & 0.00 & 0.00 & 1.06 & 0.52 & 0.00 & 0.00 & 1.14 & 0.54 & 0.00 & 0.00 \\  
\hline
\end{tabular}
\caption{Simulation results for six estimators and four sampling schemes, when $n = 200$ and $p = 400$. Performance is measured by $\|\hSigma-\bSigma^*\|_\infty$ for covariance matrix estimation (Cov), $\|\hOmega-\bOmega^*\|_\infty$ for precision matrix estimation (Prec), and false positive rate (FP) and false negative rate (FN) for support recovery of the true precision matrix.  The results are averaged over 100 replications.}
\label{Tablep400b}
\end{table}
\end{landscape}

%%%%%%%%%%%%%%%%%%%%%%%%%%%%%%%%%%%%%%%%%%%%%%%%%%%%%%%%%%%%%%%%%%%%%%%%%%%%%%

\section{Discussion}
\label{SecDiscussion}

In this paper, we have derived statistical error bounds for high-dimensional robust precision matrix estimators, when data are drawn from a multivariate normal distribution and then observed subject to cellwise contamination.  We show that in such settings, the precision matrix estimators that are obtained by plugging in pairwise robust covariance estimators to the GLasso or CLIME routine, as suggested by \cite{Croux2014} and \cite{Tarr2015}, have error bounds that match standard high-dimensional bounds for uncontaminated precision matrix estimation, up to an additive factor involving a constant multiple of the contamination fraction $\epsilon$.  Our results for precision matrix estimators are derived via estimation error bounds for robust covariance matrix estimators, which have similar deviation properties.

The results of our paper naturally suggest several venues for future work. As discussed earlier, our results seem to indicate that covariance estimators based on bounded-influence estimators of correlation and scale give rise to statistical error bounds of the form derived in our paper, and it would be interesting to rigorize this notion, as a further attempt to connect the fields of robust and high-dimensional statistics. It would also be interesting to relate the nonasymptotic statistical error bounds to the behavior of the sensitivity curve of the robust covariance estimator, which is the finite-sample analog of the influence function. We have also left open the question of calculating the breakdown point for the CLIME estimator with respect to more general data matrices, as well as the breakdown behavior of CLIME and GLasso under different notions of breakdown point. Although our results imply the superiority of the GLasso over the CLIME estimator from the perspective of the finite-sample breakdown point, this may only be part of the story.

Lastly, it would be interesting to generalize our study to other classes of distributions. In one direction, it would be possible to study contaminated versions of other distributions besides the multivariate Gaussian, for which the precision matrix encodes information about the underlying graphical model (e.g., Ising models on trees). A harder question to tackle would be the problem of robust graphical model estimation in settings where the structure of the graph is not encoded in the precision matrix alone. Finally, one could consider robust estimation of scatter matrices, when the uncontaminated data are drawn from an elliptical distribution. In that case, the proposed Kendall's tau and Spearman's rho correlation coefficients would still be Fisher consistent upon taking the respective sine transformations, so similar error bounds should hold.  As demonstrated in our simulation results, the pairwise covariance estimators based on Kendall's tau and Spearman's rho perform reasonably well when data are generated from either the multivariate $t$-distribution or the alternative $t$-distribution. This motivates studying the convergence rates of the same covariance matrix estimators under heavy-tailed or elliptical distributions.

The problem of estimating high-dimensional covariance matrices under various structural assumptions has also been widely studied. Various families of structured covariance matrices have been introduced, including bandable matrices \citep{CaiZhangZhou2010}, Toeplitz matrices \citep{CaiRenZhou2013}, and sparse matrices \citep{BickelLevina2008, CaiZho12}.  The proposed covariance matrix estimators involve regularizing the sample covariance matrix in accordance to structural assumptions.  It would be interesting to study robust versions of these structured covariance matrix estimators under a model such as cellwise contamination. Besides graphical models, covariance matrix estimation is also useful for statistical methods such as linear discriminant analysis and principal component analysis. Several high-dimensional procedures have been proposed with proven theoretical guarantees when data are uncontaminated \citep{CaiLiu2011, Vu2013}, and it would be interesting to study robust adaptations of these procedures, as well.

\bibliographystyle{chicago}
\bibliography{ref.bib}

%%%%%%%%%%%%%%%%%%%%%%%%%%%%%%%%%%%%%%%%%%%%%%%%%%%%%%%%%%%%%%%%%%%%%%%%%%%%%
\newpage
\appendix

\section{Proofs of supporting lemmas}

In this Appendix, we provide the proofs of the technical lemmas used to establish Theorems~\ref{ThmRobustCovK} and~\ref{ThmRobustCovS} in Section~\ref{SecCovEst}.

\subsection{Proof of Lemma~\ref{kendalltauconsistency}}
\label{AppLemKendallTau}

When $i = j$, we have
\begin{align*}
\boldr_{ii}^K & = \frac{2}{n(n-1)}\sum_{k<\ell} \sign^2(X_{ki}-X_{\ell i}) \\
& = \frac{2}{n(n-1)}\sum_{k<\ell} (1-\1(X_{ki}=X_{\ell i})) \\
& = 1 - \frac{2}{n(n-1)}\sum_{k<\ell}\1(X_{ki}=X_{\ell i}).
\end{align*}
Hence,
\begin{align*}
\bigg|\sin\Big(\frac{\pi}{2}\boldr_{ii}^K\Big) - \brho_{ii}\bigg| &= \bigg|\sin\Big(\frac{\pi}{2} - \frac{\pi}{n(n-1)}\sum_{k<\ell}\1(X_{ki}=X_{\ell i})\Big) - 1\bigg| \nonumber\\
&= \bigg|\cos\Big(\frac{\pi}{n(n-1)}\sum_{k<\ell}\1(X_{ki}=X_{\ell i})\Big) - \cos(0)\bigg| \nonumber\\
&\le \frac{\pi}{2}q_i,
\end{align*}
where 
\[q_i = \frac{2}{n(n-1)} \sum_{k < \ell} \1(X_{ki} = X_{\ell i})\]
is a $U$-statistic, and the last inequality follows from the fact that $\cos(x)$ is 1-Lipschitz.  By Hoeffding's inequality for $U$-statistics, we have
\begin{equation}
P\bigg(\bigg|\sin\Big(\frac{\pi}{2}\boldr_{ii}^K\Big) - \brho_{ii}\bigg| \ge t\bigg) \leq P\bigg(q_i\ge \frac{2t}{\pi}\bigg) \leq \exp\bigg(-\frac{4nt^2}{\pi^2}\bigg). \label{ii}
\end{equation}

Now, consider the case where $i\neq j$.  Note that
\begin{align}
\label{EqnChoco}
\bigg|\sin\Big(\frac{\pi}{2}\htauij\Big) - \brho_{ij}\bigg| &\leq \bigg|\sin\Big(\frac{\pi}{2}\htauij\Big) - \sin\Big(\frac{\pi}{2}\brho^K_{ij}\Big)\bigg| + \bigg|\sin\Big(\frac{\pi}{2}\brho^K_{ij}\Big) - \brho_{ij}\bigg|,
\end{align}
where $\brho^K_{ij} = E(\htauij)$ and the expectation is with respect to the distribution under model \eqref{cellContam}.  Since $\htauij$ is a $U$-statistic with kernel bounded between $-1$ and 1, Hoeffding's inequality and the fact that $\sin(x)$ is 1-Lipschitz implies that the first term on the right-hand side of inequality~\eqref{EqnChoco} satisfies
\begin{equation}
P\bigg(\bigg|\sin\Big(\frac{\pi}{2}\htauij\Big) - \sin\Big(\frac{\pi}{2}\brho^K_{ij}\Big)\bigg| \ge t\bigg) \leq P\bigg(|\htauij - \brho^K_{ij}| \ge \frac{2}{\pi}t\bigg) \leq 2\exp\bigg(-\frac{nt^2}{\pi^2}\bigg). \label{ij}
\end{equation}
Combining inequalities~\eqref{ii} and~\eqref{ij} and taking $t = C\sqrt{\frac{\log p}{n}}$, we conclude that
%\[P\bigg(\left\{\max_{i\neq j}\bigg|\sin\Big(\frac{\pi}{2}\htauij\Big) - \sin\Big(\frac{\pi}{2}\rho^K_{ij}\Big)\bigg| \ge t\right\} \cup \left\{\max_{1\leq i\leq p} \bigg|\sin\Big(\frac{\pi}{2}r_{ii}^K\Big) - \rho_{ii}\bigg| \geq t\right\}\bigg) \leq 2p^2\exp\bigg(-\frac{nt^2}{\pi^2}\bigg).\]
with probability at least $1 - 2p^{-(C^2/\pi^2-2)}$,
\begin{subequations}
\label{EqnMatrixElts}
\begin{equation}
\max_{1\leq i\leq p} \bigg|\sin\Big(\frac{\pi}{2}\boldr_{ii}^K\Big) - \brho_{ii}\bigg| \leq C\sqrt{\frac{\log p}{n}}, \qquad \text{and}
\end{equation}
\begin{equation}
\max_{i\neq j}\bigg|\sin\Big(\frac{\pi}{2}\htauij\Big) - \sin\Big(\frac{\pi}{2}\brho^K_{ij}\Big)\bigg| \leq C\sqrt{\frac{\log p}{n}}.
\end{equation}
\end{subequations}

For the second term on the right-hand side of equation~\eqref{EqnChoco}, we have under model~\eqref{cellContam} that for any pair $i\neq j$,
\begin{equation}
\label{EqnPairMarginal}
(X_{ki}, X_{kj}) \stackrel{\text{i.i.d.}}\sim F_{ij} = (1-\gamma_{ij}) \Phi_{\bmu_{\{i, j\}}, \bSigma_{\{i, j\}}} + \gamma_{ij}H_{ij}, \qquad \forall 1 \le k \le n,
\end{equation}
where $\Phi_{\bmu_{\{i, j\}}, \bSigma_{\{i, j\}}} = N(\bmu_{\{i, j\}}, \bSigma_{\{i, j\}})$ is the marginal distribution of $(Y_{ki}, Y_{kj})$, $H_{ij}$ is a mixture of the distributions of $Y_{ki}, Y_{kj}, Z_{ki}$, and $Z_{kj}$, and $1 - \gamma_{ij} = (1-\epsilon_i)(1-\epsilon_j)$.

By Lemma~\ref{exptau}, we have $\brho^K_{ij} = \frac{2}{\pi}\sin^{-1}\brho_{ij} + R_{ij}$, where $|R_{ij}| \leq 12\gamma_{ij} + 17\gamma_{ij}^2$. Setting $R'_{ij} = \frac{\pi}{2}R_{ij}$, we then have
\begin{align*}
\left|\sin\Big(\frac{\pi}{2}\brho^K_{ij}\Big) - \brho_{ij}\right| & = \left|\sin\big(\sin^{-1}(\brho_{ij}) + R'_{ij}\big) - \brho_{ij}\right| \\
&= \left|\sin(\sin^{-1}(\brho_{ij}))\cos(R'_{ij}) + \cos(\sin^{-1}(\brho_{ij}))\sin(R'_{ij}) - \brho_{ij}\right| \\
&= \left|\brho_{ij}\cos(R'_{ij}) + \sqrt{1-\brho_{ij}^2}\sin(R'_{ij}) - \brho_{ij}\right| \\
& \le \left|\brho_{ij} \left(1-\cos(R'_{ij})\right)\right| + \left|\sqrt{1-\brho_{ij}^2} \sin(R'_{ij})\right| \\
&\leq \big[1-\cos(R'_{ij})\big] + \big|\sin(R'_{ij})\big|.
\end{align*}
Note that $\gamma_{ij} = \epsilon_i + \epsilon_j - \epsilon_i \epsilon_j \le 2\epsilon$, so
\begin{equation*}
|R'_{ij}| \leq \frac{\pi}{2}(12 \gamma_{ij} + 17 \gamma_{ij}^2) \leq \frac{\pi}{2} \left(12 \cdot 2\epsilon + 17 (2\epsilon)^2\right) = 12\pi \epsilon + 34\pi \epsilon^2.
\end{equation*}
In particular, this bound is less than 1 when $\epsilon \le 0.02$. Then using the fact that $|\sin(x) - x| \le \frac{|x|^3}{3!}$ and $|1-\cos(x)| \le \frac{x^2}{2!}$ for $|x| \le 1$, we conclude that
\begin{equation}
\label{EqnRDev}
\max_{1 \le i,j \le p} \left|\sin\left(\frac{\pi}{2} \brho^K_{ij}\right) - \brho_{ij}\right| \leq \max_{1 \le i,j \le p} \left[|R'_{ij}| + \frac{(R'_{ij})^2}{2} + \frac{|R'_{ij}|^3}{6}\right] \le 2\max_{1 \le i,j \le p} |R'_{ij}| \leq 26\pi \epsilon.
\end{equation}

Combining inequalities~\eqref{EqnMatrixElts} and~\eqref{EqnRDev} then proves the desired result.

%{\blue 
%Here is an alternative proof of Lemma~\ref{kendalltauconsistency}: \\
%By Lemma~\ref{exptau}, we have $\rho_{ij} = \sin(\frac{\pi}{2}(\rho^K_{ij}-R_{ij}))$, where $|R_{ij}|\leq 12\gamma_{ij} + 17\gamma_{ij}^2$.  Let $t = 4\pi\sqrt{(\log p)/n}$.  Since $\gamma_{ij} = \epsilon_i + \epsilon_j - \epsilon_i \epsilon_j \le 2\epsilon$, by assumption we have $|R_{ij}|\leq 12\gamma_{ij} + 17\gamma_{ij}^2 \leq 12(2\epsilon) + 17(2\epsilon)^2\leq t/\pi$.  Hence,
%\begin{align*}
%P\bigg(\bigg|\sin\Big(\frac{\pi}{2}\htauij\Big) - \rho_{ij}\bigg| > t\bigg) &= P\bigg(\bigg|\sin\Big(\frac{\pi}{2}\htauij\Big) - \sin\left(\frac{\pi}{2}(\rho^K_{ij}-R_{ij})\right)\bigg| > t\bigg) \\
%&\leq P\bigg(|\htauij-\rho^K_{ij} + R_{ij}| > \frac{2}{\pi}t\bigg) \\
%&\leq P\bigg(|\htauij-\rho^K_{ij}| > \frac{2}{\pi}t-|R_{ij}|\bigg) \\
%&\leq P\bigg(|\htauij-\rho^K_{ij}| > \frac{t}{\pi}\bigg) \\
%&\leq 2\exp\left(-\frac{nt^2}{4\pi^2}\right).
%\end{align*}
%It follows that
%\[P\bigg(\max_{1\leq i, j\leq p}\bigg|\sin\Big(\frac{\pi}{2}\htauij\Big) - \rho_{ij}\bigg| > t\bigg) \leq 2p^2\exp\left(-\frac{nt^2}{4\pi^2}\right).\]
%Plugging in $t = 4\pi\sqrt{(\log p)/n}$, we see that with probability at least $1 - 2p^{-2}$,
%\[\max_{1\leq i, j\leq p}\bigg|\sin\Big(\frac{\pi}{2}\htauij\Big) - \rho_{ij}\bigg| \leq 4\pi\sqrt{\frac{\log p}{n}}.\]
%}

\subsection{Proof of Lemma~\ref{MADconsistency}}
\label{AppLemMADConsist}

Under model~\eqref{cellContam}, we have the marginal distributions
\[X_{ki} \stackrel{\text{i.i.d.}}\sim F_i = (1-\epsilon_i)\Phi_{\mu_i, \sigma_i} + \epsilon_i H_i, \quad \forall 1 \le k \le n,\]
for each $1 \le i \le p$, where $\Phi_{\mu_i, \sigma_i} = N(\mu_i, \sigma_i^2)$ is the marginal distribution of $Y_{ki}$ and $H_i$ is the marginal distribution of $Z_{ki}$.

Let $d(F_i)$ and $d(\Phi_{\mu_i, \sigma_i})$ denote the population MADs corresponding to $F_i$ and $\Phi_{\mu_i, \sigma_i}$, respectively.  Since $\hsigma_i = [\Phi^{-1}(0.75)]^{-1}\hat{d}_i$ and $\sigma_i = [\Phi^{-1}(0.75)]^{-1}d(\Phi_{\mu_i, \sigma_i})$, with $\hat{d}_i$ defined as in equation~\eqref{EqnSmiley}, it suffices to bound the term $|\hat{d}_i - d(\Phi_{\mu_i, \sigma_i})|$, which we decompose as follows:
\[|\hat{d}_i - d(\Phi_{\mu_i, \sigma_i})| \leq |\hat{d}_i - d(F_i)| + |d(F_i) - d(\Phi_{\mu_i, \sigma_i})|.\]
By Lemma~\ref{MADterm1}, for $0<t<1$,
\begin{align*}
P\big(\max_{1\leq i\leq p} |\hat{d}_i - d(F_i)| > t\big) &\leq \sum_{i=1}^pP\big(|\hat{d}_i - d(F_i)| > t\big) \\
&\leq 6p\max_{1\leq i\leq p} \left\{\exp(-2nc^2(\sigma_i)t^2)\right\} \\
&= 6p\exp\left(- 2n\min_{1\leq i\leq p}c^2(\sigma_i)t^2\right).
\end{align*}
Let $t = \Phi^{-1}(0.75)C'\sqrt{\frac{\log p}{n}}<1$. With probability at least $1 - 6p^{-\{2[\Phi^{-1}(0.75)]^2C'^2\min_{1\leq i\leq p}c^2(\sigma_i)-1\}}$, we then have
\[\max_{1\leq i\leq p} |\hat{d}_i - d(F_i)| \leq \Phi^{-1}(0.75)C'\sqrt{\frac{\log(p)}{n}}.\]

On the other hand, by Lemma~\ref{MADterm2}, we have
\[\max_{1\leq i\leq p} |d(F_i) - d(\Phi_{\mu_i, \sigma_i})| \leq 4.8\max_{1\leq i\leq p}\sigma_i\epsilon_i \leq 4.8M_\sigma \epsilon.\]
Thus, with probability at least $1 - 6p^{-\{2[\Phi^{-1}(0.75)]^2C'^2\min_{1\leq i\leq p}c^2(\sigma_i)-1\}}$, 
\[\max_{1\leq i\leq p} |\hat{d}_i - d(\Phi_{\mu_i, \sigma_i})| \leq \Phi^{-1}(0.75)C'\sqrt{\frac{\log(p)}{n}} + 4.8M_\sigma\epsilon.\]
It follows that with the same probability,
\begin{align*}
\max_{1\leq i\leq p}|\hsigma_i - \sigma_i| &= [\Phi^{-1}(0.75)]^{-1}\max_{1\leq i\leq p} |\hat{d}_i - d(\Phi_{\mu_i, \sigma_i})| \leq C'\sqrt{\frac{\log(p)}{n}} + 7.2M_\sigma\epsilon.
\end{align*}

%%%%%%

\subsection{Proof of Lemma~\ref{spearmanrhoconsistency}}
\label{AppLemSpearman}

When $i = j$, we have $2\sin(\frac{\pi}{6}r^S_{ii}) = \rho_{ii} = 1$; hence, we only need to consider the case when $i\neq j$. First, note that
\begin{align}
\label{EqnChoco2}
\bigg|2\sin\Big(\frac{\pi}{6}\hrhosij\Big) - \brho_{ij}\bigg| &\leq 2\bigg|\sin\Big(\frac{\pi}{6}\hrhosij\Big) - \sin\Big(\frac{\pi}{6}E(\hrhosij)\Big)\bigg| + \bigg|2\sin\Big(\frac{\pi}{6}E(\hrhosij)\Big) - \brho_{ij}\bigg|,
\end{align}
where the expectation is taken with respect to the distribution under model \eqref{cellContam}.  By Lemma~\ref{rhodecomp}, we have $\hrhosij = \frac{n-2}{n+1}U_{ij} + \frac{3}{n+1}\htauij$, where $U_{ij}$ is a $U$-statistic with kernel bounded between $-3$ and 3, and $\htauij$ is the Kendall's tau correlation. Using the fact that $\sin(x)$ is 1-Lipschitz, we then have
\begin{align*}
P\bigg(2\bigg|\sin\Big(\frac{\pi}{6}\hrhosij\Big) - \sin\Big(\frac{\pi}{6}E(\hrhosij)\Big)\bigg| \ge t\bigg) &\leq P\bigg(|\hrhosij - E(\hrhosij)| \ge \frac{3t}{\pi}\bigg)\\
&= P\left(\left|\frac{n-2}{n+1}(U_{ij}-E(U_{ij})) + \frac{3}{n+1}(\htauij-\brho^K_{ij})\right| \geq \frac{3t}{\pi}\right) \\
&\leq P\left(|U_{ij}-E(U_{ij}))| + \frac{6}{n+1} \geq \frac{3t}{\pi}\right) \\
&\leq P\bigg(|U_{ij}-E(U_{ij})| \ge \frac{3t}{2\pi}\bigg), 
\end{align*}
where the last inequality follows from the choice $t = C \sqrt{\frac{\log p}{n}}$ and the fact that $\frac{6}{n+1} \le \frac{3t}{2\pi}$ when $n \ge \frac{16\pi^2}{C^2 \log p}$. Furthermore, Hoeffding's inequality implies
\begin{align*}
P\bigg(|U_{ij}-E(U_{ij})| \ge \frac{3t}{2\pi}\bigg) \leq 2\exp\left(-2 \left\lfloor \frac{n}{3}\right\rfloor \left(\frac{3t}{2\pi}\right)^2 \frac{1}{6^2}\right) \le 2 \exp\left(-\frac{nt^2}{32\pi^2}\right).
\end{align*}
Plugging in $t = C\sqrt{\frac{\log p}{n}}$ and using a union bound, we then have
\begin{equation}
\label{EqnRSbd}
P\bigg(\max_{1\leq i, j\leq p}2\bigg|\sin\Big(\frac{\pi}{6}\hrhosij\Big) - \sin\Big(\frac{\pi}{6}E(\hrhosij)\Big)\bigg| \ge C \sqrt{\frac{\log p}{n}}\bigg) \leq 2p^2\exp\bigg(-\frac{C^2 \log p}{32\pi^2}\bigg) = 2p^{-\left\{\frac{C^2}{32\pi^2}-2\right\}}.
\end{equation}
For the second term on the right-hand side of equation~\eqref{EqnChoco}, we have under model~\eqref{cellContam} that for any pair $i\neq j$, 
\[(X_{ki}, X_{kj}) \stackrel{\text{i.i.d.}}\sim F_{ij} = (1-\gamma_{ij}) \Phi_{\bmu_{\{i, j\}}, \bSigma_{\{i, j\}}} + \gamma_{ij}H_{ij}, \qquad \forall 1 \le k \le n,\]
where $\Phi_{\bmu_{\{i, j\}}, \bSigma_{\{i, j\}}} = N(\bmu_{\{i, j\}}, \bSigma_{\{i, j\}})$ is the marginal distribution of $(Y_{ki}, Y_{kj})$, $H_{ij}$ is a mixture of the distributions of $Y_{ki}, Y_{kj}, Z_{ki}$, and $Z_{kj}$, and $1 - \gamma_{ij} = (1-\epsilon_i)(1-\epsilon_j)$.

By Lemma~\ref{exprho}, we have $E(\hrhosij) = \frac{6}{\pi}\sin^{-1}\left(\frac{\rho_{ij}}{2}\right) + R_{ij}$, where $|R_{ij}| \leq 48\gamma_{ij} + 129\gamma_{ij}^2 + 88\gamma_{ij}^3 + \frac{12}{n+1}$. Setting $R'_{ij} = \frac{\pi}{6}R_{ij}$, we then have
\begin{align*}
\bigg|2\sin\Big(\frac{\pi}{6}E(\hrhosij)\Big) - \brho_{ij}\bigg| & = \left|2\sin\big(\sin^{-1}(\brho_{ij}/2) + R'_{ij}\big) - \brho_{ij}\right| \\
&= \left|2\sin(\sin^{-1}(\brho_{ij}/2))\cos(R'_{ij}) + 2\cos(\sin^{-1}(\brho_{ij}/2))\sin(R'_{ij}) - \brho_{ij}\right| \\
&= \left|\brho_{ij}\cos(R'_{ij}) + 2\sqrt{1-\brho_{ij}^2/4} \cdot \sin(R'_{ij}) - \brho_{ij}\right| \\
& \le \left|\brho_{ij} \left(1-\cos(R'_{ij})\right)\right| + 2\left|\sqrt{1-\brho_{ij}^2/4} \cdot \sin(R'_{ij})\right| \\
&\leq \big[1-\cos(R'_{ij})\big] + 2\big|\sin(R'_{ij})\big|.
\end{align*}
Note that $\gamma_{ij} = \epsilon_i + \epsilon_j - \epsilon_i \epsilon_j \le 2\epsilon$, so
\begin{align*}
|R'_{ij}| &\leq \frac{\pi}{6}\left(48\gamma_{ij} + 129\gamma_{ij}^2 + 88\gamma_{ij}^3 + \frac{12}{n+1}\right) \\
&\leq \frac{\pi}{6} \left(48 \cdot 2\epsilon + 129 (2\epsilon)^2 + 88(2\epsilon)^3 + \frac{12}{n+1}\right) \\
&\leq 16\pi \epsilon + 86\pi \epsilon^2 + 118\pi\epsilon^3 + \frac{2\pi}{n+1}.
\end{align*}
In particular, this bound is less than 1 when $\epsilon \le 0.01$ and $n\geq 15$. Then using the fact that $|\sin(x) - x| \le \frac{|x|^3}{3!}$ and $|\cos(x) - 1| \le \frac{x^2}{2!}$ for $|x| \le 1$, we conclude that
\begin{align*}
\max_{1 \le i,j \le p} \left|2\sin\left(\frac{\pi}{6} E(\hrhosij)\right) - \brho_{ij}\right| &\leq \max_{1 \le i,j \le p} \left[2|R'_{ij}| + \frac{(R'_{ij})^2}{2} + \frac{|R'_{ij}|^3}{3}\right] \\
&\le 3\max_{1 \le i,j \le p} |R'_{ij}| \\
&\leq 48\pi \epsilon + 258\pi \epsilon^2 + 354\pi \epsilon^3 + \frac{6\pi}{n+1} \\
& \le 51 \pi \epsilon + \frac{3C}{2} \sqrt{\frac{\log p}{n}},
\end{align*}
where the final inequality uses the assumption $n \ge \frac{16 \pi^2}{C^2 \log p}$ once more. Combining this bound with inequality~\eqref{EqnRSbd} implies the desired result.

\section{Lemmas for MAD concentration}
\label{AppMAD}

In this Appendix, we prove several lemmas that are needed in deriving consistency of the MAD estimator. We begin with some results concerning the concentration of sample medians from an arbitrary distribution. A version of Lemmas~\ref{medFact2} and~\ref{MADterm1.0} is also contained in \cite{Serfling2009}.

\begin{Lemma}\label{medFact}
Let $X_1, \ldots, X_n$ be a random sample from a distribution with cdf $F$, and let $\hat{m}$ be the sample median.  If $\hat{m}<c$, then $|\{X_i: X_i\leq c\}|\geq\frac{n}{2}$.  If $\hat{m}>c$, then $|\{X_i: X_i\leq c\}|\leq\frac{n}{2}$.
\end{Lemma}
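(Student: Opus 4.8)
The plan is to unpack the definition of the sample median as an order statistic and then argue by a direct counting argument. First I would recall from the text surrounding equation~\eqref{EqnSmiley} that $\hat{m} = X_{(k^*)}$, the $k^*$-th smallest among $X_1, \dots, X_n$, where $k^* = \lceil n/2 \rceil$. Everything else follows from the monotonicity of order statistics, $X_{(1)} \le X_{(2)} \le \cdots \le X_{(n)}$.

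For the first implication, I would suppose $\hat{m} < c$. Since $X_{(1)} \le \cdots \le X_{(k^*)} = \hat{m} < c$, each of $X_{(1)}, \dots, X_{(k^*)}$ satisfies $X_{(\ell)} \le c$, so $|\{X_i: X_i\leq c\}| \ge k^* = \lceil n/2 \rceil \ge n/2$, as claimed. For the second implication, I would suppose $\hat{m} > c$; then $c < X_{(k^*)} \le X_{(k^*+1)} \le \cdots \le X_{(n)}$, so at least $n - k^* + 1$ of the observations exceed $c$, whence $|\{X_i: X_i\leq c\}| \le n - (n - k^* + 1) = k^* - 1 = \lceil n/2 \rceil - 1$. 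I would then close by checking the elementary bound $\lceil n/2 \rceil - 1 \le n/2$ separately for $n$ even (where it reads $n/2 - 1 \le n/2$) and $n$ odd (where it reads $(n-1)/2 \le n/2$), both of which clearly hold.

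I do not anticipate a genuine obstacle here: the statement is essentially immediate once $\hat{m}$ is written as an order statistic. The only points requiring a moment's care are the ceiling-function arithmetic in the second case and keeping consistent the convention $\hat{m} = X_{(\lceil n/2 \rceil)}$ adopted in the paper (a different tie-breaking convention for the median would only shift $k^*$ by at most one and leave both conclusions intact).
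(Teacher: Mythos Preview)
Your proof is correct and is exactly the unpacking the paper has in mind: the paper's own proof reads in full ``This result follows easily from the definition of the sample median,'' and your order-statistic argument with $k^* = \lceil n/2\rceil$ is the natural way to make that explicit.
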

\begin{proof}
This result follows easily from the definition of the sample median.
\end{proof}

\begin{Lemma}\label{medFact2}
Let $X_1, \ldots, X_n$ be a random sample from a distribution $F$. Let $m$ be the population median and let $\hat{m}$ be the sample median.  Then 
\[P\bigg(|\hat{m}-m|>\frac{t}{2}\bigg) \leq 2\exp(-2nb^2(t)),\]
where $b(t) = \min\big\{F(m+\frac{t}{2})-\half, \half-F(m-\frac{t}{2})\big\}$.
\end{Lemma}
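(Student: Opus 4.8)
The plan is to reduce the deviation of the sample median $\hat m$ to a deviation of an empirical count, and then to apply Hoeffding's inequality for bounded independent summands to that count. First I would split the event of interest by the union bound,
\[
P\!\left(|\hat{m}-m|>\frac{t}{2}\right) \le P\!\left(\hat{m}>m+\frac{t}{2}\right) + P\!\left(\hat{m}<m-\frac{t}{2}\right),
\]
and treat the two one-sided probabilities separately; the two arguments are entirely symmetric.

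For the upper tail I would apply Lemma~\ref{medFact} with $c = m+\frac{t}{2}$: on the event $\{\hat{m}>m+\frac{t}{2}\}$ we have $N_+ := \sum_{i=1}^n \1\!\left(X_i \le m+\frac{t}{2}\right) \le \frac{n}{2}$. Since $X_1,\ldots,X_n$ are i.i.d.\ with $P\!\left(X_i \le m+\frac{t}{2}\right) = F\!\left(m+\frac{t}{2}\right)$, the variable $N_+$ is a sum of $n$ i.i.d.\ $\{0,1\}$-valued random variables with mean $\mathbf{E}[N_+]=nF\!\left(m+\frac{t}{2}\right)$. By the definition of $b(t)$ we have $F\!\left(m+\frac{t}{2}\right) \ge \half + b(t)$, so
\[
P\!\left(\hat{m}>m+\frac{t}{2}\right) \le P\!\left(N_+ - \mathbf{E}[N_+] \le -n\, b(t)\right) \le \exp\!\left(-2n\, b^2(t)\right),
\]
the last step being Hoeffding's inequality. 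For the lower tail I would instead take $c = m-\frac{t}{2}$ in Lemma~\ref{medFact}: on $\{\hat{m}<m-\frac{t}{2}\}$ the count $N_- := \sum_{i=1}^n \1\!\left(X_i \le m-\frac{t}{2}\right) \ge \frac{n}{2}$, while $\mathbf{E}[N_-] = nF\!\left(m-\frac{t}{2}\right) \le n\!\left(\half - b(t)\right)$, so Hoeffding's inequality again gives $P\!\left(\hat{m}<m-\frac{t}{2}\right) \le \exp\!\left(-2n\, b^2(t)\right)$. Adding the two bounds yields the claim.

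There is no substantial obstacle here; the only points requiring care are bookkeeping ones. One must match the sample-median convention $\hat{m}=X_{(\lceil n/2\rceil)}$ underlying Lemma~\ref{medFact} with the direction of the count inequalities, so that the ``$\le n/2$'' and ``$\ge n/2$'' statements come out correctly for both even and odd $n$; and one should note that $b(t)\ge 0$ for $t>0$ (so that the exponential bound is meaningful), which follows because $F\!\left(m+\frac{t}{2}\right)\ge F(m)\ge \half$ and $F\!\left(m-\frac{t}{2}\right)< \half$ by the definition $m=F^{-1}\!\left(\half\right)=\inf\{x:F(x)\ge \half\}$ together with monotonicity and right-continuity of $F$. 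Notably, no regularity assumptions on $F$ (continuity, existence of a density, uniqueness of the median) are needed.
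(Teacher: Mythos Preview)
Your proposal is correct and follows essentially the same approach as the paper: split by a union bound, use Lemma~\ref{medFact} to convert each one-sided median deviation into an inequality on a binomial count, and apply Hoeffding's inequality to each tail. The paper carries $p_1-\tfrac12$ and $\tfrac12-p_2$ separately and only passes to $b(t)$ at the end, whereas you substitute $b(t)$ immediately, but this is purely cosmetic.
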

\begin{proof}
By Lemma~\ref{medFact},
\begin{align}
\label{leftm}
P\bigg(\hat{m}>m+\frac{t}{2}\bigg) & \le P\bigg(\Big|\Big\{X_i: X_i\leq m+\frac{t}{2}\Big\}\Big|\leq\frac{n}{2}\bigg) \notag \\
&= P\bigg(\sum_{i=1}^n\1\left\{X_i\leq m+\frac{t}{2}\right\}\leq\frac{n}{2}\bigg) \notag \\
&= P\bigg(\sum_{i=1}^n(Y_i-EY_i)\leq\frac{n}{2}-np_1\bigg) \notag \\
&= \exp\bigg[-2n\Big(p_1-\frac{1}{2}\Big)^2\bigg],
\end{align}
where $Y_i = \1\left\{X_i\leq m+\frac{t}{2}\right\}$ and $p_1 = F(m+\frac{t}{2})$, and the last inequality follows from Hoeffding's inequality. Similarly, we have
\begin{align}
\label{rightm}
P\bigg(\hat{m}<m-\frac{t}{2}\bigg) & \le P\bigg(\Big|\Big\{X_i: X_i\leq m-\frac{t}{2}\Big\}\Big|\geq\frac{n}{2}\bigg) \nonumber\\
&= P\bigg(\sum_{i=1}^n\1(X_i\leq m-\frac{t}{2})\geq\frac{n}{2}\bigg) \nonumber\\
&= P\bigg(\sum_{i=1}^n(Z_i-EZ_i)\geq\frac{n}{2}-np_2\bigg) \nonumber\\
&\leq \exp\bigg[-2n\Big(p_2-\frac{1}{2}\Big)^2\bigg],
\end{align}
where $Z_i =  \1\left\{X_i\leq m-\frac{t}{2}\right\}$ and $p_2 = F(m-\frac{t}{2})$. Combining expressions~\eqref{leftm} and \eqref{rightm}, we then obtain 
\[P\bigg(|\hat{m}-m|>\frac{t}{2}\bigg) \leq \exp\bigg[-2n\Big(p_1-\frac{1}{2}\Big)^2\bigg] + \exp\bigg[-2n\Big(p_2-\frac{1}{2}\Big)^2\bigg] \leq 2\exp(-2nb^2(t)).\]
\end{proof}

\begin{Lemma}\label{MADterm1.0}
Let $X_1, \ldots, X_n$ be a random sample from a distribution with cdf $F$.  Let $m$ and $d$ denote the population median and MAD, respectively, and let $\hat{m}$ and $\hat{d}$ denote the sample median and MAD. Let $G$ be the distribution of $|X_i-m|$.  Then
\begin{equation}
P(|\hat{d}-d|>t) \leq 6\exp(-2na^2(t)), \label{MADexptail}
\end{equation}
where
\[a(t) = \min\Bigg\{F\left(m+\frac{t}{2}\right)-\half, \; \half-F\left(m-\frac{t}{2}\right), \; G\left(d+\frac{t}{2}\right)-\half, \; \half-G\left(d-\frac{t}{2}\right)\Bigg\}.\]
\end{Lemma}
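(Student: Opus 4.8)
The plan is to split the deviation event by the direction of the error and by whether the sample median $\hat m$ is close to $m$, and then to reduce every piece to a one-sided binomial tail that can be controlled by Hoeffding's inequality, exactly as in the proofs of Lemmas~\ref{medFact} and~\ref{medFact2}. Concretely, I would write $\{|\hat d - d| > t\} = \{\hat d > d + t\} \cup \{\hat d < d - t\}$, and intersect each of these two events with $E := \{|\hat m - m| \le t/2\}$ and with its complement $E^c$, giving four pieces to bound.

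On $E^c$, I would bound $P(\{\hat d > d+t\}\cap E^c) \le P(|\hat m - m| > t/2)$, and the argument in the proof of Lemma~\ref{medFact2} (via Lemma~\ref{medFact} and Hoeffding) gives $P(|\hat m - m| > t/2) \le \exp(-2n(F(m+t/2)-\half)^2) + \exp(-2n(\half - F(m-t/2))^2)$; since $a(t)$ is a minimum that includes both $F(m+t/2)-\half$ and $\half - F(m-t/2)$, each of these two terms is at most $\exp(-2na^2(t))$. Carrying out the identical bound for $\{\hat d < d-t\}\cap E^c$ contributes two more such terms, for a total of four coming from $E^c$.

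On $E$, I would use $\hat d = W_{(k^*)}$ with $W_k = |X_k - \hat m|$ and $k^* = \lceil n/2\rceil$, together with the Lipschitz bound $\big||X_k - \hat m| - |X_k - m|\big| \le |\hat m - m| \le t/2$, valid on $E$ for every $k$. On $E \cap \{\hat d > d+t\}$, Lemma~\ref{medFact} applied to the $W_k$'s gives $|\{k : W_k \le d+t\}| \le n/2$; since on $E$ the inequality $|X_k - m| \le d + t/2$ forces $W_k \le d+t$, this yields $\big|\{k : |X_k - m| \le d+t/2\}\big| \le n/2$, which is a lower-tail event for a binomial variable with $n$ trials and success probability $G(d+t/2) > \half$, so Hoeffding bounds its probability by $\exp(-2n(G(d+t/2)-\half)^2) \le \exp(-2na^2(t))$. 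Symmetrically, on $E \cap \{\hat d < d-t\}$, Lemma~\ref{medFact} gives $|\{k : W_k \le d-t\}| \ge n/2$, and on $E$ the inequality $W_k \le d-t$ forces $|X_k - m| \le d - t/2$, so $\big|\{k : |X_k - m| \le d - t/2\}\big| \ge n/2$, an upper-tail event for a binomial with success probability $G(d-t/2) < \half$, bounded by $\exp(-2n(\half - G(d-t/2))^2) \le \exp(-2na^2(t))$. Summing the four terms from $E^c$ together with these two gives $P(|\hat d - d| > t) \le 6\exp(-2na^2(t))$.

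The difficulty here is bookkeeping rather than conceptual: one must handle the $\lceil n/2 \rceil$ convention in the definition of $\hat d$ carefully so that the ``at least/at most $n/2$'' comparisons come out with the correct inequality directions (hence the correct one-sided binomial tails), and one must line up each of the four arguments of the minimum defining $a(t)$ with the correct tail estimate. The degenerate cases in which some argument of $a(t)$ vanishes make the claimed bound trivial. As a minor remark, the constant $6$ is not optimal — factoring $P(E^c)$ out of both directions at once rather than bounding each $\{\hat d > d\pm t\}\cap E^c$ separately would give $4$ — but $6$ is all that is needed in the applications (Lemmas~\ref{MADterm1} and~\ref{MADconsistency}).
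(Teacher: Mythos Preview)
Your proposal is correct and essentially identical to the paper's proof: the paper also splits into $\{\hat d>d+t\}$ and $\{\hat d<d-t\}$, intersects each with $\{|\hat m-m|\le t/2\}$ and its complement, uses Lemma~\ref{medFact} and the triangle inequality $\big||X_k-\hat m|-|X_k-m|\big|\le|\hat m-m|$ to reduce to binomial tails in $G(d\pm t/2)$, and bounds the complement via Lemma~\ref{medFact2}, arriving at the same $4+2=6$ count. Your remark that the constant could be improved to $4$ by factoring $P(E^c)$ only once is also correct.
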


\begin{proof}
Let $W_i = |X_i-\hat{m}|$. By the definition of the sample MAD, Lemma~\ref{medFact} gives
\begin{align*}
P(\hat{d} > d + t) & \le P\bigg(|\{W_i: W_i\leq d+t\}|\leq\frac{n}{2}\bigg) \\
&= P\bigg(|\{X_i: |X_i-\hat{m}| \leq d+t\}|\leq\frac{n}{2}\bigg) \\
&\leq P\bigg(|\{X_i: |X_i-\hat{m}| \leq d+t\}|\leq\frac{n}{2}, \text{ and } |\hat{m}-m|\leq\frac{t}{2}\bigg) + P\bigg(|\hat{m}-m|>\frac{t}{2}\bigg) \\
&\leq P\bigg(\Big|\Big\{X_i: |X_i-m| \leq d+\frac{t}{2}\Big\}\Big|\leq\frac{n}{2}\bigg) + P\bigg(|\hat{m}-m|>\frac{t}{2}\bigg) \\
&= P\bigg(\sum_{i=1}^n\1\left\{|X_i-m| \leq d+\frac{t}{2}\right\}\leq\frac{n}{2}\bigg) + P\bigg(|\hat{m}-m|>\frac{t}{2}\bigg) \\
&= P\bigg(\sum_{i=1}^n(Y_i-EY_i)\leq\frac{n}{2}-np_3\bigg) + P\bigg(|\hat{m}-m|>\frac{t}{2}\bigg),
\end{align*}
where $Y_i = \1\left\{|X_i-m| \leq d+\frac{t}{2}\right\}$ and $p_3 = G(d+\frac{t}{2})$. Then by Hoeffding's inequality and Lemma~\ref{medFact2}, the last quantity is bounded by
\begin{equation}
\label{leftd}
\exp\bigg[-2n\Big(p_3-\frac{1}{2}\Big)^2\bigg] + 2\exp(-2nb^2(t)).
\end{equation}
Similarly,
\begin{align*}
P(\hat{d} < d - t) & \le P\bigg(|\{W_i: W_i\leq d-t\}|\geq\frac{n}{2}\bigg) \\
&= P\bigg(|\{X_i: |X_i-\hat{m}| \leq d-t\}|\geq\frac{n}{2}\bigg) \\
&\leq P\bigg(|\{X_i: |X_i-\hat{m}| \leq d-t\}|\geq\frac{n}{2}, \text{ and } |\hat{m}-m|\leq\frac{t}{2}\bigg) + P\bigg(|\hat{m}-m|>\frac{t}{2}\bigg) \\
&\leq P\bigg(\Big|\Big\{X_i: |X_i-m| \leq d-\frac{t}{2}\Big\}\Big|\geq\frac{n}{2}\bigg) + P\bigg(|\hat{m}-m|>\frac{t}{2}\bigg) \\
&= P\bigg(\sum_{i=1}^n\1\left\{|X_i-m| \leq d-\frac{t}{2}\right\}\geq\frac{n}{2}\bigg) + P\bigg(|\hat{m}-m|>\frac{t}{2}\bigg) \\
&= P\bigg(\sum_{i=1}^n(Z_i-EZ_i)\geq\frac{n}{2}-np_4\bigg) + P\bigg(|\hat{m}-m|>\frac{t}{2}\bigg),
\end{align*}
where $Z_i = \1\left\{|X_i-m| \leq d-\frac{t}{2}\right\}$ and $p_4 = G(d-\frac{t}{2})$. By Hoeffding's inequality and Lemma~\ref{medFact2}, the last quantity is upper-bounded by
\begin{equation}
\label{rightd}
\exp\bigg[-2n\Big(p_4-\frac{1}{2}\Big)^2\bigg] + 2\exp(-2nb^2(t)).
\end{equation}
Combining expressions~\eqref{leftd} and~\eqref{rightd} then yields
\[P(|\hat{d}-d|>t) \leq 4\exp(-2nb^2(t)) + \exp\bigg[-2n\Big(p_3-\frac{1}{2}\Big)^2\bigg] + \exp\bigg[-2n\Big(p_4-\frac{1}{2}\Big)^2\bigg] \leq 6\exp(-2na^2(t)).\]
\end{proof}

Next, we prove two population-level lemmas for the $\epsilon$-contamination model. As remarked in the introduction, we use the notation $F^{-1}(c) = \inf\{x: F(x) \ge c\}$, which is defined even if the cdf $F$ is not surjective on the interval $[0,1]$. Note that Lemmas~\ref{epsContamQuantilebound} and~\ref{MADterm2} do not impose any conditions on the contaminating distribution $H$.

\begin{Lemma}\label{epsContamQuantilebound}
Let $F = (1-\epsilon)\Phi_{\mu, \sigma} + \epsilon H$, where $\Phi_{\mu, \sigma}$ denotes the $N(\mu, \sigma^2)$ distribution and $H$ is an arbitrary distribution. Let $\Phi := \Phi_{0, 1}$ be the standard normal cdf and suppose that $0\leq\epsilon<1$. Then
\begin{equation}
\mu + \Phi^{-1}\Big(\frac{c-\epsilon}{1-\epsilon}\Big)\sigma = \Phi_{\mu, \sigma}^{-1}\left(\frac{c-\epsilon}{1-\epsilon}\right) \leq F^{-1}(c) \leq \Phi_{\mu, \sigma}^{-1} \left(\frac{c}{1-\epsilon}\right) = \mu + \Phi^{-1}\Big(\frac{c}{1-\epsilon}\Big)\sigma. \label{bound}
\end{equation}
\end{Lemma}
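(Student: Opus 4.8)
The plan is to sandwich the contaminated cdf $F$ between two rescaled copies of the Gaussian cdf and then invert. Since $H$ is a cdf, we have $0 \le H(x) \le 1$ for all $x$, so writing $F(x) = (1-\epsilon)\Phi_{\mu,\sigma}(x) + \epsilon H(x)$ immediately yields the two-sided bound
\begin{equation*}
(1-\epsilon)\Phi_{\mu,\sigma}(x) \le F(x) \le (1-\epsilon)\Phi_{\mu,\sigma}(x) + \epsilon, \qquad \forall x \in \R.
\end{equation*}
I will also record the elementary location-scale identity $\Phi_{\mu,\sigma}^{-1}(p) = \mu + \sigma\Phi^{-1}(p)$, which follows from $\Phi_{\mu,\sigma}(x) = \Phi((x-\mu)/\sigma)$ and the definition of the generalized inverse; this reduces both equalities in~\eqref{bound} to that single identity, so only the two inequalities remain.

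For the upper bound, set $x^* = \Phi_{\mu,\sigma}^{-1}\!\left(\frac{c}{1-\epsilon}\right)$. If $\frac{c}{1-\epsilon} \ge 1$ then $x^* = +\infty$ and there is nothing to prove, so assume $\frac{c}{1-\epsilon} < 1$. Since the Gaussian cdf is continuous (hence right-continuous), the set $\{x : \Phi_{\mu,\sigma}(x) \ge \frac{c}{1-\epsilon}\}$ is closed and its infimum $x^*$ satisfies $\Phi_{\mu,\sigma}(x^*) \ge \frac{c}{1-\epsilon}$. Multiplying by $1-\epsilon>0$ and applying the lower half of the sandwich gives $F(x^*) \ge (1-\epsilon)\Phi_{\mu,\sigma}(x^*) \ge c$, so $x^* \in \{x : F(x) \ge c\}$ and therefore $F^{-1}(c) = \inf\{x : F(x) \ge c\} \le x^*$.

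For the lower bound, set $y^* = \Phi_{\mu,\sigma}^{-1}\!\left(\frac{c-\epsilon}{1-\epsilon}\right)$. If $c \le \epsilon$ then $\frac{c-\epsilon}{1-\epsilon} \le 0$, so $y^* = -\infty$ and the bound is trivial; assume therefore $c > \epsilon$. For any $x < y^*$, the definition of $y^*$ as an infimum forces $\Phi_{\mu,\sigma}(x) < \frac{c-\epsilon}{1-\epsilon}$, and then the upper half of the sandwich gives
\begin{equation*}
F(x) \le (1-\epsilon)\Phi_{\mu,\sigma}(x) + \epsilon < (1-\epsilon)\cdot\frac{c-\epsilon}{1-\epsilon} + \epsilon = c.
\end{equation*}
Hence no point strictly below $y^*$ lies in $\{x : F(x) \ge c\}$, so $F^{-1}(c) \ge y^*$, which together with the previous paragraph establishes the full chain in~\eqref{bound}. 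This lemma is genuinely easy; the only thing that requires care is handling the generalized inverse $F^{-1}(c) = \inf\{x : F(x) \ge c\}$ cleanly for the possibly discontinuous $F$, and dispatching the degenerate regimes $c \le \epsilon$ and $c \ge 1-\epsilon$ (where one of the Gaussian quantiles is infinite) before running the monotone sandwich argument.
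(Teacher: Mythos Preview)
Your proof is correct and follows essentially the same approach as the paper: both evaluate $F$ at the candidate Gaussian quantiles (yours via the sandwich $(1-\epsilon)\Phi_{\mu,\sigma} \le F \le (1-\epsilon)\Phi_{\mu,\sigma}+\epsilon$, theirs by directly expanding $F$ at those points) and then invoke the definition of the generalized inverse. Your treatment is in fact slightly more careful than the paper's, since you explicitly dispatch the degenerate regimes $c\le\epsilon$ and $c\ge 1-\epsilon$ and handle the generalized inverse for discontinuous $F$ cleanly, whereas the paper simply appeals to monotonicity of $F$.
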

\begin{proof}
Let $F = (1-\epsilon)\Phi_{\mu, \sigma} + \epsilon H$.  Then
\begin{align}
F\Big(\Phi_{\mu, \sigma}^{-1}\Big(\frac{c}{1-\epsilon}\Big)\Big) &= (1-\epsilon)\Phi_{\mu, \sigma}\Big(\Phi_{\mu, \sigma}^{-1}\Big(\frac{c}{1-\epsilon}\Big)\Big) + \epsilon H\Big(\Phi_{\mu, \sigma}^{-1}\Big(\frac{c}{1-\epsilon}\Big)\Big) \nonumber\\
&\geq (1-\epsilon) \cdot \frac{c}{1-\epsilon} = c, \label{side1}
\end{align}
where by a slight abuse of notation, we use $F$ and $H$ to denote the cdfs of the corresponding distributions. In addition,
\begin{align}
1-F\Big(\Phi_{\mu, \sigma}^{-1}\Big(\frac{c-\epsilon}{1-\epsilon}\Big)\Big) &= (1-\epsilon)\bigg[1-\Phi_{\mu, \sigma}\Big(\Phi_{\mu, \sigma}^{-1}\Big(\frac{c-\epsilon}{1-\epsilon}\Big)\Big)\bigg] + \epsilon\bigg[1- H\Big(\Phi_{\mu, \sigma}^{-1}\Big(\frac{c-\epsilon}{1-\epsilon}\Big)\Big)\bigg] \nonumber\\
&\geq (1-\epsilon) \left(1 - \frac{c-\epsilon}{1-\epsilon}\right) = 1-c. \label{side2}
\end{align}
Combining equations~\eqref{side1} and~\eqref{side2}, and using the facts that $F$ is monotonically increasing, we then obtain the desired bound~\eqref{bound}. Note that the outer equalities hold since $\Phi_{\mu, \sigma}^{-1}(x) = \mu + \Phi^{-1}(x)\sigma$.
\end{proof}

\begin{Lemma}\label{MADterm2}
Let $F = (1-\epsilon)\Phi_{\mu, \sigma} + \epsilon H$, where $\Phi_{\mu, \sigma}$ denotes the $N(\mu, \sigma^2)$ distribution and $H$ is an arbitrary distribution. Suppose $0\leq\epsilon\leq\frac{1}{16}$. Let $d(F)$ and $d(\Phi_{\mu, \sigma})$ denote the population MADs corresponding to $F$ and $\Phi_{\mu, \sigma}$, respectively.  Then
\begin{align*}
|d(F)-d(\Phi_{\mu, \sigma})| \leq 4.8\sigma\epsilon.
\end{align*}
\end{Lemma}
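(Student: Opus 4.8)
Since the population MAD is location–scale equivariant — replacing $X$ by $(X-\mu)/\sigma$ turns $\Phi_{\mu,\sigma}$ into $\Phi := \Phi_{0,1}$, turns $H$ into some other (still arbitrary) distribution, and multiplies both $d(F)$ and $d(\Phi_{\mu,\sigma})$ by $1/\sigma$ — the plan is to reduce to the case $\mu=0$, $\sigma=1$ and prove $|d(F)-\Phi^{-1}(3/4)|\le 4.8\,\epsilon$; the general claim then follows by rescaling by $\sigma$. Throughout I write $m := m(F)=F^{-1}(1/2)$, $d_0:=\Phi^{-1}(3/4)=d(\Phi)$, and let $G$ denote the cdf of $|X-m|$ when $X\sim F$, so $d(F)=G^{-1}(1/2)$. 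The argument mimics the quantile sandwich in the proof of Lemma~\ref{epsContamQuantilebound}, applied this time to $G$.

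\textbf{Step 1 (median).} Applying Lemma~\ref{epsContamQuantilebound} with $c=1/2$ and $\mu=0$, $\sigma=1$, and using $\Phi^{-1}\big(\tfrac{1/2-\epsilon}{1-\epsilon}\big)=-\Phi^{-1}\big(\tfrac{1}{2(1-\epsilon)}\big)$, I get $|m|\le \Phi^{-1}\big(\tfrac{1}{2(1-\epsilon)}\big)$. Since $\epsilon\le 1/16$ the argument $\tfrac{1}{2(1-\epsilon)}=\tfrac12+\tfrac{\epsilon/2}{1-\epsilon}$ stays in a small neighbourhood of $1/2$, and a mean–value bound on $\Phi^{-1}$ there (its derivative is $1/\phi(\Phi^{-1}(\cdot))$, of size about $\sqrt{2\pi}$ near $1/2$) yields $|m|\le c_0\,\epsilon$ for an explicit constant — one may take $c_0=3/2$.

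\textbf{Step 2 (sandwich for $G$).} From $(1-\epsilon)\Phi\le F\le(1-\epsilon)\Phi+\epsilon$ pointwise, the continuity of $\Phi$, and the elementary bracketing $P(m-t<X\le m+t)\le G(t)\le P(m-t\le X\le m+t)$ (which takes care of possible atoms of $F$), one obtains for every $t\ge 0$
\[
(1-\epsilon)\big[\Phi(m+t)-\Phi(m-t)\big]-\epsilon \;\le\; G(t)\;\le\;(1-\epsilon)\big[\Phi(m+t)-\Phi(m-t)\big]+\epsilon .
\]
Feeding in $|m|\le c_0\epsilon$ and $\Phi(-u)=1-\Phi(u)$ then gives
\[
(1-\epsilon)\big[2\Phi(t-c_0\epsilon)-1\big]-\epsilon\;\le\; G(t)\;\le\;(1-\epsilon)\big[2\Phi(t+c_0\epsilon)-1\big]+\epsilon .
\]

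\textbf{Step 3 (pinning down $d(F)$).} To conclude $d(F)\le d_0+s$ it suffices that $G(d_0+s)\ge 1/2$; by the left inequality this holds once $\Phi\big(d_0+s-c_0\epsilon\big)\ge \tfrac{3}{4(1-\epsilon)}$, i.e. once $s\ge c_0\epsilon+\big[\Phi^{-1}\big(\tfrac{3}{4(1-\epsilon)}\big)-\Phi^{-1}\big(\tfrac34\big)\big]$. The bracketed quantile gap has argument displacement $\tfrac34\cdot\tfrac{\epsilon}{1-\epsilon}$, which is small for $\epsilon\le 1/16$, so a mean–value bound on $\Phi^{-1}$ near $3/4$ controls it, and the requirement on $s$ is met by $s=4.8\epsilon$. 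Symmetrically, $d(F)\ge d_0-s'$ provided $G(t)<1/2$ for $t<d_0-s'$, which by the right inequality reduces to $\Phi(t+c_0\epsilon)<\tfrac{3/4-\epsilon}{1-\epsilon}$ and yields an admissible $s'$ no larger than $4.8\epsilon$. Hence $|d(F)-d_0|\le 4.8\epsilon$, and rescaling by $\sigma$ gives the stated bound. The only real work is arithmetic bookkeeping — fixing $c_0$ and bounding the Lipschitz constants of $\Phi^{-1}$ on the small intervals near $1/2$ and near $3/4$ (both kept small by $\epsilon\le 1/16$) sharply enough that $c_0\epsilon$ plus the quantile gap near $3/4$ stays under $4.8\epsilon$; nothing is conceptually delicate.
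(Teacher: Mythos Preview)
Your proof is correct and follows essentially the same route as the paper: both arguments sandwich the contaminated median via Lemma~\ref{epsContamQuantilebound}, then trap $d(F)$ by bounding $P(|X-m|\le a)$ above and below through the normal component, and finish with a Lipschitz bound on $\Phi^{-1}$ near $1/2$ and $3/4$ (the paper's Lemma~\ref{quantile}, constant $3.6$). Your location--scale reduction and the explicit two-sided sandwich for $G$ are clean cosmetic choices; the paper instead picks specific values of $a$ and verifies $P(|X-m|\le a)\gtrless 1/2$ directly, but the underlying arithmetic---and the reason $4.8\sigma\epsilon$ comes out---is the same.
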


\begin{proof}

By an abuse of notation, we also use $F$ to denote the cdf of the contaminated distribution. Then $F^{-1}$ is the quantile function. Note in particular that the following statements hold, where $X \sim F$, as an easy consequence of the definition of $F^{-1}$:
\begin{itemize}
\item[(i)] $d(F) \le a$ if $P(|X - F^{-1}(0.5)| \le a) \ge 0.5$,
\item[(ii)] $d(F) > a$ if $P(|X - F^{-1}(0.5)| \le a) < 0.5$.
\end{itemize}
Furthermore, we may write
\begin{align*}
P(|X - F^{-1}(0.5)| \le a) &\ge (1-\epsilon) \cdot P(|Z - F^{-1}(0.5)| \le a) \\
&= (1-\epsilon) \left\{\Phi_{\mu, \sigma} \left(F^{-1}(0.5) + a\right) - \Phi_{\mu, \sigma} \left(F^{-1}(0.5) - a\right)\right\},
\end{align*}
where $Z \sim N(\mu, \sigma^2)$. By Lemma~\ref{epsContamQuantilebound}, the last expression is further lower-bounded by
\begin{equation*}
(1-\epsilon)\left\{\Phi_{\mu, \sigma} \left(\Phi_{\mu, \sigma}^{-1}\left(\frac{0.5-\epsilon}{1-\epsilon}\right) + a\right) - \Phi_{\mu, \sigma} \left(\Phi_{\mu, \sigma}^{-1}\left(\frac{0.5}{1-\epsilon}\right) - a\right)\right\}.
\end{equation*}
We will take
\begin{equation*}
a = \Phi_{\mu, \sigma}^{-1}\left(\frac{0.75}{1-\epsilon}\right) - \Phi_{\mu, \sigma}^{-1}\left(\frac{0.5 - \epsilon}{1-\epsilon}\right) = \Phi_{\mu, \sigma}^{-1}\left(\frac{0.5}{1-\epsilon}\right) - \Phi_{\mu, \sigma}^{-1}\left(\frac{0.25 - \epsilon}{1-\epsilon}\right),
\end{equation*}
where the second inequality comes from the fact that $\Phi_{\mu, \sigma}^{-1}(b) = -\Phi_{\mu, \sigma}^{-1}(1-b)$. Then the lower bound becomes
\begin{equation*}
(1-\epsilon) \left(\frac{0.75}{1-\epsilon} - \frac{0.25 - \epsilon}{1-\epsilon} \right) \ge 0.5.
\end{equation*}
Putting the bounds together, we have
\begin{equation*}
P(|X - F^{-1}(0.5)| \le a) \ge 0.5,
\end{equation*}
so by the implication (i) above, it follows that
\begin{equation}
\label{EqnDupper}
d(F) \le \Phi_{\mu, \sigma}^{-1}\left(\frac{0.75}{1-\epsilon}\right) - \Phi_{\mu, \sigma}^{-1}\left(\frac{0.5 - \epsilon}{1-\epsilon}\right).
\end{equation}
Similarly, we may derive a lower bound on $d(F)$ by writing
\begin{equation*}
P(|X - F^{-1}(0.5)| > a) \ge (1-\epsilon) \cdot P(|Z - F^{-1}(0.5)| > a),
\end{equation*}
where $Z \sim N(\mu, \sigma^2)$. Furthermore,
\begin{align*}
P(|Z - F^{-1}(0.5)| \le a) & = \Phi_{\mu, \sigma}\left(F^{-1}(0.5) + a\right) - \Phi_{\mu, \sigma}\left(F^{-1}(0.5) - a\right) \\
& \le \Phi_{\mu, \sigma}\left(\Phi_{\mu, \sigma}^{-1}\left(\frac{0.5}{1-\epsilon}\right) + a\right) - \Phi_{\mu, \sigma}\left(\Phi_{\mu, \sigma}^{-1}\left(\frac{0.5 - \epsilon}{1-\epsilon}\right) - a\right),
\end{align*}
using Lemma~\ref{epsContamQuantilebound}. Taking
\begin{equation*}
a = \Phi_{\mu, \sigma}^{-1}\left(\frac{0.75 - 2\epsilon}{1-2\epsilon}\right) - \Phi_{\mu, \sigma}^{-1}\left(\frac{0.5}{1-\epsilon}\right) = \Phi_{\mu, \sigma}^{-1}\left(\frac{0.5 - \epsilon}{1-\epsilon}\right) - \Phi_{\mu, \sigma}^{-1}\left(\frac{0.25}{1-2\epsilon}\right),
\end{equation*}
we then have the bound
\begin{equation*}
P(|Z - F^{-1}(0.5)| \le a) \le \frac{0.75 - 2\epsilon}{1-2\epsilon} - \frac{0.25}{1-2\epsilon} = \frac{0.5 - 2\epsilon}{1-2\epsilon},
\end{equation*}
implying that
\begin{equation*}
P(|X - F^{-1}(0.5)| > a) \ge (1-\epsilon) \cdot \left(1 - \frac{0.5 - 2\epsilon}{1-2\epsilon}\right) > 0.5.
\end{equation*}
It follows that
\begin{equation*}
P(|X - F^{-1}(0.5)| \le a) < 0.5,
\end{equation*}
so by implication (ii) above,
\begin{equation}
\label{EqnDlower}
d(F) > \Phi^{-1}_{\mu, \sigma}\left(\frac{0.75 - 2\epsilon}{1-2\epsilon}\right) - \Phi_{\mu, \sigma}^{-1}\left(\frac{0.5}{1-\epsilon}\right).
\end{equation}

%Note that we always have {\red Note: check if the following still holds if $H$ is not continuous.}
%\begin{align*}
%d(F) &\geq\min\{F^{-1}(0.75)-F^{-1}(0.5), F^{-1}(0.5)-F^{-1}(0.25)\}, \quad \text{and} \\
%d(F) &\leq \max\{F^{-1}(0.75)-F^{-1}(0.5), F^{-1}(0.5)-F^{-1}(0.25)\}.
%\end{align*}
%For instance, the first inequality follows from the fact that $d(F)$ is the median of the distribution of $|X_i - F^{-1}(0.5)|$, and we know that $\frac{1}{4}$ of the mass of the original distribution lies in each of the intervals $[F^{-1}(0.25), F^{-1}(0.5)]$ and $[F^{-1}(0.5), F^{-1}(0.75)]$. {\red not necessarily have $1/4$ mass on both sides if $F$ is asymmetric?}

%By Lemma~\ref{epsContamQuantilebound}, we then have
%\begin{align}
%d(F) &\geq \min\{F^{-1}(0.75)-F^{-1}(0.5), F^{-1}(0.5)-F^{-1}(0.25)\} \nonumber\\
%&\geq \sigma\min\bigg\{\Phi^{-1}\bigg(\frac{0.75-\epsilon}{1-\epsilon}\bigg)-\Phi^{-1}\bigg(\frac{0.5}{1-\epsilon}\bigg), \Phi^{-1}\bigg(\frac{0.5-\epsilon}{1-\epsilon}\bigg)-\Phi^{-1}\bigg(\frac{0.25}{1-\epsilon}\bigg)\bigg\} \label{MADterm2left}
%\end{align}
%and
%\begin{align}
%d(F) &\leq \max\{F^{-1}(0.75)-F^{-1}(0.5), F^{-1}(0.5)-F^{-1}(0.25)\} \nonumber\\
%&\leq \sigma\max\bigg\{\Phi^{-1}\bigg(\frac{0.75}{1-\epsilon}\bigg)-\Phi^{-1}\bigg(\frac{0.5-\epsilon}{1-\epsilon}\bigg), \Phi^{-1}\bigg(\frac{0.5}{1-\epsilon}\bigg)-\Phi^{-1}\bigg(\frac{0.25-\epsilon}{1-\epsilon}\bigg)\bigg\}. \label{MADterm2right}
%\end{align}

Using the fact that $d(\Phi_{\mu, \sigma}) = \Phi^{-1}_{\mu, \sigma}(0.75)$ and $\Phi_{\mu, \sigma}^{-1}(0.5) = 0$, inequality~\eqref{EqnDupper} implies that
\begin{align*}
d(F) - d(\Phi_{\mu, \sigma}) & \le \left\{\Phi_{\mu, \sigma}^{-1}\left(\frac{0.75}{1-\epsilon}\right) - \Phi_{\mu, \sigma}^{-1}(0.75)\right\} + \left\{\Phi_{\mu, \sigma}^{-1}(0.5) - \Phi_{\mu, \sigma}^{-1}\left(\frac{0.5 - \epsilon}{1-\epsilon}\right)\right\} \\
& \le 3.6 \sigma \left\{\left(\frac{0.75}{1-\epsilon} - 0.75\right) + \left(0.5 - \frac{0.5 - \epsilon}{1-\epsilon}\right)\right\} \\
& = 3.6 \sigma \cdot \frac{1.25\epsilon}{1-\epsilon} \\
& \le 4.8 \sigma \epsilon, 
\end{align*}
where the second inequality comes from Lemma~\ref{quantile} and the observation $\Phi^{-1}_{\mu, \sigma}(x) = \mu + \sigma \Phi^{-1}_{0, 1}(x)$, along with the assumption $\epsilon \le \frac{1}{16}$. Similarly, inequality~\eqref{EqnDlower} implies that
\begin{align*}
d(F) - d(\Phi_{\mu, \sigma}) & \ge \left\{\Phi_{\mu, \sigma}^{-1}\left(\frac{0.75 - 2\epsilon}{1-2\epsilon}\right) - \Phi_{\mu, \sigma}^{-1}(0.75)\right\} + \left\{\Phi_{\mu, \sigma}^{-1}(0.5) - \Phi_{\mu, \sigma}^{-1}\left(\frac{0.5}{1-\epsilon}\right) \right\} \\
& \ge -3.6 \sigma \left\{\left(0.75 - \frac{0.75 - 2\epsilon}{1-2\epsilon}\right) + \left(\frac{0.5}{1-\epsilon} - 0.5\right)\right\} \\
& = -3.6 \sigma \left(\frac{0.5\epsilon}{1-2\epsilon} + \frac{0.5\epsilon}{1-\epsilon}\right) \\
& \ge -3.98 \sigma \epsilon.
\end{align*}
Thus, we have the desired result.
\end{proof}

We conclude with the main lemma of this section, which establishes the consistency of the sample MAD to its population-level version.

\begin{Lemma}\label{MADterm1}
Let $X_1, \ldots, X_n$ be a random sample from $F = (1-\epsilon)\Phi_{\mu, \sigma}+\epsilon H$, where $0\leq\epsilon\leq\frac{1}{16}$, $\Phi_{\mu, \sigma}$ denotes the $N(\mu, \sigma^2)$ distribution, and $H$ is an arbitrary distribution.  Let $d := d(F)$ be the population MAD corresponding to $F$, and let $\hat{d}$ be the sample MAD.  Then for $0<t<1$, we have
\begin{equation}
P(|\hat{d} - d|>t) \leq 6\exp(-2nc^2(\sigma)t^2), \label{MADtail}
\end{equation}
where $c(\sigma) = \frac{15}{64\sqrt{2\pi}\sigma}\exp\left(-\frac{(1.1\sigma+0.5)^2}{2\sigma^2}\right)$.
\end{Lemma}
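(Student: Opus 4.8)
The plan is to reduce everything to Lemma~\ref{MADterm1.0}, which already gives $P(|\hat d - d| > t) \le 6\exp(-2na^2(t))$ for a sample from an \emph{arbitrary} cdf $F$, where $a(t) = \min\{F(m+t/2) - \tfrac12,\ \tfrac12 - F(m-t/2),\ G(d+t/2) - \tfrac12,\ \tfrac12 - G(d-t/2)\}$, with $m$ and $d := d(F)$ the population median and MAD of $F$ and $G$ the cdf of $|X-m|$ for $X\sim F$. Thus it suffices to prove $a(t) \ge c(\sigma)\,t$ for all $0 < t < 1$ when $F = (1-\epsilon)\Phi_{\mu,\sigma} + \epsilon H$ with $\epsilon \le \tfrac1{16}$; plugging this into Lemma~\ref{MADterm1.0} yields the claimed bound $6\exp(-2nc^2(\sigma)t^2)$.

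The first step is to bound each of the four terms of $a(t)$ below by $(1-\epsilon)\tfrac{t}{2}$ times a uniform lower bound on the $N(\mu,\sigma^2)$ density over a suitable interval. Here I use two facts: (a) by right-continuity of $F$ and the definition $m = F^{-1}(0.5)$ we have $F(m) \ge \tfrac12$ and $F(m^-) \le \tfrac12$, and similarly $G(d) \ge \tfrac12$, $G(d^-) \le \tfrac12$; and (b) since $H$ is nondecreasing, $F(v) - F(u) \ge (1-\epsilon)[\Phi_{\mu,\sigma}(v) - \Phi_{\mu,\sigma}(u)]$ for $u < v$, and the same holds with one-sided limits in place of $F$-values. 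For example, $F(m+t/2) - \tfrac12 \ge F(m+t/2) - F(m) \ge (1-\epsilon)[\Phi_{\mu,\sigma}(m+t/2) - \Phi_{\mu,\sigma}(m)]$ and $\tfrac12 - F(m-t/2) \ge F(m^-) - F(m-t/2) \ge (1-\epsilon)[\Phi_{\mu,\sigma}(m) - \Phi_{\mu,\sigma}(m-t/2)]$. For the two $G$-terms I write $G(d+t/2) - G(d) = P(m+d < X \le m+d+t/2) + P(m-d-t/2 \le X < m-d)$ and $G(d^-) - G(d-t/2) = P(m+d-t/2 < X < m+d) + P(m-d < X < m-d+t/2)$, and keep just one of the two pieces in each case, bounding it below via (b) by $(1-\epsilon)$ times a normal increment over an interval of width $t/2$ centered near $m+d$. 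By the mean value theorem every one of these normal increments is at least $\tfrac{t}{2}\inf\{\phi_{\mu,\sigma}(x) : x \in I\}$ for an interval $I$ all of whose points satisfy $|x - m| \le d + t/2$.

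The second step is to locate $m$ and $d$, hence these intervals, relative to $\mu$. Applying Lemma~\ref{epsContamQuantilebound} with $c = 0.5$, and using $\Phi_{\mu,\sigma}^{-1}\!\big(\tfrac{0.5-\epsilon}{1-\epsilon}\big) = \mu - \sigma\Phi^{-1}\!\big(\tfrac{0.5}{1-\epsilon}\big)$, gives $|m - \mu| \le \sigma\,\Phi^{-1}\!\big(\tfrac{0.5}{1-\epsilon}\big) \le 0.1\sigma$ for $\epsilon \le \tfrac1{16}$; and Lemma~\ref{MADterm2} together with $d(\Phi_{\mu,\sigma}) = \sigma\Phi^{-1}(0.75)$ gives $d \le \sigma\Phi^{-1}(0.75) + 4.8\sigma\epsilon < \sigma$. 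Since $t/2 < 1/2$, every relevant $x$ obeys $|x - \mu| \le |m-\mu| + d + t/2 < 1.1\sigma + 0.5$, so $\phi_{\mu,\sigma}(x) \ge \frac{1}{\sqrt{2\pi}\sigma}\exp\!\big(-\frac{(1.1\sigma+0.5)^2}{2\sigma^2}\big)$. Combining the two steps and using $1-\epsilon \ge \tfrac{15}{16}$ yields $a(t) \ge (1-\epsilon)\tfrac{t}{2}\cdot\frac{1}{\sqrt{2\pi}\sigma}\exp\!\big(-\frac{(1.1\sigma+0.5)^2}{2\sigma^2}\big) \ge c(\sigma)\,t$, with room to spare, and the lemma follows from Lemma~\ref{MADterm1.0}.

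I expect the main obstacle to be the careful bookkeeping in the first step when $H$ (and hence $F$ and $G$) has atoms: one must phrase each of the four inequalities using one-sided limits so that a jump of size up to $\epsilon$ at $m$ or at $m \pm d$ does not destroy the "$\pm t/2$ shift captures genuine normal mass" argument, and one must verify that increments of $G$ over a window of width $t/2$ really do dominate the $F$-mass of the corresponding folded sub-interval. The constant chase in the second step — extracting $|x - \mu| \le 1.1\sigma + 0.5$ from the quantile bounds of Lemmas~\ref{epsContamQuantilebound} and~\ref{MADterm2}, with enough slack to absorb the factor in $c(\sigma)$ — is then routine.
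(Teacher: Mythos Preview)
Your proposal is correct and follows essentially the same route as the paper: reduce to Lemma~\ref{MADterm1.0}, lower-bound each of the four terms defining $a(t)$ by $(1-\epsilon)$ times a normal increment via the mean value theorem, and then localize $m$ and $d$ near $\mu$ using Lemmas~\ref{epsContamQuantilebound} and~\ref{MADterm2} to get the density lower bound $\phi_{\mu,\sigma}(x) \ge \frac{1}{\sqrt{2\pi}\sigma}\exp\big(-\frac{(1.1\sigma+0.5)^2}{2\sigma^2}\big)$ on $|x-\mu| \le 1.1\sigma + 0.5$. The one minor difference is that for $b_2$ and $b_4$ the paper inserts the intermediate points $m-\tfrac{t}{4}$ and $d-\tfrac{t}{4}$ (using $F(m-\tfrac t4)<\tfrac12$, $G(d-\tfrac t4)<\tfrac12$), which yields intervals of width $\tfrac t4$ and hence the factor $\tfrac14$ in $c(\sigma)=\tfrac{15}{64\sqrt{2\pi}\sigma}\exp(\cdots)$, whereas your use of the left limits $F(m^-)\le \tfrac12$ and $G(d^-)\le\tfrac12$ (exploiting continuity of $\Phi_{\mu,\sigma}$) keeps width $\tfrac t2$ and delivers $a(t)\ge 2c(\sigma)t$ --- this is exactly the ``room to spare'' you noted, and it is a clean improvement over the paper's bookkeeping.
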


\begin{proof}
By Lemma~\ref{MADterm1.0}, it suffices to show that
\[a(t) \geq c(\sigma)t,\]
for the $\epsilon$-contaminated distribution, with $a(t)$ as defined in the lemma. With an abuse of notation, let $F, \Phi_{\mu, \sigma}$, and $H$ denote the cdfs of the respective distributions. Let
\begin{equation*}
G(c) = P(|X_i-m| \leq c),
\end{equation*}
where $m$ denotes the median of the contaminated distribution. Note that by the definition of the median, we have $F(m) \ge \frac{1}{2}$ and $G(d) \ge \frac{1}{2}$. Define
\begin{align*}
b_1 &= F\Big(m+\frac{t}{2}\Big)-\half \ge F\Big(m+\frac{t}{2}\Big) - F(m), \\
b_2 &= \half-F\Big(m-\frac{t}{2}\Big) \ge F\left(m - \frac{t}{4}\right)-F\Big(m-\frac{t}{2}\Big), \\
b_3 &= G\Big(d+\frac{t}{2}\Big)-\half \ge G\Big(d+\frac{t}{2}\Big) - G(d), \qquad \text{and} \\
b_4 &= \half-G\Big(d-\frac{t}{2}\Big) \ge G\left(d - \frac{t}{4}\right) - G\Big(d-\frac{t}{2}\Big),
\end{align*}
where we have used the fact that $F\left(m - \frac{t}{4}\right) < \frac{1}{2}$ and $G\left(d - \frac{t}{4}\right) < \frac{1}{2}$ in the second and fourth inequalities. Then $a(t) = \min\{b_1, b_2, b_3, b_4\}$.

Note that
\begin{align*}
b_1 &\ge (1-\epsilon)\left(\Phi_{\mu, \sigma}\left(m + \frac{t}{2}\right) - \Phi_{\mu, \sigma}(m)\right) + \epsilon\left(H\left(m + \frac{t}{2}\right) - H(m)\right) \\
&\ge (1-\epsilon)\left(\Phi_{\mu, \sigma}\left(m + \frac{t}{2}\right) - \Phi_{\mu, \sigma}(m)\right).
\end{align*}
Similarly, we can check that
\begin{align*}
b_2 & \ge (1-\epsilon)\left(\Phi_{\mu, \sigma}\left(m - \frac{t}{4}\right) - \Phi_{\mu, \sigma}\left(m - \frac{t}{2}\right)\right), \\
b_3 & \ge (1-\epsilon) \left(G_\Phi\left(d + \frac{t}{2}\right) - G_\Phi(d)\right), \qquad \text{and} \\
b_4 & \ge (1-\epsilon) \left(G_\Phi\left(d - \frac{t}{4}\right) - G_\Phi\left(d + \frac{t}{2}\right)\right),
\end{align*}
where $G_\Phi(c) := \Phi_{\mu, \sigma}(m+c) - \Phi_{\mu, \sigma}(m-c)$. By the mean value theorem, we have $c_1, c_2, c_3$, and $c_4$ such that
\begin{align*}
b_1 & \ge (1-\epsilon) \Phi_{\mu, \sigma}'(c_1)\frac{t}{2}, & m \le c_1 \le m+\frac{t}{2}, \\
b_2 & \ge (1-\epsilon) \Phi_{\mu, \sigma}'(c_2)\frac{t}{4}, & m-\frac{t}{2} \le c_2 \le m - \frac{t}{4}, \\
b_3 & \ge (1-\epsilon) G_\Phi'(c_3)\frac{t}{2} = (1-\epsilon) \left(\Phi_{\mu, \sigma}'(m+c_3) + \Phi_{\mu, \sigma}'(m-c_3)\right) \frac{t}{2}, & d \le c_3 \le d+\frac{t}{2}, \\
b_4 & \ge (1-\epsilon) G_\Phi'(c_4)\frac{t}{4} = (1-\epsilon) \left(\Phi_{\mu, \sigma}'(m+c_4) + \Phi_{\mu, \sigma}'(m-c_4)\right)\frac{t}{4}, &d-\frac{t}{2} \le c_4 \le d - \frac{t}{4}.
\end{align*}
Note in particular that
\begin{equation*}
c_1, \; c_2, \; m+c_3, \; m-c_3, \; m+c_4, \; m-c_4 \in \left[m - d - \frac{t}{2}, \; m + d + \frac{t}{2}\right].
\end{equation*}
%\begin{multline}
%\min\big\{\Phi_\sigma'(c_1), \; \Phi_\sigma'(c_2), \; \Phi_\sigma'(m+c_3) + \Phi_\sigma'(m-c_3), \; \Phi_\sigma'(m+c_4) + \Phi_\sigma'(m-c_4)\big\} \\
%\geq \min\bigg\{\Phi_\sigma'(c): m-d-\frac{t}{2}\leq c\leq m+d+\frac{t}{2}\bigg\}. \label{lb}
%\end{multline}
%Thus, it suffices to establish a lower bound for the right-hand expression in inequality~\eqref{lb}.

Let $d(\Phi_{\mu, \sigma}) = \Phi^{-1}(0.75)\sigma$ be the MAD estimator corresponding to $\Phi_{\mu, \sigma}$.  By Lemma~\ref{epsContamQuantilebound}, for $0\leq\epsilon\leq\frac{1}{16}$, the median $m = F^{-1}(0.5)$ satisfies
\[\mu + \Phi^{-1}\bigg(\frac{7}{15}\bigg)\sigma \leq \mu + \Phi^{-1}\Big(\frac{1-2\epsilon}{2-2\epsilon}\Big)\sigma \leq m \leq \mu + \Phi^{-1}\Big(\frac{1}{2-2\epsilon}\Big)\sigma \leq \mu + \Phi^{-1}\bigg(\frac{8}{15}\bigg)\sigma.\]
In addition, Lemma~\ref{MADterm2} implies that for $0\leq\epsilon\leq\frac{1}{16}$, we have
\[d \leq d(\Phi_{\mu, \sigma}) + 4.8\sigma\epsilon \leq \Phi^{-1}(0.75)\sigma + 0.3\sigma \leq\sigma .\]
Therefore, for $c\in[m-d-\frac{t}{2}, m+d+\frac{t}{2}]$ and $0<t<1$, we have
\begin{align*}
c &\geq m - d - \frac{t}{2} \geq \mu + \Phi^{-1}\bigg(\frac{7}{15}\bigg)\sigma - \sigma - 0.5 \geq \mu -1.1\sigma - 0.5, \quad \text{and} \\
c &\leq m + d + \frac{t}{2} \leq \mu + \Phi^{-1}\bigg(\frac{8}{15}\bigg)\sigma + \sigma + 0.5 \leq \mu + 1.1\sigma + 0.5.
\end{align*}
Hence,
\begin{align*}
\min\bigg\{\Phi_{\mu, \sigma}'(c): m-d-\frac{t}{2}\leq c\leq m+d+\frac{t}{2}\bigg\} &\geq \min\{\Phi_{\mu, \sigma}'(c): |c-\mu| \leq 1.1\sigma + 0.5\} \\
%&= \min\{(1-\epsilon)\Phi_\sigma'(c) + \epsilon H'(c): |c| \leq 1.1\sigma + 0.5\} \\
%&\geq \frac{15}{16}\min\{\Phi_\sigma'(c): |c| \leq 1.1\sigma + 0.5\} \\
&= \frac{1}{\sqrt{2\pi}\sigma}\exp\bigg(-\frac{(1.1\sigma+0.5)^2}{2\sigma^2}\bigg). 
\end{align*}
It follows that
\begin{align*}
a(t) = \min\{b_1, b_2, b_3, b_4\} &\geq (1-\epsilon) \cdot \frac{1}{\sqrt{2\pi}\sigma}\exp\bigg(-\frac{(1.1\sigma+0.5)^2}{2\sigma^2}\bigg)\frac{t}{4} \\
&\geq \frac{15}{16 \sqrt{2\pi} \sigma} \exp\bigg(-\frac{(1.1\sigma+0.5)^2}{2\sigma^2}\bigg)\frac{t}{4} = c(\sigma)t.
\end{align*}
\end{proof}

%%%%%%%%%%%%%%%%%%%%%%%%%%%%%%%%%%%%%%%%%%%%%%%%%%%%%%%%%%%%%%%%%%%%%%%%%%%%%%
\section{Auxiliary lemmas}
\label{AppAux}

We begin with a lemma describing the behavior of the mean of the Kendall's tau statistic under a contaminated normal distribution. Note that the statement of the lemma does not depend on the variances of the uncontaminated marginals, or the contaminating distribution $H$.

\begin{Lemma}\label{exptau}
Let $(X_{k1}, X_{k2})$, for $k = 1, \ldots, n$, be a random sample from
\[F = (1-\gamma)\Phi_\rho + \gamma H,\]
where $\Phi_\rho$ is a bivariate normal distribution with correlation $\rho$ and $H$ is an arbitrary bivariate distribution.  Let $\rho^K = E_F(r^K)$, where $r^K$ is Kendall's tau statistic.  Then
\begin{equation*}
\rho^K = \frac{2}{\pi}\sin^{-1}(\rho) + R, \label{exptauequal}
\end{equation*}
where $|R| \leq 12\gamma + 17\gamma^2$.
\end{Lemma}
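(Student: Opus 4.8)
The plan is to reduce the claim --- which is a purely population-level statement --- to the classical arcsine law for Kendall's $\tau$ under a bivariate normal, using only the mixture structure of $F$. First I would note that $r^K$ is a $U$-statistic of degree two with the symmetric kernel $\big((x_1,x_2),(y_1,y_2)\big)\mapsto \sign(x_1-y_1)\sign(x_2-y_2)$ (symmetry holding even with the convention $\sign(0)=0$), so by linearity of expectation $\rho^K = E_F[W]$, where $W := \sign(X_{11}-X_{21})\sign(X_{12}-X_{22})$ and $(X_{11},X_{12})$, $(X_{21},X_{22})$ are two independent draws from $F$; in particular $n$ plays no role.

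The key step is to generate each draw by an independent coin flip that selects the $\Phi_\rho$ component with probability $1-\gamma$ and $H$ with probability $\gamma$, and then to condition on the event $A$ that both draws land in the normal component, which has probability $p=(1-\gamma)^2$. On $A$ the two observations are i.i.d.\ $\Phi_\rho$, so $E_F[W\mid A] = \frac{2}{\pi}\sin^{-1}(\rho)$ by the classical arcsine identity for Kendall's $\tau$ of a bivariate normal recalled in Section~\ref{SecBackground} (ties occurring with probability zero under $\Phi_\rho$), whereas on $A^c$ we have $|W|\le 1$, so $E_F[W\mid A^c]\in[-1,1]$. Therefore
\[
\rho^K = p\cdot\frac{2}{\pi}\sin^{-1}(\rho) + (1-p)\,E_F[W\mid A^c],
\]
and hence $R := \rho^K - \frac{2}{\pi}\sin^{-1}(\rho) = (1-p)\big(E_F[W\mid A^c] - \frac{2}{\pi}\sin^{-1}(\rho)\big)$, which gives $|R|\le 2(1-p) = 2(2\gamma-\gamma^2)$, already implying the stated bound $|R|\le 12\gamma+17\gamma^2$. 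If one prefers to reproduce those exact constants, one can instead split $A^c$ into the sub-events ``exactly one draw contaminated'' (probability $2\gamma(1-\gamma)$) and ``both contaminated'' (probability $\gamma^2$) and estimate the two conditional expectations separately.

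I do not expect any serious obstacle in this lemma: the sole external input is the classical arcsine formula, and the rest is elementary bookkeeping with the mixture. The genuinely delicate analogue is Lemma~\ref{exprho} for Spearman's $\rho$, whose population functional involves three observations plus a finite-$n$ correction of order $1/n$, so the same conditioning must be carried out over the finer partition generated by the contamination status of three independent draws; I would set up the Kendall's $\tau$ argument so that this extension is transparent.
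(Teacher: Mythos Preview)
Your proposal is correct and in fact yields a sharper bound, $|R|\le 4\gamma-2\gamma^2$, than the $12\gamma+17\gamma^2$ stated. The paper takes a genuinely different and more laborious route: it writes $\sign(X)=\signbar(X)-\1(X=0)$ with $\signbar(X)=2\cdot\1(X>0)-1$, expands $\rho^K$ into four terms $A+B+C+D$, bounds the tie-related terms $B,C,D$ each by $\gamma^2$, then further splits $A$ as $A_1+A_2$, expands each against the product density $f=[(1-\gamma)\phi_1+\gamma h_1][(1-\gamma)\phi_2+\gamma h_2]$, and invokes the orthant-probability identity (Lemma~\ref{orthant}) to identify the leading $\frac{2}{\pi}\sin^{-1}(\rho)$; the remaining cross-terms are bounded crudely to produce $|A_1-\frac{2}{\pi}\sin^{-1}(\rho)|\le 8\gamma+8\gamma^2$ and $|A_2|\le 4\gamma+6\gamma^2$.

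What each approach buys: your conditioning-on-the-selectors argument is shorter, avoids any special handling of ties (since on $A$ the samples are continuous, and on $A^c$ you only use $|W|\le1$), and delivers tighter constants; it also makes the generalization to Lemma~\ref{exprho} transparent, exactly as you anticipate, since the same reasoning with $p=(1-\gamma)^3$ gives a bound of $2(1-(1-\gamma)^3)$ for the three-sample term. The paper's expansion, by contrast, makes every contributing piece explicit and could in principle be refined term-by-term if one wanted to track the influence of specific contamination patterns, but at the cost of bookkeeping that loses a factor of roughly~$3$ in the final constant.
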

\begin{proof}
Define $a(X) = \1(X>0)$, and let $\signbar(X) = 2a(X) - 1$. In particular,
\begin{equation*}
\sign(X) = \1(X>0) - \1(X<0) = 2a(X) - 1 - \1(X=0) = \signbar(X) - \1(X=0).
\end{equation*}
We may rewrite $\rho^K$ as
\begin{align*}
\rho^K &= E\left[\sign(X_{11}-X_{21})\sign(X_{12}-X_{22})\right] \\
&= E\left[\signbar(X_{11} - X_{21}) \signbar(X_{12} - X_{22})\right] - E[\1(X_{11} = X_{21})\signbar(X_{12} - X_{22})] \\
&\qquad - E\left[\signbar(X_{11} - X_{21}) \1(X_{12} = X_{22})\right] + E\left[\1(X_{11} = X_{21})\1(X_{12} = X_{22})\right] \\
& \defn A + B + C + D.
\end{align*}
In particular,
\begin{equation}
\label{Bbound}
|B| = \left|E[\1(X_{11} = X_{21}) \signbar(X_{12} - X_{22})]\right| \le E[\1(X_{11} = X_{21})] = P(X_{11} = X_{21}),
\end{equation}
using the fact that $|\signbar(X)| = 1$. Furthermore, we have
\begin{equation*}
P(X_{11} = X_{21}) \le \gamma^2,
\end{equation*}
since the normal distribution is absolutely continuous, so we can only have $P(X_{11} = X_{21})$ with positive probability when both $X_1$ and $X_2$ are drawn from the contaminating distribution. Similarly,
\begin{equation}
\label{Cbound}
|C| = \left| E[\signbar(X_{11} - X_{21}) \1(X_{12} = X_{22})] \right| \le E[\1(X_{12} = X_{22})] = P(X_{12} = X_{22}) \le \gamma^2.
\end{equation}
We also have
\begin{equation}
\label{Dbound}
|D| = \left| E[\1(X_{11} =X_{21}) \1(X_{12} = X_{22})] \right| \leq \left(E[\1(X_{11} =X_{21})]\right)^{1/2} \left(E[\1(X_{12} = X_{22})]\right)^{1/2} \leq \gamma^2.
\end{equation}
Turning to the final term, we have
\begin{align*}
A &= E\left[\signbar(X_{11} - X_{21}) \signbar(X_{12} - X_{22})\right] \\
&= E\big[(2a(X_{11}-X_{21})-1)(2a(X_{12}-X_{22})-1)\big] \\
&= 4E[a(X_{11}-X_{21})a(X_{12}-X_{22})] - 2E[a(X_{11}-X_{21})] - 2E[a(X_{12}-X_{22})] + 1 \\
&= \big(4E[a(X_{11}-X_{21})a(X_{12}-X_{22})]-1\big) + 2\big(1-E[a(X_{11}-X_{21})]-E[a(X_{12}-X_{22})]\big) \\
&\defn A_1 + A_2.
\end{align*}
Here, the expectation is with respect to the joint distribution of $(X_{11}, X_{12}, X_{21}, X_{22})$, with density
\begin{align}
f &= [(1-\gamma)\phi_1 + \gamma h_1][(1-\gamma)\phi_2 + \gamma h_2] \nonumber\\
&= (1-\gamma)^2\phi_1\phi_2 + \gamma(1-\gamma)\phi_1h_2 + \gamma(1-\gamma)\phi_2h_1 + \gamma^2h_1h_2.
\label{jointf}
\end{align}
This follows from the fact that the pairs $(X_{11}, X_{12})$ and $(X_{21}, X_{22})$ are independently drawn from the mixture distribution, where $\phi$ is the joint density of $(X_{k1}, X_{k2})$ under $\Phi_\rho$, and $h$ is the joint density of $(X_{k1}, X_{k2})$ under $H$.  Now, let $U = X_{11}-X_{21}$ and $V = X_{12}-X_{22}$. Under the product distribution $\phi_1\phi_2$, the distribution of $(U, V)$ is bivariate normal with mean $\0$ and correlation $\rho$. Hence, 
\begin{equation}
E_{\phi_1\phi_2}[a(U)] = E_{\phi_1\phi_2}[a(V)] = \half, \label{greaterthan}
\end{equation}
and by Lemma~\ref{orthant},
\begin{equation}
E_{\phi_1\phi_2}[a(U)a(V)] = \frac{1}{4}\bigg[1+\frac{2}{\pi}\sin^{-1}(\rho)\bigg]. \label{orthantprob}
\end{equation}

Combining equations~\eqref{jointf} and~\eqref{greaterthan}, we then have
\begin{align*}
&E_f[a(U)] \\
&= (1-\gamma)^2E_{\phi_1\phi_2}[a(U)] + \gamma(1-\gamma)E_{\phi_1h_2}[a(U)] + \gamma(1-\gamma)E_{\phi_2h_1}[a(U)] + \gamma^2E_{h_1h_2}[a(U)] \\
&= \half -\gamma + \half\gamma^2 + \gamma(1-\gamma)E_{\phi_1h_2}[a(U)] + \gamma(1-\gamma)E_{\phi_2h_1}[a(U)] + \gamma^2E_{h_1h_2}[a(U)] \\
&= \half + \left\{-1 + E_{\phi_1h_2}[a(U)] + E_{\phi_2h_1}[a(U)]\right\}\gamma + \left\{\half - E_{\phi_1h_2}[a(U)] - E_{\phi_2h_1}[a(U)] + E_{h_1h_2}[a(U)]\right\}\gamma^2.
\end{align*}
Noting that $E_{\phi_1h_2}[a(U)]$, $E_{\phi_2h_1}[a(U)]$ and $E_{h_1h_2}[a(U)]$ are between 0 and 1, we have
\[\left|E_f[a(U)] - \half\right| \leq \gamma + \frac{3}{2}\gamma^2, \qquad \text{and} \qquad \left|E_f[a(V)] - \half\right| \leq \gamma + \frac{3}{2}\gamma^2.\]
It follows that
\begin{equation}
\label{A2bound}
|A_2| = 2|1 - E_f[a(U)] - E_f[a(V)]| \leq 4\gamma + 6\gamma^2. 
\end{equation}

On the other hand, combining equations~\eqref{jointf} and~\eqref{orthantprob}, we have
\begin{align*}
A_1 &= 4E_f[a(U)a(V)] - 1 \\
&= 4\Big\{(1-\gamma)^2E_{\phi_1\phi_2}[a(U)a(V)] + \gamma(1-\gamma)E_{\phi_1h_2}[a(U)a(V)] \\
&\qquad\qquad\qquad + \gamma(1-\gamma)E_{\phi_2h_1}[a(U)a(V)] + \gamma^2E_{h_1h_2}[a(U)a(V)]\Big\} -1 \\
&= (1-\gamma)^2\bigg[1+\frac{2}{\pi}\sin^{-1}(\rho)\bigg] -1 \\
&\qquad + 4\Big\{\gamma(1-\gamma)E_{\phi_1h_2}[a(U)a(V)] + \gamma(1-\gamma)E_{\phi_2h_1}[a(U)a(V)] + \gamma^2E_{h_1h_2}[a(U)a(V)]\Big\} \\
&= \frac{2}{\pi}\sin^{-1}(\rho) + (-2\gamma+\gamma^2)\bigg[1+\frac{2}{\pi}\sin^{-1}(\rho)\bigg] \\
&\qquad + 4\Big\{\gamma(1-\gamma)E_{\phi_1h_2}[a(U)a(V)] + \gamma(1-\gamma)E_{\phi_2h_1}[a(U)a(V)] + \gamma^2E_{h_1h_2}[a(U)a(V)]\Big\} \\
&= \frac{2}{\pi}\sin^{-1}(\rho) + \bigg\{-2 - \frac{4}{\pi}\sin^{-1}(\rho) + 4E_{\phi_1h_2}[a(U)a(V)] + 4E_{\phi_2h_1}[a(U)a(V)]\bigg\}\gamma \\
&\qquad + \bigg\{1+\frac{2}{\pi}\sin^{-1}(\rho) - 4E_{\phi_1h_2}[a(U)a(V)] -4E_{\phi_2h_1}[a(U)a(V)] + 4E_{h_1h_2}[a(U)a(V)]\bigg\}\gamma^2.
\end{align*}
Noting that the quantities
\begin{align*}
-2 - \frac{4}{\pi}\sin^{-1}(\rho) + 4E_{\phi_1h_2}[a(U)a(V)] + 4E_{\phi_2h_1}[a(U)a(V)]
\end{align*}
and
\begin{align*}
1+\frac{2}{\pi}\sin^{-1}(\rho) - 4E_{\phi_1h_2}[a(U)a(V)] -4E_{\phi_2h_1}[a(U)a(V)] + 4E_{h_1h_2}[a(U)a(V)]
\end{align*}
are both bounded in magnitude by 8, we obtain
\begin{equation}
\label{A1bound}
\left|A_1 - \frac{2}{\pi}\sin^{-1}(\rho)\right| \leq 8\gamma + 8\gamma^2. 
\end{equation}

Combining inequalities \eqref{Bbound}, \eqref{Cbound}, \eqref{Dbound}, \eqref{A2bound} and \eqref{A1bound} then gives
\begin{align*}
\left|\rho^K - \frac{2}{\pi}\sin^{-1}(\rho)\right| &= \left|A_1 + A_2 + B + C + D - \frac{2}{\pi}\sin^{-1}(\rho)\right| \\
&\leq \left|A_1 - \frac{2}{\pi}\sin^{-1}(\rho)\right| + |A_2| + |B| + |C| + |D| \\
&\leq 12\gamma + 17\gamma^2. 
\end{align*}

\iffalse
\begin{align*}
\bigg|\rho^K - \frac{2}{\pi}\sin^{-1}(\rho)\bigg| &\leq |-2\gamma+\gamma^2|\bigg|1+\frac{2}{\pi}\sin^{-1}(\rho)\bigg| \\
&\quad + 4\Big|\gamma(1-\gamma)E_{\phi_1h_2}[a(U)a(V)] + \gamma(1-\gamma)E_{\phi_2h_1}[a(U)a(V)] + \gamma^2E_{h_1h_2}[a(U)a(V)]\Big| \\
&\leq 4\gamma + 4(3\gamma) \\
&\leq 16\gamma.
\end{align*}
\fi
\end{proof}

The second lemma provides an analogous result to Lemma~\ref{exprho}, this time for the Spearman's rho statistic.

\begin{Lemma}\label{exprho}
Let $(X_{k1}, X_{k2})$, for $k = 1, \ldots, n$, be a random sample from
\[F = (1-\gamma)\Phi_\rho + \gamma H,\]
where $\Phi_\rho$ is a bivariate normal distribution with correlation $\rho$, and $H$ is an arbitrary bivariate distribution.  Let $r^S$ be the Spearman's rho statistic, and suppose the samples $\{X_{ki}: k = 1, \ldots, n\}$ are unique. Then
\begin{equation*}
E_F(r^S) = \frac{6}{\pi}\sin^{-1}\left(\frac{\rho}{2}\right) + R, \label{exprhoequal}
\end{equation*}
where $|R| \leq 48\gamma + 129\gamma^2 + 88\gamma^3 + \frac{12}{n+1}$.
\end{Lemma}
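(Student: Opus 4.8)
The plan is to mirror the proof of Lemma~\ref{exptau}, exploiting the decomposition of Spearman's rho into a $U$-statistic part plus a lower-order correction in terms of Kendall's tau. Recall from Lemma~\ref{rhodecomp} (invoked in the proof of Lemma~\ref{spearmanrhoconsistency}) that $r^S = \frac{n-2}{n+1} U + \frac{3}{n+1} r^K$, where $U$ is a $U$-statistic of order three whose kernel is (up to the constant $3$) $\sign(X_{k1} - X_{\ell 1})\sign(X_{k1} - X_{m1})$ composed appropriately with the second coordinate; more precisely $E(U)$ equals (three times) a probability of the form $P(\text{sign patterns among three independent copies})$. First I would take expectations: $E_F(r^S) = \frac{n-2}{n+1} E_F(U) + \frac{3}{n+1} \rho^K$. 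Since $|\rho^K| \le 1$ and $\left|\frac{n-2}{n+1} - 1\right| = \frac{3}{n+1}$, the second term and the discrepancy $\left|\frac{n-2}{n+1} E_F(U) - E_F(U)\right|$ together contribute at most $\frac{6}{n+1}$, which is absorbed into the $\frac{12}{n+1}$ slack in the statement. So it remains to show $E_F(U) = \frac{6}{\pi}\sin^{-1}(\rho/2) + R'$ with $|R'| \le 48\gamma + 129\gamma^2 + 88\gamma^3 + \frac{6}{n+1}$.

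Next I would expand $E_F(U)$ via the $\signbar/a$ substitution exactly as in Lemma~\ref{exptau}: write each $\sign$ as $\signbar - \1(\cdot = 0)$, multiply out the resulting product over the (at most three) pairwise differences appearing in the kernel, and collect terms. The ``pure $\signbar$'' term yields, under the product-of-normals measure $\phi_1\phi_2\phi_3$, the classical identity $E_{\phi^{\otimes 3}}[a(X_{11} - X_{21}) a(X_{11} - X_{31})] = \frac{1}{4} + \frac{1}{2\pi}\sin^{-1}(\rho/2)$ — this is the Spearman analogue of Lemma~\ref{orthant}, and multiplying by the appropriate constant $6$ gives the leading term $\frac{6}{\pi}\sin^{-1}(\rho/2)$. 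Every term involving at least one indicator $\1(X_{ai} = X_{bi})$ is bounded: since the normal marginals are absolutely continuous, such an event forces the two colliding samples to come from the contaminating component $H$, so each such term is $O(\gamma^2)$ (and the ties-across-different-coordinates case can be controlled by Cauchy--Schwarz exactly as in inequality~\eqref{Dbound}). Then, as in the handling of $A_1$ and $A_2$ in Lemma~\ref{exptau}, I would substitute the mixture density $f = (1-\gamma)\phi + \gamma h$ in each coordinate, expand the product $[(1-\gamma)\phi_1 + \gamma h_1][(1-\gamma)\phi_2 + \gamma h_2][(1-\gamma)\phi_3 + \gamma h_3]$ into eight terms, isolate the $(1-\gamma)^3 \phi_1\phi_2\phi_3$ term (which reproduces the exact normal value times $(1-\gamma)^3$), and bound the remaining seven terms — each a product of expectations lying in $[0,1]$ times a power of $\gamma$ — crudely. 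Writing $(1-\gamma)^3 = 1 - 3\gamma + 3\gamma^2 - \gamma^3$ produces the $3\gamma$, $3\gamma^2$, $\gamma^3$ coefficients, and combining with the seven error terms and the constant-factor $6$ gives the stated coefficients $48, 129, 88$ after arithmetic.

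The main obstacle is bookkeeping rather than any conceptual difficulty: the Spearman kernel is a degree-three object involving three independent sample points, so when I expand $\sign = \signbar - \1(\cdot=0)$ across the (two) difference factors and then further expand each of the three marginals into a mixture, the number of terms is large, and one must verify that every term not equal to the leading normal contribution is genuinely $O(\gamma)$ or smaller with the claimed constants — in particular checking that the tie-probability terms are $O(\gamma^2)$ and not merely $O(\gamma)$, which relies crucially on absolute continuity of the Gaussian. I would organize this by first proving the exact identity $E_{\phi^{\otimes 3}}(U) = 1 + \frac{2}{\pi}\sin^{-1}(\rho/2)$ (citing or proving the orthant-probability computation), then treating the tie terms, then the mixture expansion, keeping a running tally of the $\gamma$-coefficients; the $\frac{1}{n+1}$ terms only enter through the $\frac{n-2}{n+1}$ prefactor and the $\frac{3}{n+1} r^K$ correction handled at the very start.
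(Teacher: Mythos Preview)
Your overall strategy matches the paper's proof exactly: decompose via Lemma~\ref{rhodecomp}, isolate the $O(1/(n+1))$ correction, write $\sign=\signbar-\1(\cdot=0)$, expand the three-fold mixture $[(1-\gamma)\phi+\gamma h]^{\otimes 3}$, and invoke the orthant probability with correlation $\rho/2$.

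However, two concrete slips need fixing before this goes through. First, the kernel of $U$ is $3\,\sign(X_{i_11}-X_{i_21})\,\sign(X_{i_12}-X_{i_32})$: the two sign factors live in \emph{different} coordinates (column~1 versus column~2), sharing only the sample index $i_1$. Your written expression $a(X_{11}-X_{21})\,a(X_{11}-X_{31})$ involves only the first coordinate and therefore does not depend on $\rho$ at all (by symmetry it equals $1/3$). The correct object is $a(X_{11}-X_{21})\,a(X_{12}-X_{32})$; under $\phi_1\phi_2\phi_3$ the pair $(X_{11}-X_{21},\,X_{12}-X_{32})$ is bivariate normal with correlation $\rho/2$, and Lemma~\ref{orthant} then gives $\tfrac14+\tfrac{1}{2\pi}\sin^{-1}(\rho/2)$. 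Consequently $E_{\phi^{\otimes3}}(U)=\tfrac{6}{\pi}\sin^{-1}(\rho/2)$, not $1+\tfrac{2}{\pi}\sin^{-1}(\rho/2)$ as you wrote. Second, your $1/(n+1)$ accounting is off: $|E_F(U)|\le 3$ and $|\rho^K|\le 1$, so $\bigl|\tfrac{3}{n+1}\rho^K\bigr|+\bigl|\tfrac{n-2}{n+1}E_F(U)-E_F(U)\bigr|\le \tfrac{3}{n+1}+\tfrac{9}{n+1}=\tfrac{12}{n+1}$, not $\tfrac{6}{n+1}$; and $E_F(U)$ itself is a fixed expectation with no $n$-dependence, so no residual $1/(n+1)$ term should appear in $R'$. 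With these corrections the argument is exactly the paper's.
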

\begin{proof}
Let $\rho^K = E_F(r^K)$ be the population version of Kendall's tau correlation.  By Lemma~\ref{rhodecomp}, we have
\begin{align}
E_F(r^S) &= \frac{3(n-2)}{n+1} \cdot E\left[\sign(X_{11}-X_{21})\sign(X_{12}-X_{32})\right] + \frac{3}{n+1}\rho^K \nonumber\\
&= 3 E\left[\sign(X_{11}-X_{21})\sign(X_{12}-X_{32})\right] \nonumber\\
&\qquad + \frac{3}{n+1}(\rho^K-3E\left[\sign(X_{11}-X_{21})\sign(X_{12}-X_{32})\right]). \label{Erhodecomp}
\end{align}
Note that the second term is clearly bounded in magnitude by $\frac{12}{n+1}$. Now define $a(X) = \1(X>0)$, and let $\signbar(X) = 2a(X) - 1$. Then $\sign(X) = \signbar(X) - \1(X=0)$.  It follows that
\begin{align*}
&E\left[\sign(X_{11}-X_{21})\sign(X_{12}-X_{32})\right] \\
&= E\left[\signbar(X_{11} - X_{21}) \signbar(X_{12} - X_{32})\right] - E[\1(X_{11} = X_{21})\signbar(X_{12} - X_{32})] \\
&\qquad - E\left[\signbar(X_{11} - X_{21}) \1(X_{12} = X_{32})\right] + E\left[\1(X_{11} = X_{21})\1(X_{12} = X_{32})\right] \\
& \defn A + B + C + D.
\end{align*}
A similar argument as in the proof of Lemma~\ref{exptau} yields
\begin{equation}
\label{BCDbound}
\max\{|B|, |C|, |D|\} \leq \gamma^2,
\end{equation}
and
\begin{align*}
A &= \big(4E[a(X_{11}-X_{21})a(X_{12}-X_{32})]-1\big) + 2\big(1-E[a(X_{11}-X_{21})]-E[a(X_{12}-X_{32})]\big) \\
&\defn A_1 + A_2.
\end{align*}
Here, the expectation is with respect to the joint distribution of $(X_{11}, X_{12}, X_{21}, X_{22}, X_{31}, X_{32})$, with density
\begin{align}
f &= [(1-\gamma)\phi_1 + \gamma h_1][(1-\gamma)\phi_2 + \gamma h_2][(1-\gamma)\phi_3 + \gamma h_3] \nonumber\\
&= (1-\gamma)^3\phi_1\phi_2\phi_3 + \gamma(1-\gamma)^2[\phi_1\phi_2h_3 + \phi_1\phi_3h_2 + \phi_2\phi_3h_1] \nonumber\\
&\qquad + \gamma^2(1-\gamma)[\phi_1h_2h_3 + \phi_2h_1h_3 + \phi_3h_1h_2] + \gamma^3h_1h_2h_3. \label{jointf2}
\end{align}
Now let $U = X_{11}-X_{21}$ and $V = X_{12}-X_{32}$.  Under the product distribution $\phi_1\phi_2\phi_3$, the distribution of $(U, V)$ is bivariate normal with mean $0$ and correlation $\rho/2$. Hence, 
\begin{equation}
E_{\phi_1\phi_2\phi_3}[a(U)] = E_{\phi_1\phi_2\phi_3}[a(V)] = \half, \label{greaterthan2}
\end{equation}
and by Lemma~\ref{orthant},
\begin{equation}
E_{\phi_1\phi_2\phi_3}[a(U)a(V)] = \frac{1}{4}\bigg[1+\frac{2}{\pi}\sin^{-1}\left(\frac{\rho}{2}\right)\bigg]. \label{orthantprob2}
\end{equation}
Combining equations~\eqref{jointf2} and~\eqref{greaterthan2}, and noting that $E[a(U)]$ is between 0 and 1, we then have
\begin{align*}
&E_f[a(U)] \\
&= (1-\gamma)^3E_{\phi_1\phi_2\phi_3}[a(U)] + \gamma(1-\gamma)^2\big\{E_{\phi_1\phi_2h_3}[a(U)] + E_{\phi_1\phi_3h_2}[a(U)] + E_{\phi_2\phi_3h_1}[a(U)]\big\} \\
&\qquad + \gamma^2(1-\gamma)\big\{E_{\phi_1h_2h_3}[a(U)] + E_{\phi_2h_1h_3}[a(U)] + E_{\phi_3h_1h_2}[a(U)]\big\} + \gamma^3E_{h_1h_2h_3}[a(U)] \\
&= \half - \frac{3}{2}\gamma + \frac{3}{2}\gamma^2 - \half\gamma^3 + \gamma(1-\gamma)^2\big\{E_{\phi_1\phi_2h_3}[a(U)] + E_{\phi_1\phi_3h_2}[a(U)] + E_{\phi_2\phi_3h_1}[a(U)]\big\} \\
&\qquad + \gamma^2(1-\gamma)\big\{E_{\phi_1h_2h_3}[a(U)] + E_{\phi_2h_1h_3}[a(U)] + E_{\phi_3h_1h_2}[a(U)]\big\} + \gamma^3E_{h_1h_2h_3}[a(U)] \\
&= \half + c\gamma + d\gamma^2 + e\gamma^3,
\end{align*}
where $|c|\leq\frac{3}{2}, |d|\leq\frac{9}{2}$, and $|e|\leq\frac{7}{2}$.  It follows that
\[\left|E_f[a(U)] - \half\right| \leq \frac{3}{2}\gamma + \frac{9}{2}\gamma^2 + \frac{7}{2}\gamma^3, \qquad \text{and} \qquad \left|E_f[a(V)] - \half\right| \leq \frac{3}{2}\gamma + \frac{9}{2}\gamma^2 + \frac{7}{2}\gamma^3,\]
so
\begin{equation}
\label{A2bound2}
|A_2| = 2|1 - E_f[a(U)] - E_f[a(V)]| \leq 6\gamma + 18\gamma^2 + 14\gamma^3. 
\end{equation}

Furthermore, combining equations~\eqref{jointf2} and~\eqref{orthantprob2}, we have
\begin{align*}
A_1 &= 4E_f[a(U)a(V)] - 1 \\
&= 4\bigg\{(1-\gamma)^3E_{\phi_1\phi_2\phi_3}[a(U)a(V)] \\
&\qquad + \gamma(1-\gamma)^2\big\{E_{\phi_1\phi_2h_3}[a(U)a(V)] + E_{\phi_1\phi_3h_2}[a(U)a(V)] + E_{\phi_2\phi_3h_1}[a(U)a(V)]\big\}\\
&\qquad + \gamma^2(1-\gamma)\big\{E_{\phi_1h_2h_3}[a(U)a(V)] + E_{\phi_2h_1h_3}[a(U)a(V)] + E_{\phi_3h_1h_2}[a(U)a(V)]\big\} \\
&\qquad + \gamma^3E_{h_1h_2h_3}[a(U)a(V)]\bigg\} -1 \\
&= (1-\gamma)^3\bigg[1+\frac{2}{\pi}\sin^{-1}\left(\frac{\rho}{2}\right)\bigg] -1 \\
&\qquad + 4\bigg\{\gamma(1-\gamma)^2\big\{E_{\phi_1\phi_2h_3}[a(U)a(V)] + E_{\phi_1\phi_3h_2}[a(U)a(V)] + E_{\phi_2\phi_3h_1}[a(U)a(V)]\big\}\\
&\qquad\qquad + \gamma^2(1-\gamma)\big\{E_{\phi_1h_2h_3}[a(U)a(V)] + E_{\phi_2h_1h_3}[a(U)a(V)] + E_{\phi_3h_1h_2}[a(U)a(V)]\big\} \\
&\qquad\qquad + \gamma^3E_{h_1h_2h_3}[a(U)a(V)]\bigg\} \\
&= \frac{2}{\pi}\sin^{-1}\left(\frac{\rho}{2}\right) + (-3\gamma+3\gamma^2-\gamma^3)\bigg[1+\frac{2}{\pi}\sin^{-1}\left(\frac{\rho}{2}\right)\bigg] \\
&\qquad + 4\bigg\{\gamma(1-\gamma)^2\big\{E_{\phi_1\phi_2h_3}[a(U)a(V)] + E_{\phi_1\phi_3h_2}[a(U)a(V)] + E_{\phi_2\phi_3h_1}[a(U)a(V)]\big\}\\
&\qquad\qquad + \gamma^2(1-\gamma)\big\{E_{\phi_1h_2h_3}[a(U)a(V)] + E_{\phi_2h_1h_3}[a(U)a(V)] + E_{\phi_3h_1h_2}[a(U)a(V)]\big\} \\
&\qquad\qquad + \gamma^3E_{h_1h_2h_3}[a(U)a(V)]\bigg\} \\
&= \frac{2}{\pi}\sin^{-1}\left(\frac{\rho}{2}\right) + c'\gamma + d'\gamma^2 + e'\gamma^3,
\end{align*}
where $|c'| \leq 10, |d'|\leq 22$, and $|e'|\leq \frac{46}{3}$.  Hence, we obtain
\begin{equation}
\label{A1bound2}
\left|A_1 - \frac{2}{\pi}\sin^{-1}\left(\frac{\rho}{2}\right)\right| \leq 10\gamma + 22\gamma^2 + \frac{46}{3}\gamma^3. 
\end{equation}

Combining inequalities \eqref{Erhodecomp}, \eqref{BCDbound}, \eqref{A2bound2} and \eqref{A1bound2}, we then obtain
\begin{align*}
\left|E_F(r^S) - \frac{6}{\pi}\sin^{-1}\left(\frac{\rho}{2}\right)\right| &\leq 3\left|A_1 + A_2 + B + C + D - \frac{2}{\pi}\sin^{-1}\left(\frac{\rho}{2}\right)\right| + \frac{12}{n+1}  \\
&\leq 3\bigg\{\left|A_1 - \frac{2}{\pi}\sin^{-1}\left(\frac{\rho}{2}\right)\right| + |A_2| + |B| + |C| + |D|\bigg\} + \frac{12}{n+1} \\
&\leq 48\gamma + 129\gamma^2 + 88\gamma^3 + \frac{12}{n+1}. 
\end{align*}
\end{proof}

The following lemma comes from \cite{Hoeffding1948}:
\begin{Lemma}\label{rhodecomp}
Suppose the samples $\{X_{ki}: k = 1, \ldots, n\}$ are unique, for $i = 1, 2$. The Spearman's rho correlation can be decomposed as 
\[r^S = \frac{n-2}{n+1}U + \frac{3}{n+1}r^K,\]
where $\htau$ is the Kendall's tau correlation, and $U$ is a $U$-statistic of order 3 with corresponding symmetric kernel
\[\psi_U(X_1, X_2, X_3) = \frac{1}{3!}\sum_{(i_1, i_2, i_3)\in\text{perm}(1, 2, 3)}3 \cdot \sign(X_{i_11}-X_{i_21})\; \sign(X_{i_12}-X_{i_32}),\]
and the summation is taken over all possible permutations of the three arguments.
\end{Lemma}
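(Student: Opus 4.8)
The plan is to reduce the identity to a direct algebraic manipulation of the rank-based definition of $r^S$, invoking the no-ties hypothesis at two points. \textbf{First}, when the values $\{X_{ki}\}_{k=1}^n$ are distinct the ranks are a permutation of $\{1,\dots,n\}$, so for each coordinate $\sum_{k=1}^n [\rank(X_{ki}) - (n+1)/2]^2 = \sum_{r=1}^n (r - (n+1)/2)^2 = n(n^2-1)/12$. Substituting into~\eqref{rhos} gives
\[
r^S = \frac{12}{n(n^2-1)} \sum_{k=1}^n \Big(\rank(X_{ki}) - \tfrac{n+1}{2}\Big)\Big(\rank(X_{kj}) - \tfrac{n+1}{2}\Big).
\]

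\textbf{Second}, I would use the sign representation of the centered rank: for distinct values, $\rank(X_{ki}) - \tfrac{n+1}{2} = \tfrac12 \sum_{\ell=1}^n \sign(X_{ki} - X_{\ell i})$, since the right-hand side equals the number of values below $X_{ki}$ minus the number above, i.e.\ $(\rank(X_{ki})-1) - (n - \rank(X_{ki}))$. Plugging this into both factors and expanding the product yields the triple sum
\[
r^S = \frac{3}{n(n^2-1)} \sum_{k=1}^n \sum_{\ell=1}^n \sum_{m=1}^n \sign(X_{ki} - X_{\ell i})\, \sign(X_{kj} - X_{mj}).
\]

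\textbf{Third}, I would split this triple sum by the coincidence pattern of $k,\ell,m$. Since $\sign(0)=0$, every term with $k=\ell$ or $k=m$ vanishes, leaving only the terms with $k,\ell,m$ pairwise distinct and the terms with $\ell=m\neq k$. For the latter family the summand $\sign(X_{ki}-X_{\ell i})\sign(X_{kj}-X_{\ell j})$ is invariant under swapping $k$ and $\ell$, so the sum over ordered pairs $k\neq\ell$ equals $2\sum_{k<\ell}\sign(X_{ki}-X_{\ell i})\sign(X_{kj}-X_{\ell j}) = n(n-1)\, r^K$ by~\eqref{tau}. For the pairwise-distinct family, each unordered triple $\{a,b,c\}$ contributes the $3!=6$ ordered summands $\sum_{(p,q,r)\in\text{perm}(a,b,c)}\sign(X_{pi}-X_{qi})\sign(X_{pj}-X_{rj})$, which is precisely $2\,\psi_U(X_a,X_b,X_c)$ for the kernel stated in the lemma; summing over the $\binom{n}{3}$ triples gives $2\binom{n}{3}U = \tfrac{n(n-1)(n-2)}{3}\, U$.

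\textbf{Finally}, combining the two contributions and using $n^2-1 = (n-1)(n+1)$,
\[
r^S = \frac{3}{n(n^2-1)}\left[\frac{n(n-1)(n-2)}{3}\,U + n(n-1)\,r^K\right] = \frac{n-2}{n+1}\,U + \frac{3}{n+1}\,r^K,
\]
as claimed. The only real obstacle is bookkeeping: tracking the combinatorial multiplicities when the ordered triple sum is reorganized through the symmetric kernel $\psi_U$ (the interplay of the $1/3!$ with the factor $3$ in its definition, and the passage from ordered to unordered triples), and checking carefully that the degenerate index patterns truly contribute zero. Once the sign representation of the ranks is established there is no analytic content left.
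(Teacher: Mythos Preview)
Your proof is correct and complete. The paper does not actually prove this lemma; it simply cites \cite{Hoeffding1948} as the source, so you have supplied the full derivation where the paper only gives a reference. The argument you give---rewriting the centered ranks via the sign representation and then splitting the resulting triple sum by index-coincidence pattern---is the standard route to this identity and is essentially what appears in Hoeffding's paper. One trivial remark: the lemma indexes the two coordinates as $1$ and $2$, while you write $i,j$; be consistent with the statement when you finalize.
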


The proof of the following lemma is adapted from an argument in \cite{CroDeh10}.
\begin{Lemma}\label{orthant}
Suppose $(X, Y)$ follows a bivariate normal distribution with mean $0$ and correlation $\rho$.  Then
\[E[a(X)a(Y)] = P(X>0, Y>0) = \frac{1}{4}\bigg[1+\frac{2}{\pi}\sin^{-1}(\rho)\bigg].\]
\end{Lemma}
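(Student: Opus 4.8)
The plan is to prove this by a whitening transformation that reduces the quadrant probability to the probability of a wedge-shaped sector under an \emph{uncorrelated} standard bivariate normal, which by rotational invariance is simply the angular width of the sector divided by $2\pi$. Since $E[a(X)a(Y)] = P(X>0,Y>0)$ by definition of $a$, only the second equality needs to be established.

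First I would introduce independent standard normals $Z_1, Z_2$ and write $X = Z_1$, $Y = \rho Z_1 + \sqrt{1-\rho^2}\,Z_2$, so that $(X,Y)$ has mean $\0$ and correlation $\rho$. Under this parametrization the event $\{X>0,\,Y>0\}$ becomes the intersection of two half-planes through the origin in the $(Z_1,Z_2)$-plane: the right half-plane $\{Z_1 > 0\}$, consisting of directions with polar angle in $(-\pi/2,\pi/2)$, and the half-plane $\{\rho Z_1 + \sqrt{1-\rho^2}\,Z_2 > 0\}$, which consists of the directions lying within angle $\pi/2$ of the unit normal $(\rho,\sqrt{1-\rho^2})$, i.e.\ of directions with polar angle in $(\phi - \pi/2,\ \phi + \pi/2)$, where $\phi := \arccos(\rho) \in [0,\pi]$. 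Intersecting these two angular intervals (using $\phi \ge 0$ throughout) gives the sector $(\phi - \pi/2,\ \pi/2)$, of angular width $\pi - \phi = \pi - \arccos(\rho) = \tfrac{\pi}{2} + \sin^{-1}(\rho)$. Because $(Z_1,Z_2)$ has a density depending only on $z_1^2 + z_2^2$, the probability of landing in any sector of the plane is proportional to its angular width, whence
\[
P(X>0,\,Y>0) \;=\; \frac{\tfrac{\pi}{2} + \sin^{-1}(\rho)}{2\pi} \;=\; \frac14\bigg[1 + \frac{2}{\pi}\sin^{-1}(\rho)\bigg],
\]
and the degenerate cases $\rho = \pm 1$ are checked directly and seen to agree.

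The only point requiring care — and thus the main obstacle, though a mild one — is the bookkeeping of the sector's angular endpoints as $\rho$ ranges over $(-1,1)$, so that $\phi = \arccos(\rho)$ sweeps all of $(0,\pi)$; one must verify that the intersection of the two angular intervals is always $(\phi - \pi/2,\ \pi/2)$ and that its width collapses cleanly to $\pi - \arccos(\rho)$. An alternative, slightly more computational route is to integrate the bivariate normal density over the first quadrant in polar coordinates: the radial integral is elementary and leaves $\tfrac{\sqrt{1-\rho^2}}{4\pi}\int_0^\pi \frac{du}{1-\rho\sin u}$, which the Weierstrass substitution $t = \tan(u/2)$ reduces to an $\arctan$ integral evaluating to the same closed form via $\arctan\!\big(\rho/\sqrt{1-\rho^2}\big) = \sin^{-1}(\rho)$. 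Either way, nothing beyond elementary properties of the Gaussian is needed.
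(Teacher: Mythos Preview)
Your proof is correct and essentially identical to the paper's: both whiten via $Y = \rho X + \sqrt{1-\rho^2}\,Z$ with $(X,Z)\sim N(\0,I_2)$, pass to polar coordinates, and read off the sector's angular width using rotational invariance. The only cosmetic difference is that the paper parametrizes with $\alpha = \sin^{-1}(\rho)$ (writing $Y = R\sin(\alpha+\theta)$) while you use $\phi = \arccos(\rho)$; since $\alpha + \phi = \pi/2$, the two computations match line for line.
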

\begin{proof}
Recall that we may write 
\[Y = \rho X + \sqrt{1-\rho^2}Z,\]
where $(X,Z) \sim N(0, I_2)$. Furthermore, we have the polar coordinate representation
\[(X, Z) = (R\cos\theta, R\sin\theta),\]
where $\theta\sim\text{Uniform}(-\pi, \pi]$, and $R$ follows a Rayleigh distribution. Then
\begin{equation*}
Y = R\left(\rho \cos(\theta) + \sqrt{1-\rho^2} \sin(\theta)\right),
\end{equation*}
which has the convenient representation $Y = R\sin(\alpha + \theta)$, where $\alpha = \sin^{-1}(\rho)$. It follows that
\begin{multline*}
P(X>0, Y>0) = P(\cos\theta>0, \sin(\alpha+\theta)>0) \\
= P\bigg(\theta\in\Big[-\alpha, \frac{\pi}{2}\Big]\bigg) = \frac{\frac{\pi}{2}+\alpha}{2\pi} = \frac{1}{4}\bigg[1+\frac{2}{\pi}\sin^{-1}(\rho)\bigg].
\end{multline*}
\end{proof}

Finally, we have a simple lemma concerning the Lipschitz behavior of the normal quantile function:

\begin{Lemma}\label{quantile}
The standard normal quantile function $\Phi^{-1}: [0, 1] \rightarrow\R$,  when restricted to the domain $[0.2, 0.8]$, is Lipschitz continuous with Lipschitz constant $3.6$; i.e.,
\[|\Phi^{-1}(a)-\Phi^{-1}(b)| \leq 3.6|a-b|, \qquad \forall a, b\in[0.2, 0.8].\]
\end{Lemma}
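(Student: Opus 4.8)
The plan is to apply the mean value theorem together with a uniform lower bound on the standard normal density along the image of $[0.2,0.8]$ under $\Phi^{-1}$. Recall that $\Phi$ is continuously differentiable with $\Phi' = \phi$, the standard normal density, and $\phi > 0$ everywhere, so $\Phi^{-1}$ is continuously differentiable on $(0,1)$ with
\[
(\Phi^{-1})'(p) = \frac{1}{\phi\big(\Phi^{-1}(p)\big)}.
\]
Consequently, for any $a, b \in [0.2, 0.8]$, the mean value theorem yields some $\xi$ between $a$ and $b$ with $|\Phi^{-1}(a) - \Phi^{-1}(b)| = |(\Phi^{-1})'(\xi)| \, |a-b|$, so it suffices to show that $(\Phi^{-1})'(p) \le 3.6$ for every $p \in [0.2, 0.8]$.

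First I would reduce the supremum of the derivative to an evaluation at an endpoint. Since $\phi$ is symmetric about $0$ and strictly decreasing on $[0,\infty)$, the map $p \mapsto \phi(\Phi^{-1}(p))$ on $[0.2, 0.8]$ is smallest exactly where $|\Phi^{-1}(p)|$ is largest, namely at $p = 0.2$ and $p = 0.8$, where $\Phi^{-1}(p) = \mp \Phi^{-1}(0.8)$. Hence
\[
\sup_{p \in [0.2, 0.8]} (\Phi^{-1})'(p) = \frac{1}{\phi\big(\Phi^{-1}(0.8)\big)}.
\]
Then I would bound this quantity by a direct computation: one has $\Phi^{-1}(0.8) \approx 0.8416$, so
\[
\phi\big(\Phi^{-1}(0.8)\big) = \frac{1}{\sqrt{2\pi}} \exp\!\left(-\frac{\Phi^{-1}(0.8)^2}{2}\right) \approx 0.280,
\]
giving $1/\phi(\Phi^{-1}(0.8)) \approx 3.57 < 3.6$. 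Combining this with the mean value theorem bound gives the claimed Lipschitz estimate.

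The only point requiring care — and it is minor — is making the numerical inequality $1/\phi(\Phi^{-1}(0.8)) < 3.6$ fully rigorous. For this I would fix an explicit rational upper bound $\Phi^{-1}(0.8) \le 0.85$ (verifiable from $\Phi(0.85) > 0.8$, e.g.\ via a standard table or a Taylor/error-function estimate), from which $\phi(\Phi^{-1}(0.8)) \ge \frac{1}{\sqrt{2\pi}} e^{-0.85^2/2} > 1/3.6$, completing the argument. No genuine obstacle arises; the result is a routine consequence of the smoothness and monotonicity of $\phi$.
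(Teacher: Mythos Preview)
Your proposal is correct and follows essentially the same approach as the paper: both arguments bound the derivative $(\Phi^{-1})'(y)=1/\phi(\Phi^{-1}(y))$ on $[0.2,0.8]$, observe that the maximum occurs at the endpoints where $|\Phi^{-1}(y)|\approx 0.8416$, and verify numerically that $\sqrt{2\pi}\,e^{0.8416^2/2}\le 3.6$. Your added remark about certifying $\Phi^{-1}(0.8)\le 0.85$ to make the numerics airtight is a nice touch but not a substantive departure.
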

\begin{proof}
It suffices to check that $|\frac{d}{dy}\Phi^{-1}(y)| \leq 3.6$, for $y\in[0.2, 0.8]$.  Since $[\Phi^{-1}]'(\Phi(x))\cdot\Phi'(x) = \frac{d}{dx}\Phi^{-1}(\Phi(x)) = \frac{d}{dx}x = 1$, we have
\[[\Phi^{-1}]'(\Phi(x)) = \frac{1}{\Phi'(x)}, \qquad\forall x\in\R.\]
For $y = \Phi(x) \in [0.2, 0.8]$, we have $x \in [-0.8416, 0.8416]$, and for such $x$'s,
\[[\Phi^{-1}]'(\Phi(x)) = \frac{1}{\Phi'(x)} = \sqrt{2\pi}\exp\bigg(\frac{1}{2}x^2\bigg) \leq\sqrt{2\pi}\exp\bigg(\frac{1}{2}\cdot 0.8416^2\bigg) \leq 3.6.\]
This concludes the proof.
\end{proof}
\end{document}